\journalname{...}
\date{ \phantom{b} \vspace{45mm}\phantom{e}}
\newcommand{\settitle}{\@maketitle}
\newcommand\bfB{{\mathbf B}}
\newcommand\bfC{{\mathbf C}}
\newcommand\bfD{{\mathbf D}}
\newcommand\bfG{{\mathbf G}}
\newcommand\bfK{{\mathbf K}}
\newcommand\bfM{{\mathbf M}}
\newcommand\bfV{{\mathbf V}}
\newcommand\bfphi{{\boldsymbol \vphi}}
\newcommand\bfpsi{{\boldsymbol \psi}}
\newcommand\bfrho{{\boldsymbol \rho}}
\newcommand\bfsigma{{\boldsymbol \sigma}}
\newcommand\calD{{\mathcal D}}
\newcommand\calE{{\mathcal E}}
\newcommand\calF{{\mathcal F}}
\newcommand\calH{{\mathcal H}}
\newcommand\calS{{\mathcal S}}
\newcommand\calT{{\mathcal T}}
\renewcommand{\d}{\textrm{d}}
\DeclareMathOperator{\diff}{\frac{\textrm{d}}{\textrm{d}t}}
\newcommand{\eps}{\varepsilon}
\newcommand{\Ga}{\Gamma}
\newcommand{\half}{{\textstyle \frac12}}
\newcommand{\Half}{\frac12}
\newcommand{\iu}{\textnormal{i}}
\def \e {\textnormal{e}}
\newcommand{\inv}{^{-1}}
\newcommand{\N}{\mathbb{N}}
\newcommand{\nb}{\nabla}
\newcommand{\Om}{\Omega}
\newcommand{\pa}{\partial}
\newcommand{\R}{\mathbb{R}}
\DeclareMathOperator{\re}{\mathrm{Re}}
\def \to {\rightarrow}
\newcommand{\vphi}{\varphi}
\newcommand\andquad{\qquad \hbox{ and } \qquad }
\newcommand\qin{\qquad \hbox{ in }}
\newcommand\bfE{\boldsymbol{E}}
\newcommand\bfg{\boldsymbol{g}}
\newcommand\bfj{\boldsymbol{j}}
\newcommand\bfJ{\boldsymbol{J}}
\newcommand\bfH{\boldsymbol{H}}
\DeclareMathOperator{\curl}{curl}
\DeclareMathOperator{\diverg}{div}
\renewcommand{\div}{\diverg}
\newcommand{\dt}{{\mathit{\Delta}t}}
\newcommand{\E}{E}
\newcommand{\Fb}{\calF_h^{\textnormal{bnd}}}
\newcommand{\Fi}{\calF_h^{\textnormal{int}}}
\DeclareMathOperator{\ga}{\gamma}
\renewcommand{\H}{H}
\newcommand{\Hdiv}{\mathcal{H}_\Gamma}
\newcommand{\Ht}{H_\times} 
\newcommand{\J}{J}
\newcommand{\la}{[}
\newcommand{\TG}{\Ga} 
\renewcommand{\P}{I_h}
\newcommand{\ra}{]}
\newcommand{\vpp}{\vect{\vphi}{\psi}} 
\newcommand{\av}[1]{\{\!\!\{#1\}\!\!\}}
\newcommand{\jp}[1]{[\![#1]\!]}
\newcommand{\m}[1]{m(#1)}
\newcommand{\vect}[2]{\binom{#1}{#2}}
\newcommand{\C}{\mu\inv}
\newcommand{\QED}{\hfill $\square$}
\begin{document}
	
\title{Stable and convergent fully discrete interior--exterior coupling of Maxwell's equations}

\titlerunning{Interior--exterior coupling for Maxwell's equations}        

\author{Bal\'{a}zs Kov\'{a}cs \and
	Christian Lubich 
}


\institute{B. Kov\'{a}cs \at
	Mathematisches Institut, University of T\"{u}bingen,\\
	Auf der Morgenstelle 10, 72076 T\"{u}bingen, Germany \\
	\email{kovacs@na.uni-tuebingen.de}
	\and
	Ch. Lubich \at
	Mathematisches Institut, University of T\"{u}bingen,\\
	Auf der Morgenstelle 10, 72076 T\"{u}bingen, Germany \\
	\email{lubich@na.uni-tuebingen.de}
}

\date{}

\maketitle

\begin{abstract}
	Maxwell's equations are considered with transparent boundary conditions, for initial conditions and inhomogeneity having support in a bounded, not necessarily convex three-dimensional domain or in a collection of such domains.  The numerical method only involves the interior domain and its boundary. The transparent boundary conditions are imposed via a time-dependent boundary integral operator that is shown to satisfy a coercivity property.  The stability of the numerical method relies on this coercivity and on an anti-symmetric structure of the discretized equations that is inherited from a weak first-order formulation of the continuous equations. The method proposed here uses  a discontinuous Galerkin method and the leapfrog scheme in the interior and is coupled to boundary elements and convolution quadrature on the boundary. The method is explicit in the interior and implicit on the boundary. Stability and convergence of the spatial semidiscretization  are proven, and with a computationally simple stabilization term, this is also shown for the full discretization.
	\keywords{transparent boundary conditions \and Calderon operator \and discontinuous Galerkin \and boundary elements \and leapfrog scheme \and convolution quadrature}
	\subclass{35Q61 \and 65M60 \and 65M38 \and 65M12 \and 65R20}
\end{abstract}

\section{Introduction}

Maxwell's equations on the whole three-dimensional space are considered with initial conditions and inhomogeneity having support in a bounded domain that is not required to be convex  (or in a finite collection of such domains). The study of such problems leads to transparent boundary conditions, which yield the restriction of the solution to the domain. Such boundary conditions are nonlocal in space and time, for both acoustic wave equations and Maxwell's equations. There is a vast literature to tackle this problem in general for wave equations: fast algorithms for exact, nonlocal boundary conditions on a ball \cite{GroteKeller,Hagstrom}, local absorbing boundary conditions \cite{EngquistMajda,HagstromMarOrGivoli},  perfectly matched layers, which were originally considered for electromagnetism in \cite{Berenger}, and numerical coupling with time-dependent boundary integral operators \cite{abboud2011coupling,leapfrog}. All the above approaches, except the last one, are inadequate for non-convex domains. The local methods fail because waves may leave and re-enter a non-convex domain. Inclusion of a non-convex domain in a larger convex domain is computationally undesirable in situations such as a cavity or an antenna-like structure or a far-spread non-connected collection of small domains.  

The main objective of the present work is to transfer the programme of \cite{leapfrog} from acoustic wave equations to Maxwell's equations: to propose and analyze a provably stable and convergent fully discrete numerical method that couples discretizations in the interior and on the boundary, without requiring convexity of the domain. Like Abboud {\it et al.} \cite{abboud2011coupling} (and later also \cite{leapfrog}) for the acoustic wave equation, we start from a symmetrized weak first-order formulation of Maxwell's equations. In the interior this is discretized by a discontinuous Galerkin (dG) method in space \cite{DiPietroErn,HesthavenWarburton,HochbruckSturm} together with the explicit leapfrog scheme in time \cite{HairerLubichWanner}. The boundary integral terms are discretized by standard boundary element methods in space and by convolution quadrature (CQ) in time \cite{LubichCQ,Lubich-multistep}. This yields a coupled method that is explicit in the interior and implicit on the boundary. The choice of a CQ time discretization of the boundary integral operators is essential for our analysis, and to a lesser extent also the choice of the leapfrog scheme in the interior. However, our approach is not specific to the chosen space discretizations which could, in particular, be replaced by conformal edge elements \cite{Hiptmair-acta}.

While the general approach of this paper is clearly based on \cite{leapfrog}, it should be emphasized that the appropriate boundary integral formulation requires a careful study of the time-harmonic Maxwell's equation. This is based on \cite{BuffaCostabelSheen_traces,BuffaHiptmair,BuffaHiptmairvonPetersdorffSchwab,Costabel2004,BallaniBanjaiSauterVeit},
with  special attention to the appropriate trace space on the boundary and to the corresponding duality. Due to the analogue of Green's formula for Maxwell's equations, the duality naturally turns out to be an anti-symmetric pairing. 

The Calderon operator for Maxwell's equation, which arises in the boundary integral equation formulation of the transparent boundary conditions, differs from the acoustic case to a large extent, and therefore the study of its coercivity property is an important and nontrivial point. Similarly to the acoustic case, the continuous-time and discrete-time coercivity is obtained from the Laplace-domain coercivity  using an operator-valued version, given in \cite{leapfrog}, of the classical Herglotz theorem \cite{Herglotz}. Both the second and first order formulation of Maxwell's equations are used.

The spatial semi-discretization of the symmetrized weak first-order formulation of Maxwell's equations has formally the same matrix--vector formulation as for the acoustic wave equation studied in \cite{leapfrog}, with the same coercivity property of the Calderon operator. Because of this structural similarity, the stability results of \cite{leapfrog}, which are shown using the matrix--vector setting, remain valid for the Maxwell case without any modification. On the other hand, their translation to the functional analytic setting differs to a great extent. Therefore further care is required in the consistency analysis.

In Section~\ref{section: recap helmholtz}  we recapitulate the basic theory for Maxwell's equation in the Laplace domain. Based on Buffa and Hiptmair \cite{BuffaHiptmair}, and further on \cite{BuffaHiptmairvonPetersdorffSchwab,BallaniBanjaiSauterVeit}, we describe the right boundary space, which allows for a rigorous boundary integral formulation for Maxwell's equations. Then the boundary integral operators are obtained in a usual way from the single and double layer potentials.

In Section \ref{section: calderon} we prove the crucial technical result of the present work, a coercivity property of the Calderon operator for Maxwell's equation in the Laplace domain. This property translates to the continuous-time Maxwell's equations later, in Section~\ref{subsection: Calderon op for Maxwell's eqn}, via an operator-valued Herglotz theorem.

In Section~\ref{section: boundary int form} we study the interior--exterior coupling of Maxwell's equations, resulting in an interior problem coupled to an equation on the boundary with the Calderon operator. We derive a first order symmetric weak formulation, which is the Maxwell analogue of the formulation of \cite{abboud2011coupling} for the acoustic wave equation. Together with the continuous-time version of the coercivity property of the Calderon operator, this formulation allows us to derive an energy estimate. Later on this analysis is transfered to the semi-discrete and fully discrete settings.

Section~\ref{section: discretization} presents the details of the discretization methods: In space we use discontinuous Galerkin finite elements with centered fluxes in the domain \cite{DiPietroErn,HesthavenWarburton}, coupled to continuous linear boundary elements on the surface.
Time discretization is done by the leapfrog scheme in the interior domain, while on the boundary we use convolution quadrature based on the second-order backward differentiation formula. An extra term stabilizes the coupling, just as for the acoustic wave equation \cite{leapfrog}.
The matrix--vector formulation of the semidiscrete problem has the same anti-symmetric structure and the same coercivity property as for the acoustic wave equation, and therefore the stability results shown in \cite{leapfrog} can be reused here.

In Sections~\ref{section: semidiscrete results} and \ref{section: fully discrete results} we revise the parts of the results and proofs of \cite{leapfrog} where they differ from the acoustic case, which is mainly in the estimate of the consistency error. Finally, we arrive at the convergence error bounds for the semi- and full discretizations.

\smallskip
To our knowledge, the proposed numerical discretizations in this paper are the first provably stable and convergent semi- and full discretizations to interior--exterior coupling of Maxwell's equations. We believe that the presented analysis and the techniques, which we share with \cite{leapfrog}, can be extended further: to other discretization techniques for the domain, such as edge element methods \cite{Hiptmair-acta}, higher order discontinuous Galerkin methods, and different time discretizations in the domain, together with higher order  Runge--Kutta based convolution quadratures on the boundary \cite{BanjaiLubichMelenk}. 

For ease of presentation we consider only constant permeability and permittivity. However, it is only important that the permeability and permittivity are constant in the exterior domain and in a neighbourhood of the boundary. In the interior these coefficients may be space-dependent and discontinuous. In the latter case the equations can be discretized in space with the dG method as described in \cite{HochbruckSturm}.

In this paper we focus on the appropriate boundary integral formulation and on the numerical analysis of the proposed numerical methods. Numerical experiments are intended to be presented in subsequent work.

\smallskip
Concerning notation, we use the convention that vectors in $\R^3$ are denoted by italic letters (such as $u, E, H,\dots$), whereas the corresponding boldface letters are used for finite element nodal vectors in $\R^N$, where $N$ is the (large) number of discretization nodes. Hence, any boldface letters appearing in this paper refer to the matrix--vector formulation of spatially discretized equations. Functions defined in the domain $\Omega$ are denoted by letters from the Roman alphabet,  while functions defined on the boundary $\Gamma$ are denoted by Greek letters.

\section{Recap: the time-harmonic Maxwell's equation and its boundary integral operators}
\label{section: recap helmholtz}
\subsection{Preliminaries and notation}

Let us consider the \emph{time-harmonic Maxwell's equation}, obtained as the Laplace transform of the second order Maxwell's equation (with constant permeability $\mu$ and permittivity $\eps$):
\begin{equation}
\label{eq: time-harmonic Maxwell }
\begin{aligned}
\eps\mu s^2u + \curl  \curl u =&\ 0 \qin \R^3 \setminus \Ga ,
\end{aligned}
\end{equation}
where $\Ga$ is the boundary of a bounded piecewise smooth domain (or a finite collection of such domains) $\Om\subset \R^3$, not necessarily convex, with exterior normal $\nu$.

We shortly recall some useful concepts and formulas regarding the above problem, based on \cite{BuffaHiptmair} and \cite{KirschHettlich}. For the usual trace we will use the notation $\ga$. The \emph{tangential} and \emph{magnetic} traces are defined, respectively, as
\begin{equation*}
\gamma_T v = v|_\Ga \times \nu, \andquad \gamma_N v = (s\inv \curl v)|_\Ga \times \nu .
\end{equation*}
These traces are also often called \emph{Dirichlet trace} and \emph{Neumann trace}, motivated by the analogue of Green's formula for Maxwell's equations (for sufficiently regular functions):
\begin{equation}
\label{eq: Green}
\begin{aligned}
\int_\Om \bigl( w \cdot \curl v - \curl w \cdot v \bigr)\d x =&\ \int_\Ga (\ga w \times \nu) \cdot \ga v \,\d\sigma \\
=&\ \int_\Ga -(\ga w \times \ga v) \cdot \nu\, \d\sigma .
\end{aligned}
\end{equation}
We introduce an  important notation, the
\begin{equation*}
\textnormal{\it anti-symmetric pairing on $L^2(\Ga)$}: \quad\ \la \ga w , \ga v \ra_\Ga = \int_\Ga (\ga w \times \nu) \cdot \ga v \,\d\sigma ,
\end{equation*}
which appears on the right-hand side of \eqref{eq: Green}.
We note that the relation $\la \ga w , \ga v \ra_\Ga = \la \ga_T w , \ga_T v \ra_\Ga$ holds, cf.\ \cite{BuffaHiptmair,KirschHettlich}.

Let us now set $w=s\inv \curl u$, which provides
\begin{equation*}
\frac1s \int_\Om \bigl( \curl u \cdot \curl v - \curl \curl u \cdot v \bigr) \d x=
\la \ga_N u , \ga_T v \ra_\TG .
\end{equation*}
Moreover, if $u$ satisfies \eqref{eq: time-harmonic Maxwell }
and $v=u$, then
\begin{equation}
\label{eq: Green v=u}
\int_\Om \bigl( s\inv |\curl u|^2 + \eps\mu s |u|^2 \bigr) \d x = \la \ga_N u , \ga_T u \ra_\TG .
\end{equation}

\subsection{Function spaces}
We collect some results on function spaces, which will play an important role later on. All of the results in the present subsection can be found in Section~2 of \cite{BuffaHiptmair}.

Let us start by recalling the usual definition of the Sobolev space corresponding to the $\curl$ operator:
\begin{equation*}
H(\curl,\Om) = \big\{ v \in L^2(\Om)^3 \,:\, \curl v \in L^2(\Om)^3 \big\} ,
\end{equation*}
with corresponding norm $\|v\|_{H(\curl,\Om)}^2 = \|v\|_{L^2(\Om)^3}^2 + \|\curl v\|_{L^2(\Om)^3}^2$.

Clearly, the above integral relations hold for functions $v,w \in H(\curl,\Om)$. 

Now we are turning to trace spaces. However, even though $\ga_T: H(\curl,\Om) \to H^{-1/2}(\Ga)$ is a continuous mapping, $H^{-1/2}(\Ga)$ is not the right choice for boundary integral operators. As it was emphasized by Buffa and Hiptmair \cite{BuffaHiptmair}: the study of the continuous mapping $\ga_T : H(\curl,\Om) \to H^{-1/2}(\Ga)$ is \emph{``actually sufficient for the understanding of homogeneous boundary conditions for fields in the Hilbert space context. However, to impose meaningful non-homogeneous boundary conditions or, even more important, to lay the foundations for boundary integral equations we need to identify a proper trace space''}\footnote{Quoted from Buffa and Hiptmair, \cite{BuffaHiptmair}, Section~2.2.}. In the following, we briefly summarize the definition of such a trace space, together with some related results.

The Hilbert space $\Ht^p(\Ga)$ collects the $\ga_T$ traces of $H^{p+1/2}(\Om)$ functions, for $p \in (0,1)$, i.e., $\Ht^p(\Ga)=\ga_T(H^{p+1/2}(\Om))$. This space is equipped with an inner product  such that $\ga_T : H^{p+1/2}(\Om) \to \Ht^p(\Ga)$ is continuous. In particular, the space $\Ht^{1/2}(\Ga)=\ga_T(H^1(\Om))$ has the dual space $\Ht^{-1/2}(\Ga)$, defined  with respect to the (extended) duality $\la\cdot,\cdot\ra_\TG$.

Then, the above mentioned \emph{proper trace space} is given as:
\begin{equation*}
\Hdiv = H_{\times}^{-1/2}(\div_\Ga,\Ga) = \big\{ w \in \Ht^{-1/2}(\Ga) \,:\, \div_\Ga w \in H^{-1/2}(\Ga) \big\} ,
\end{equation*}
with norm
\begin{equation}\label{Hdiv-norm}
\|w\|_{\Hdiv}^2 = \|w\|_{\Ht^{-1/2}(\Ga)}^2 + \|\div_\Ga w\|_{H^{-1/2}(\Ga)}^2 .
\end{equation}

The tangential trace satisfies the following analogue of the trace theorem.
\begin{lemma}[\cite{BuffaCostabelSheen_traces}, Theorem 4.1]
	\label{lemma: trace ineq}
	The trace operator $\ga_T : H(\curl,\Om) \to \Hdiv$ is continuous and surjective.
\end{lemma}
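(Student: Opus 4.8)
\emph{Strategy.} I would prove continuity and surjectivity separately, organizing both around Green's formula \eqref{eq: Green} and the two summands of the norm \eqref{Hdiv-norm}. The continuity direction is the more routine one and can be settled entirely with the tools in the excerpt; surjectivity is where the real work lies.

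\emph{Continuity.} The plan is to bound the two contributions to $\|\ga_T v\|_{\Hdiv}$ in turn. For the $\Ht^{-1/2}(\Ga)$-part I would test against tangential traces $\ga_T\eta$ of $H^1(\Om)^3$-fields $\eta$, which span $\Ht^{1/2}(\Ga)$. Using $\la\ga w,\ga v\ra_\Ga=\la\ga_T w,\ga_T v\ra_\Ga$ together with Green's formula \eqref{eq: Green} with $w=v$,
\begin{equation*}
    \la \ga_T v, \ga_T\eta \ra_\Ga = \int_\Om \bigl( v\cdot\curl\eta - \curl v\cdot\eta \bigr)\,\d x ,
\end{equation*}
so that the Cauchy--Schwarz inequality gives $|\la\ga_T v,\ga_T\eta\ra_\Ga|\le\|v\|_{H(\curl,\Om)}\|\eta\|_{H(\curl,\Om)}\le C\|v\|_{H(\curl,\Om)}\|\eta\|_{H^1(\Om)^3}$; taking the infimum over all extensions $\eta$ of a given trace yields $\|\ga_T v\|_{\Ht^{-1/2}(\Ga)}\le C\|v\|_{H(\curl,\Om)}$. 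For the surface-divergence part the key is the identity
\begin{equation*}
    \div_\Ga(\ga_T v) = (\curl v\cdot\nu)|_\Ga ,
\end{equation*}
which I would obtain from the definition $\la\div_\Ga(\ga_T v),\phi\ra=-\int_\Ga \ga_T v\cdot\nb_\Ga\phi\,\d\sigma$ by choosing an $H^1(\Om)$-extension $\Phi$ of $\phi$ and rewriting the right-hand side through \eqref{eq: Green} with $w=\nb\Phi$ (so $\curl w=0$); an integration by parts in $\Om$ and $\div\curl v=0$ then identify the result with the normal trace of $\curl v$. Since $\curl v\in L^2(\Om)^3$ with $\div\curl v=0$, it lies in $H(\div,\Om)$, and the classical normal trace theorem gives $(\curl v\cdot\nu)|_\Ga\in H^{-1/2}(\Ga)$ with $\|(\curl v\cdot\nu)|_\Ga\|_{H^{-1/2}(\Ga)}\le C\|\curl v\|_{L^2(\Om)^3}$. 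Adding the two bounds and recalling \eqref{Hdiv-norm} proves continuity.

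\emph{Surjectivity.} For surjectivity I would construct a bounded right inverse $\calE:\Hdiv\to H(\curl,\Om)$ via a surface Hodge--Helmholtz decomposition of $w\in\Hdiv$ into a surface-gradient field and a surface-divergence-free field, $w=\nb_\Ga p+w_0$, where $p$ solves the surface problem $\Delta_\Ga p=\div_\Ga w\in H^{-1/2}(\Ga)$ (hence $p\in H^{3/2}(\Ga)$) and $\div_\Ga w_0=0$. Each piece is then lifted with a controlled $H(\curl,\Om)$-norm: the scalar potential $p$ is extended into $\Om$ by the standard continuous right inverse of the scalar trace, and the two pieces are recombined through $\nb$ and $\curl$ so as to reproduce $w$ as a tangential trace, the rotation by $\nu$ in the definition of $\ga_T$ interchanging the roles of the gradient and curl liftings. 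The divergence-free part $w_0$ contributes only to the $\Ht^{-1/2}(\Ga)$-component of the norm and is lifted so that its curl remains in $L^2(\Om)$.

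\emph{Main obstacle.} The continuity part is essentially the bookkeeping of two estimates once the identity $\div_\Ga(\ga_T v)=(\curl v\cdot\nu)|_\Ga$ is in place. The genuine difficulty is surjectivity, and specifically the construction of a lifting bounded with respect to \emph{both} terms of \eqref{Hdiv-norm} simultaneously: the extension must realize the prescribed tangential field while its curl controls the $\div_\Ga$-part, and the two are coupled through the geometry of $\Ga$. For a merely piecewise smooth (Lipschitz) boundary the Hodge decomposition and the mapping properties of $\nb_\Ga$, $\div_\Ga$ and $\Delta_\Ga$ are delicate, and any topology of $\Ga$ contributes a finite-dimensional space of harmonic tangential fields that must be lifted separately. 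I would therefore establish the bounded lifting first on the dense subspace of smooth tangential fields, where all manipulations are classical, and then pass to the limit using density in $\Hdiv$ together with the uniform operator bound; this density-plus-uniform-bound step is where I expect to spend most of the effort. The full argument is carried out in \cite{BuffaCostabelSheen_traces,BuffaHiptmair}.
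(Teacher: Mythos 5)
First, a point of comparison that matters for this review: the paper does \emph{not} prove this lemma at all. It is imported verbatim from \cite{BuffaCostabelSheen_traces} (Theorem 4.1) and used as a black box, so there is no ``paper proof'' to measure you against; your attempt can only be compared with the cited literature.

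Your continuity argument is complete, correct, and is essentially the standard one: the duality bound $|\la \ga_T v,\ga_T\eta\ra_\TG|\le \|v\|_{H(\curl,\Om)}\|\eta\|_{H(\curl,\Om)}$ obtained from Green's formula \eqref{eq: Green} controls the $\Ht^{-1/2}(\Ga)$ part of the norm \eqref{Hdiv-norm}, and the identity $\div_\Ga(\ga_T v)=\pm(\curl v\cdot\nu)|_\Ga$ (the sign depends on orientation conventions and is immaterial) reduces the second part to the normal trace theorem in $H(\div,\Om)$ applied to the divergence-free field $\curl v$. Your derivation of that identity via $w=\nb\Phi$ in \eqref{eq: Green} is sound.

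The surjectivity half is the right strategy --- it is the route of \cite{BuffaCostabelSheen_traces} and \cite{BuffaHiptmair} --- but as written it contains a genuine gap exactly at its load-bearing step. You claim that $\Delta_\Ga p=\div_\Ga w\in H^{-1/2}(\Ga)$ yields $p\in H^{3/2}(\Ga)$. This elliptic shift for the Laplace--Beltrami operator is valid on smooth surfaces but fails in general on Lipschitz or piecewise-smooth surfaces: intrinsically, such a surface has conical singularities at vertices, and at non-convex vertices the singular exponent drops below the level needed for $H^{3/2}$ regularity. Since the paper's setting is precisely a piecewise smooth, not necessarily convex boundary, the proof cannot rest on this step as stated. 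Constructing a Hodge-type decomposition of $\Hdiv$ that is meaningful on such surfaces, proving that each component (including the finite-dimensional harmonic fields) admits a bounded $H(\curl,\Om)$ lifting, and establishing the density of regular tangential fields that your ``density-plus-uniform-bound'' limit presupposes, is exactly the technical content of \cite{BuffaCostabelSheen_traces}. So: continuity proven; surjectivity correctly strategized and its difficulties correctly located, but not proven --- which, to be fair, mirrors how the paper itself treats the result, namely by citation.
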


The following lemma clarifies the role of the anti-symmetric pairing $\la \cdot,\cdot \ra_\TG$.\begin{lemma}[\cite{BuffaCostabelSheen_traces}, Lemma 5.6 and \cite{BuffaHiptmair}, Theorem 2]
	\label{lemma: Hdiv-dual}
	The pairing $\la \cdot,\cdot \ra_\TG$ can be extended to a continuous bilinear form on $\Hdiv$. With this pairing the space $\Hdiv$ becomes its own dual.
\end{lemma}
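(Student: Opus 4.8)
The plan is to establish the two assertions separately: first the continuity of the extended bilinear form, which I can obtain directly from Green's formula \eqref{eq: Green} together with the trace theorem Lemma~\ref{lemma: trace ineq}; and then the self-duality, which I would reduce to the classical duality between the surface $\div_\Ga$- and $\curl_\Ga$-spaces by rotating one of the arguments in the tangent plane.

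For the continuity I would start from the observation that on smooth fields the pairing coincides with the volume expression in Green's formula, so that for $w,v\in H(\curl,\Om)$
\begin{equation*}
  |\la \ga_T w, \ga_T v\ra_\TG| = \Bigl|\int_\Om \bigl(w\cdot\curl v - \curl w\cdot v\bigr)\,\d x\Bigr| \le \|w\|_{H(\curl,\Om)}\,\|v\|_{H(\curl,\Om)} ,
\end{equation*}
the last step being Cauchy--Schwarz in $\R^2$ applied to the pairs $(\|w\|_{L^2},\|\curl w\|_{L^2})$ and $(\|\curl v\|_{L^2},\|v\|_{L^2})$. The relation $\la \ga w,\ga v\ra_\TG = \la \ga_T w,\ga_T v\ra_\TG$ shows that this number depends on $w,v$ only through their tangential traces, hence is independent of the chosen $H(\curl,\Om)$-liftings. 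By Lemma~\ref{lemma: trace ineq} the map $\ga_T$ is continuous and surjective, so the open mapping theorem furnishes, for each $\phi\in\Hdiv$, a lifting with $\|w\|_{H(\curl,\Om)}\le C\|\phi\|_{\Hdiv}$; choosing such minimal liftings for both arguments gives $|\la \phi,\psi\ra_\TG|\le C^2\|\phi\|_{\Hdiv}\|\psi\|_{\Hdiv}$, and density of smooth traces in $\Hdiv$ extends the pairing to a continuous bilinear form.

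For the self-duality I would introduce the tangential rotation $R\phi=\nu\times\phi$. Using the standard surface identities that interchange the two surface differential operators, $\div_\Ga(\nu\times\phi)=\curl_\Ga\phi$ and $\curl_\Ga(\nu\times\phi)=-\div_\Ga\phi$ (signs depending on the orientation convention), the operator $R$ is a bounded isomorphism from $\Hdiv=\Ht^{-1/2}(\div_\Ga,\Ga)$ onto $\Ht^{-1/2}(\curl_\Ga,\Ga)$. Since $\phi\times\nu=-R\phi$, the anti-symmetric pairing becomes, for $\phi,\psi\in\Hdiv$,
\begin{equation*}
  \la \phi,\psi\ra_\TG = \int_\Ga(\phi\times\nu)\cdot\psi\,\d\sigma = -\int_\Ga (R\phi)\cdot\psi\,\d\sigma ,
\end{equation*}
i.e.\ up to sign it is the $L^2(\Ga)$ duality between $R\phi\in\Ht^{-1/2}(\curl_\Ga,\Ga)$ and $\psi\in\Ht^{-1/2}(\div_\Ga,\Ga)$. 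Taking from \cite{BuffaCostabelSheen_traces,BuffaHiptmair} the fact that this $L^2$-pairing realizes $\Ht^{-1/2}(\curl_\Ga,\Ga)$ as the dual of $\Hdiv$, and composing with the isomorphism $R$, then identifies $\Hdiv$ with its own dual under $\la\cdot,\cdot\ra_\TG$.

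The main obstacle is precisely this last $L^2$-duality at the fractional order $-1/2$ on a merely piecewise smooth $\Ga$. I would establish it through the surface Hodge (Helmholtz) decomposition, writing a tangential field as $\phi=\nb_\Ga p + R\,\nb_\Ga q + \phi_{\mathrm{harm}}$, checking that the pairing diagonalizes over this splitting, and matching the fractional regularities so that each block reduces to the scalar $H^{1/2}(\Ga)$--$H^{-1/2}(\Ga)$ duality; the $\div_\Ga$-regularity of $\phi$ controls the component against which one tests. Making the surface operators $\nb_\Ga,\div_\Ga,\curl_\Ga$ and the spaces $\Ht^{\pm1/2}(\Ga)$ rigorous in the non-smooth setting, and verifying that $R$ is a bounded isomorphism there, is the genuinely technical heart of the matter, carried out in \cite{BuffaCostabelSheen_traces,BuffaHiptmair}; in my proof I would invoke their construction rather than reproduce it.
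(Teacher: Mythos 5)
This lemma is nowhere proved in the paper: it is imported verbatim, with citations, from \cite{BuffaCostabelSheen_traces} (Lemma 5.6) and \cite{BuffaHiptmair} (Theorem 2), so there is no in-paper argument to compare yours against. Your sketch is a correct reconstruction of how those references establish the result, and it defers to them for exactly the parts they actually prove: the construction of $\div_\Ga$, $\curl_\Ga$ and the spaces $\Ht^{\pm1/2}$ on a merely Lipschitz or piecewise smooth boundary, the fact that the rotation $\phi\mapsto\nu\times\phi$ is a bounded isomorphism of $\Hdiv$ onto $\Ht^{-1/2}(\curl_\Ga,\Ga)$, and the $L^2$-duality of these two spaces obtained through the surface Hodge decomposition. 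In substance, therefore, your proposal and the paper occupy the same position: both rest the technical core on the cited works. One step you should tighten: the claim that the volume expression $\int_\Om(w\cdot\curl v-\curl w\cdot v)\,\d x$ depends on $w,v$ only through $\ga_T w,\ga_T v$ does not follow from the smooth-field identity $\la \ga w,\ga v\ra_\TG=\la \ga_T w,\ga_T v\ra_\TG$ alone, since for rough fields the surface integral is not yet meaningful --- it is precisely the object being constructed, so the argument as written is circular. Instead, well-definedness of the lifting-based extension follows either from the weak definition of the tangential trace (pair against $v\in H^1(\Om)^3$ and use density of $H^1(\Om)^3$ in $H(\curl,\Om)$) or from the kernel characterization $\ker\ga_T=H_0(\curl,\Om)$; both are available in \cite{BuffaCostabelSheen_traces} and should be invoked explicitly at that point.
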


The above results clearly point out that a natural choice of trace space is $\big(\Hdiv,\la \cdot,\cdot \ra_\TG\big)$, which fits perfectly to the analogue of Green's formula \eqref{eq: Green} and to the boundary integral formulation of Maxwell's equations. This trace space is appropriate for the analysis of boundary integral operators.

\subsection{Boundary integral operators}
\label{subsection: boundary integral op}

On potentials and boundary integral operators we follow Buffa and Hiptmair \cite{BuffaHiptmair}, and we also refer to \cite{BuffaHiptmairvonPetersdorffSchwab,Costabel2004}.

The usual boundary integral potentials for the time-harmonic Maxwell's equation
\begin{equation*}
\eps\mu s^2u + \curl \curl u = 0 \qin \R^3 \setminus \Ga
\end{equation*}
are obtained, based on \cite{BuffaHiptmair} and \cite{BallaniBanjaiSauterVeit}: the (electric) \emph{single layer potential} is given, for $x \in \R^3\setminus \Ga$, as
\begin{equation*}
\calS(s)\vphi(x) = -s \int_\Ga G(s,x-y) \vphi(y) \d y + s\inv \frac{1}{\eps\mu} \nb \int_\Ga G(s,x-y) \div_\Ga \vphi(y) \d y,
\end{equation*}
while the (electric) \emph{double layer potential} is given, for $x \in \R^3\setminus \Ga$, as
\begin{equation*}
\calD(s)\vphi(x) = \curl \int_\Ga G(s,x-y) \vphi(y) \d y,
\end{equation*}
where the fundamental solution is given, for $z\in\R^3$, as
\begin{equation*}
G(s,z) = \frac{e^{-s\sqrt{\eps\mu}|z|}}{4\pi |z|}.
\end{equation*}

The solution then has the representation
\begin{equation}
\label{eq: potential representation of solution}
u = \calS(s)\vphi + \calD(s)\psi,  \qquad x \in \R^3\setminus \Ga ,
\end{equation}
where
\begin{equation}
\label{eq: Laplace domain boundary term equalities}
\vphi = \jp{\gamma_N u} = \jp{\ga_T (s\inv \curl u)} \andquad \psi = \jp{\gamma_T u}.
\end{equation}
Here $\jp{\gamma v} = \gamma^- v - \gamma^+ v$ denotes the jumps in the boundary traces. A further notation is the average of the inner and outer traces on the boundary: $\av{\gamma v} = \half (\gamma^- v + \gamma^+ v)$. On vectors both operations are acting componentwise.

For every $\vphi\in\Hdiv$ and $\psi\in\Hdiv$, formula \eqref{eq: potential representation of solution} defines
$u\in H_{\mathrm{loc}}(\curl,\R^3\setminus\Gamma)$. Because of the jump relations
\begin{align*}
\jp{\gamma_N \circ \calS(s)} =&\ \hbox{Id}, &  \jp{\gamma_N \circ \calD(s)} =&\ 0,
\\
\jp{\gamma_T \circ \calS(s)} =&\ 0, & \jp{\gamma_N \circ \calD(s)} =&\ \hbox{Id},
\end{align*}
$\vphi$ and $\psi$ are reconstructed from $u$ by \eqref{eq: Laplace domain boundary term equalities}.

Let us now define the boundary integral operators. As opposed to the general second order elliptic case, due to additional symmetries of the problem, they reduce to two operators $V$ and $K$, see  \cite[Section~5]{BuffaHiptmair}. They satisfy
\begin{align*}
V(s) =&\ \av{\gamma_T \circ \calS(s)} = \av{\gamma_N \circ \calD(s)} , \\
K(s) =&\ \av{\gamma_T \circ \calD(s)} = \av{\gamma_N \circ \calS(s)} . 
\end{align*}
In \cite[Section~5]{BuffaHiptmair} the continuity of these operators was proven, without giving an explicit dependence on $s$. Such bounds are crucial in the analysis later, therefore we now show
$s$-explicit estimates for the boundary integral operators. Our result is based on \cite{BallaniBanjaiSauterVeit}.
\begin{lemma}
	\label{lemma: int operator s bounds}
	For $\re s \geq \sigma>0$ the boundary integral operators $V(s), K(s): \Hdiv \to \Hdiv$ are bounded as
	\begin{align*}
	\|V(s)\| \leq  C(\sigma) |s|^2  \andquad
	\|K(s)\| \leq  C(\sigma) |s|^2 .
	\end{align*}
\end{lemma}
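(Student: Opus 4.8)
The plan is to derive both estimates from a single energy bound for the electric single layer potential $u=\calS(s)\vphi$, exploiting the two equivalent representations $V(s)=\av{\ga_T\circ\calS(s)}$ and $K(s)=\av{\ga_N\circ\calS(s)}$. For $\re s\ge\sigma>0$ the kernel $G(s,\cdot)$ decays exponentially, so $u$ has finite energy on all of $\R^3$; since the single layer carries no tangential jump, $\jp{\ga_T u}=0$, the field is in fact globally conforming, $u\in H(\curl,\R^3)$. Testing \eqref{eq: time-harmonic Maxwell } against $\bar v$ for $v\in H(\curl,\R^3)$ and integrating by parts in the interior $\Om^-=\Om$ and the exterior $\Om^+=\R^3\setminus\overline{\Om}$ via Green's formula \eqref{eq: Green}, the two boundary contributions combine (the exterior normal carrying the opposite sign) into $\jp{\ga_N u}=\vphi$, which yields the variational identity
\begin{equation*}
    \eps\mu s\int_{\R^3} u\cdot\bar v\,\d x + s\inv\int_{\R^3\setminus\Ga}\curl u\cdot\overline{\curl v}\,\d x = \la\vphi,\ga_T v\ra_\TG .
\end{equation*}

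The central step is to take $v=u$ and real parts. Since $\re s\ge\sigma>0$ forces $\re(s\inv)=\re(s)/|s|^2\ge0$, the left-hand side is coercive, and with the weighted energy norm $\|u\|_{*,s}^2:=\eps\mu\|u\|_{L^2(\R^3)}^2+|s|^{-2}\|\curl u\|_{L^2(\R^3\setminus\Ga)}^2$ one obtains
\begin{equation*}
    \re(s)\,\|u\|_{*,s}^2 \le \bigl|\la\vphi,\ga_T u\ra_\TG\bigr| \le C\,\|\vphi\|_{\Hdiv}\,\|\ga_T u\|_{\Hdiv},
\end{equation*}
using continuity of the pairing on the self-dual space $\Hdiv$ (Lemma~\ref{lemma: Hdiv-dual}). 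Bounding $\|\ga_T u\|_{\Hdiv}\le C\|u\|_{H(\curl,\Om)}$ by the trace theorem (Lemma~\ref{lemma: trace ineq}, with a constant depending only on $\Ga$) and comparing the norms, $\|u\|_{H(\curl,\R^3\setminus\Ga)}\le C(\sigma)|s|\,\|u\|_{*,s}$ for $|s|\ge\sigma$, I divide by $\|u\|_{*,s}$ and use $\re s\ge\sigma$ to conclude $\|u\|_{*,s}\le C(\sigma)|s|\,\|\vphi\|_{\Hdiv}$, hence $\|u\|_{H(\curl,\R^3\setminus\Ga)}\le C(\sigma)|s|^2\,\|\vphi\|_{\Hdiv}$.

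Both operator bounds then drop out. For $V$, the vanishing tangential jump gives $\av{\ga_T u}=\ga_T u|_{\Om^-}$, so $\|V(s)\vphi\|_{\Hdiv}\le C\|u\|_{H(\curl,\Om)}\le C(\sigma)|s|^2\|\vphi\|_{\Hdiv}$. For $K$, I write $\av{\ga_N u}=\ga_N^- u-\half\vphi$ and estimate $\ga_N^- u=\ga_T(s\inv\curl u)|_{\Om^-}$ by applying the trace theorem to $s\inv\curl u$; using $\curl\curl u=-\eps\mu s^2 u$ from \eqref{eq: time-harmonic Maxwell } to replace $\curl(s\inv\curl u)=-\eps\mu s\,u$, a short computation gives $\|s\inv\curl u\|_{H(\curl,\Om)}\le C(\sigma)|s|\,\|u\|_{*,s}\le C(\sigma)|s|^2\|\vphi\|_{\Hdiv}$, whence $\|K(s)\|\le C(\sigma)|s|^2$.

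The hard part will be the bookkeeping of the powers of $s$. Coercivity supplies the favourable factor $\re s\ge\sigma$ but controls only the weighted norm $\|u\|_{*,s}$; passing to the unweighted $H(\curl)$ norm required by the ($s$-independent) trace theorem costs one factor $|s|$, and the magnetic trace costs a further factor $|s|$ through $\curl\curl u=-\eps\mu s^2 u$, which together produce the claimed $|s|^2$. One must also verify that all trace and duality constants genuinely depend only on $\Ga$, and handle the unbounded exterior by using interior traces together with the prescribed jumps, so that no $s$-dependent exterior trace constant enters; keeping $\re s\ge\sigma$ rather than merely $\re s>0$ is precisely what makes $C(\sigma)$ finite as $|s|$ stays bounded away from zero.
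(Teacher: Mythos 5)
Your proof is correct, but it follows a genuinely different route from the paper's: the paper does not prove these bounds from scratch --- its proof is essentially a citation, asserting that the estimates follow by adapting the arguments of \cite{BallaniBanjaiSauterVeit} (Section~4.2, Theorem~4.4), with the anti-symmetric pairing replacing the usual $L^2(\Ga)$ inner product. You instead give a self-contained Laplace-domain energy argument using only facts already stated in the paper: the jump relations $\jp{\ga_T\circ\calS(s)}=0$ and $\jp{\ga_N\circ\calS(s)}=\mathrm{Id}$, the representations $V(s)=\av{\ga_T\circ\calS(s)}$ and $K(s)=\av{\ga_N\circ\calS(s)}$, the trace theorem (Lemma~\ref{lemma: trace ineq}) and the self-duality of $\Hdiv$ (Lemma~\ref{lemma: Hdiv-dual}). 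In essence, your computation is the paper's own proof of the coercivity Lemma~\ref{lemma: coercivity} specialized to $\psi=0$ and run in reverse: there the interior/exterior Green identity $s\bigl(\|s\inv\curl u\|^2+\eps\mu\|u\|^2\bigr)=\la\vphi,\ga_T u\ra_\TG$ is used to bound the pairing from below, whereas you use the same identity, combined with duality and the trace theorem, to bound the energy of $u=\calS(s)\vphi$ from above, and then read off both operators from traces of this one potential ($V$ from $\ga_T u$, and $K$ from $\ga_T(s\inv\curl u)$ together with the prescribed jump, $\av{\ga_N u}=\ga_N^- u-\frac12\vphi$), so the double layer potential needs no separate treatment --- an economy afforded by the Maxwell symmetry relations. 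Your powers-of-$s$ bookkeeping is consistent, and working with interior traces plus known jumps does avoid any $s$-dependent exterior trace constant. What the paper's citation buys is brevity and the constants of \cite{BallaniBanjaiSauterVeit}; what your argument buys is a proof verifiable entirely within the paper, with explicit $\sigma$- and $s$-dependence. Two conventions should be made explicit in a final write-up: the pairing in the energy identity must be read sesquilinearly (i.e., test with the conjugate), exactly as the paper does tacitly in \eqref{eq: Green v=u} and in the proof of Lemma~\ref{lemma: coercivity}; and absorbing the residual term $\frac12\|\vphi\|_{\Hdiv}$ in the bound for $K(s)$ uses $1\le|s|^2/\sigma^2$, which is one more place where the assumption $\re s\ge\sigma$ enters.
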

\begin{proof}
	These estimates can be shown by adapting the arguments of \cite[Section~4.2]{BallaniBanjaiSauterVeit}. In particular, by using the anti-symmetric pairing $\la\cdot,\cdot\ra_\TG$ instead of the usual $L^2(\Ga)$ inner product, the results of \cite[Theorem~4.4]{BallaniBanjaiSauterVeit} transfer from $H^{-1/2}(\div_\Ga,\Ga) \to H^{-1/2}(\curl_\Ga,\Ga)$ to the estimates stated here.
	\QED\end{proof}

Furthermore, using the potential representation of the solution \eqref{eq: potential representation of solution}, the averages of the traces can be expressed using the operators $V$ and $K$ in the following way:
\begin{equation}
\label{eq: boundary int ops}
\begin{aligned}
\av{\gamma_T u}
=&\ \av{ \gamma_T \calS(s)\vphi } + \av{ \gamma_T \calD(s)\psi } \\
=&\ V(s)\vphi + K(s)\psi , \qquad \andquad \\[2mm]
\av{\gamma_N u}
=&\ \av{ \gamma_N \calS(s)\vphi } + \av{ \gamma_N \calD(s)\psi } \\
=&\ K(s)\vphi + V(s)\psi .
\end{aligned}
\end{equation}

\section{Coercivity of a Calderon operator for the time-harmonic Maxwell's equation}
\label{section: calderon}
An important role will be played by the following operator on $\Hdiv\times\Hdiv$, to which we refer as a \emph{Calderon operator}:
\begin{equation}
\label{def-B}
B(s)= \C
\left(
\begin{array}{cc}
V(s)  &  K(s) \\
-K(s) & -V(s) \\
\end{array}
\right) .
\end{equation}
The extra factor $\mu^{-1}$ appears unmotivated here, but will turn out to be convenient later.
This operator satisfies the following coercivity result, which is the key lemma of this paper.

\begin{lemma}
	\label{lemma: coercivity}
	There exists  $\beta>0$ such that the Calderon operator \eqref{def-B} satisfies
	\begin{align*}
	\re \biggl\la \vect{\vphi}{\psi}, B(s) \vect{\vphi}{\psi} \biggr\ra_\TG
	\geq \beta \,\m{s}  \Big( (\eps\mu)\inv \|s\inv\vphi\|_{\Hdiv}^2 + \|s\inv\psi\|_{\Hdiv}^2 \Big)
	\end{align*}
	for $\re s>0$ and for all $\vphi, \psi \in \Hdiv$, with
	$
	\m{s} = \min\{ 1, |s|^{2}\eps\mu \} \re s .
	$
\end{lemma}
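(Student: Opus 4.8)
The plan is to reduce the boundary bilinear form to a volume energy of the associated layer potential and then invoke \eqref{eq: Green v=u}. Given $\vphi,\psi\in\Hdiv$, let $u=\calS(s)\vphi+\calD(s)\psi$ be the potential \eqref{eq: potential representation of solution}; it lies in $H_{\mathrm{loc}}(\curl,\R^3\setminus\Ga)$ and, since $\re s>0$ and $\eps\mu>0$, both $u$ and $\curl u$ decay exponentially at infinity because of the factor $e^{-s\sqrt{\eps\mu}|z|}$ in $G$. By the jump relations $\jp{\gamma_N u}=\vphi$ and $\jp{\gamma_T u}=\psi$, while \eqref{eq: boundary int ops} gives $\av{\gamma_T u}=V(s)\vphi+K(s)\psi$ and $\av{\gamma_N u}=K(s)\vphi+V(s)\psi$. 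The goal of the first step is the identity
\begin{equation*}
  \mu\,\biggl\la\vect{\vphi}{\psi},B(s)\vect{\vphi}{\psi}\biggr\ra_\TG
  =\int_{\R^3\setminus\Ga}\bigl(s\inv|\curl u|^2+\eps\mu\,s\,|u|^2\bigr)\,\d x .
\end{equation*}

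Expanding the left-hand side via \eqref{def-B} and the above relations turns it into $\la\jp{\gamma_N u},\av{\gamma_T u}\ra_\TG-\la\jp{\gamma_T u},\av{\gamma_N u}\ra_\TG$. For the right-hand side I apply \eqref{eq: Green v=u} on $\Om$ and on $\R^3\setminus\overline\Om$ separately; on the exterior domain the outward normal is $-\nu$, producing a sign change, while the exponential decay kills the contribution at infinity. Writing $\gamma^\pm=\av{\gamma}\pm\tfrac12\jp{\gamma}$ and using the anti-symmetry of $\la\cdot,\cdot\ra_\TG$, the sum of the interior and exterior energy identities reduces to exactly the same jump--average combination, which establishes the identity. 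Taking real parts and using $\re(s\inv)=\re s/|s|^2$ gives the nonnegative expression
\begin{equation*}
  \re\biggl\la\vect{\vphi}{\psi},B(s)\vect{\vphi}{\psi}\biggr\ra_\TG
  =\mu\inv\!\int_{\R^3\setminus\Ga}\Bigl(\tfrac{\re s}{|s|^2}\,|\curl u|^2+\eps\mu\,\re s\,|u|^2\Bigr)\d x .
\end{equation*}

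To bound the right-hand side of the lemma, I estimate $\|\vphi\|_{\Hdiv}$ and $\|\psi\|_{\Hdiv}$ against this energy. With $w=s\inv\curl u$, the homogeneous equation \eqref{eq: time-harmonic Maxwell } yields $\curl w=-\eps\mu\,s\,u$, hence $\|w\|_{L^2}=|s|\inv\|\curl u\|_{L^2}$ and $\|\curl w\|_{L^2}=\eps\mu|s|\,\|u\|_{L^2}$ on each of $\Om$ and $\R^3\setminus\overline\Om$. Applying Lemma~\ref{lemma: trace ineq} on both sides to $\gamma_T^\pm u$ and to $\gamma_N^\pm u=\gamma_T^\pm w$, together with $\vphi=\jp{\gamma_N u}$ and $\psi=\jp{\gamma_T u}$, gives
\begin{align*}
  \|\vphi\|_{\Hdiv}^2 &\lesssim |s|^{-2}\|\curl u\|_{L^2(\R^3\setminus\Ga)}^2+(\eps\mu)^2|s|^2\,\|u\|_{L^2(\R^3\setminus\Ga)}^2 , \\
  \|\psi\|_{\Hdiv}^2 &\lesssim \|u\|_{L^2(\R^3\setminus\Ga)}^2+\|\curl u\|_{L^2(\R^3\setminus\Ga)}^2 ,
\end{align*}
with constants independent of $s$. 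Multiplying by $|s|^{-2}$ then bounds the target combination $(\eps\mu)\inv\|s\inv\vphi\|_{\Hdiv}^2+\|s\inv\psi\|_{\Hdiv}^2$ above by a weighted sum of the four terms $|s|^{-4}\|\curl u\|_{L^2}^2$, $\|u\|_{L^2}^2$, $|s|^{-2}\|u\|_{L^2}^2$ and $|s|^{-2}\|\curl u\|_{L^2}^2$.

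It remains to check that the energy dominates this upper bound with the weight $\m{s}=\min\{1,|s|^2\eps\mu\}\,\re s$. Comparing the coefficients of $\|\curl u\|_{L^2}^2$ and of $\|u\|_{L^2}^2$ reduces this to two scalar inequalities, which I would verify by distinguishing the cases $|s|^2\eps\mu\le1$ and $|s|^2\eps\mu>1$: in each case $\min\{1,|s|^2\eps\mu\}$ equals one of its arguments and both inequalities hold with a constant depending only on $\eps$ and $\mu$. Absorbing this constant and the trace constant into $\beta$ finishes the argument. I expect the third step to be the main obstacle: extracting $s$-explicit trace bounds with the correct powers of $|s|$ -- for which eliminating $\curl\curl u$ through the PDE is crucial -- and arranging the weights so that precisely $\m{s}$ appears. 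By comparison the energy identity is the Maxwell counterpart of a standard computation and the final case analysis is routine.
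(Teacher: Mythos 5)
Your proposal is correct and follows essentially the same route as the paper's proof: represent $u$ by the layer potentials \eqref{eq: potential representation of solution}, use the jump relations to identify $\mu\,\bigl\la \vect{\vphi}{\psi}, B(s)\vect{\vphi}{\psi}\bigr\ra_\TG$ with the interior-plus-exterior Green's-formula energy $\int_{\R^3\setminus\Ga}(s\inv|\curl u|^2+\eps\mu\,s|u|^2)\,\d x$, then bound $\|\vphi\|_{\Hdiv}$ and $\|\psi\|_{\Hdiv}$ via the trace inequality of Lemma~\ref{lemma: trace ineq} applied to $u$ and to $w=s\inv\curl u$ (using the PDE to replace $\curl\curl u$ by $-\eps\mu s^2 u$), and finish with the $\min\{1,|s|^2\eps\mu\}$ weight comparison. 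Your scalar case analysis in the last step does close as claimed, so the argument is complete and matches the paper's in all essentials.
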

\begin{proof}
	The proof has a structure similar to the proof of the corresponding result for the acoustic Helmholtz equation \cite[Lemma~3.1]{leapfrog}, although it now uses a different functional-analytic setting.  The structural similarity becomes possible thanks to the anti-symmetric duality pairing that replaces the symmetric duality pairing of the acoustic case.
	
	For given $\vphi, \psi \in \Hdiv$, we define $u\in H(\curl,\R^3\setminus\Gamma)$ by the representation formula \eqref{eq: potential representation of solution}. We can then express $\vphi$ and $\psi$ in terms of $u$ by \eqref{eq: Laplace domain boundary term equalities}. We note that  \eqref{eq: boundary int ops} yields
	\begin{equation*}
	B(s) \vect{\vphi}{\psi} = \C \vect{\av{\gamma_T u}}{- \av{\gamma_N u}}.
	\end{equation*}
	Now, using the properties of the  anti-symmetric pairing $\la\cdot,\cdot\ra_\TG$ (acting componentwise on $\Hdiv \times \Hdiv$), the analogue of Green's formula \eqref{eq: Green v=u} and using the definition of the traces, we obtain
	\begin{align*}
	\mu  \ \biggl\la \vect{\vphi}{\psi}, B(s) \vect{\vphi}{\psi} \biggr\ra_\TG
	=&\ \ \la \jp{\ga_N u} , \av{\ga_T u} \ra_\TG + \la \jp{\ga_T u} , -\av{\ga_N u} \ra_\TG \\
	=&\ \la \ga_N^- u , \ga_T^- u \ra_\TG - \la \ga_N^+ u , \ga_T^+ u \ra_\TG\\
	=&\ s \ \Big(\|s\inv \curl u\|_{L^2(\R^3\setminus\Ga)}^2 + \eps\mu \|u\|_{L^2(\R^3\setminus\Ga)}^2 \Big).
	\end{align*}
	We further obtain
	\begin{align*}
	& \|\psi\|_{\Hdiv}^2 = \big\| \jp{\ga_T u} \big\|_{\Hdiv}^2 \\
	& \leq \ C \Big( \|\curl u\|_{L^2(\R^3\setminus\Ga)^3}^2 + \|u\|_{L^2(\R^3\setminus\Ga)^3}^2 \Big)\\
	&=  \ C |s|^2 \Big( \|s\inv \curl u\|_{L^2(\R^3\setminus\Ga)^3}^2 + |s|^{-2} \|u\|_{L^2(\R^3\setminus\Ga)^3}^2 \Big)\\
	& \leq \ C |s|^2 \max\{1,|s|^{-2}(\eps\mu)\inv\} \Big(\|s\inv \curl u\|_{L^2(\R^3\setminus\Ga)^3}^2 + \eps\mu \|u\|_{L^2(\R^3\setminus\Ga)^3}^2 \Big) ,
	\end{align*}
	and for $\ga_N$ we use the fact that $\ga_N u = \gamma_T (s\inv \curl u)$:
	\begin{align*}
	&(\eps\mu)\inv \|\vphi\|_{\Hdiv}^2 = (\eps\mu)\inv \big\| \jp{\ga_T (s\inv \curl u)} \big\|_{\Hdiv}^2 \\
	&\leq \ C (\eps\mu)\inv \Big(\|s\inv \curl \curl u\|_{L^2(\R^3\setminus\Ga)^3}^2 + \|s\inv \curl u\|_{L^2(\R^3\setminus\Ga)^3}^2 \Big) \\
	&=\ C \Big( \eps\mu \|s u\|_{L^2(\R^3\setminus\Ga)^3}^2  + (\eps\mu)\inv \|s\inv \curl u\|_{L^2(\R^3\setminus\Ga)^3}^2 \Big) \\
	&\leq \ C |s|^2 \max\{1 ,|s|^{-2}(\eps\mu)\inv\} \Big(\|s\inv \curl u\|_{L^2(\R^3\setminus\Ga)^3}^2 + \eps\mu\|u\|_{L^2(\R^3\setminus\Ga)^3}^2 \Big)
	\end{align*}
	where, for the first inequalities in both estimates, we used the trace inequality of Lemma~\ref{lemma: trace ineq}. Extraction of factors and dividing through completes the proof.
	\QED\end{proof}

\section{Boundary integral formulation of Maxwell's equations}
\label{section: boundary int form}

Let us consider the first order formulation of Maxwell's equations, in the following form:
\begin{equation*}
\begin{aligned}
\eps \pa_t \E - \curl \H =&\ \J \\
\mu \pa_t \H + \curl \E =&\ 0 \\
\div\big(\eps \E\big) =&\ 0 \\
\div\big(\mu \H\big) =&\ 0
\end{aligned}
\qquad\qquad \textrm{ in } \Om,
\end{equation*}
with appropriate initial and boundary conditions. If the initial conditions satisfy the last two equations, then they hold for all times, see \cite{Monk,ChenMonk}, therefore these conditions are assumed to hold.
The permeability and permittivity is denoted by $\mu$ and $\eps$, respectively, and they are assumed to be positive constants, while $\J$ denotes the electric current density.

Using the relation $\pa_t\H = -\mu\inv \curl \E$, the above equation can be written as the second order problem
\begin{equation*}
\eps\mu \pa_t^2 \E + \curl \curl \E = \dot\J \qin \Om ,
\end{equation*}
with $\dot\J = \pa_t \J$.

Setting $\dot\J=0$, applying Laplace transformation, and writing $u$ instead of ${\cal L}\E$, we obtain the time-harmonic version \eqref{eq: time-harmonic Maxwell }.

\subsection{Recap: Temporal convolutions and Herglotz theorem}
\label{subsec:herglotz}

We  recall an operator-valued continuous-time Herglotz theorem from \cite[Section 2.2]{leapfrog}, which is crucial for transferring the coercivity result of Lemma~\ref{lemma: coercivity} from the Maxwell's equation in the Laplace domain to the time-dependent Maxwell's equation. We describe the result in an abstract Hilbert space setting. 

Let $\calH$ be a complex Hilbert space, with dual $\calH'$ and anti-duality $\langle\cdot,\cdot\rangle$. Let $B(s):\calH\to \calH'$ and $R(s):\calH \to \calH$ be both analytic families of bounded linear operators for $\re s \geq \sigma>0$, satisfying the uniform bounds:
\begin{equation*}
\|B(s)\|_{\calH'\leftarrow \calH} \leq M|s|^\mu \andquad \|R(s)\|_{\calH'\leftarrow \calH} \leq M|s|^\mu, \qquad \re s \geq \sigma .
\end{equation*}
For any integer $m > \mu+1$, we define the integral kernel
\begin{equation*}
B_m(t) = \frac{1}{2\pi\iu} \int_{\sigma+\iu\R} e^{st}s^{-m}B(s) \d s .
\end{equation*}
For a function $w\in C^m([0,T],\calH)$ with vanishing initial data, $w(0)=w'(0)=\dotsb=w^{(m-1)}(0)=0$, we let
\begin{equation*}
(B(\pa_t)w)(t) = \Big(\diff\Big)^m \int_0^t B_m(t-\tau)w(\tau) \d \tau ,
\end{equation*}
that is, $B(\pa_t)w$ is the distributional convolution of the inverse Laplace transform of $B(s)$ with $w$.

The operator-valued version of the classical Herglotz theorem from \cite{leapfrog} yields the following result.
\begin{lemma}[\cite{leapfrog}, Lemma~2.2]
	\label{lemma: Herglotz}
	In the above setting, the following two statements are equivalent:
	\begin{enumerate}
		\item[(i)] $\re \langle w,B(s)w \rangle \geq \beta \|R(s)w\|^2$, for any $w \in \calH$, $\re s \geq \sigma$;\\[-2mm]
		\item[(ii)] $\int_0^\infty e^{-2\sigma t} \re \langle w(t),B(\pa_t)w(t) \rangle \d t \geq \beta \int_0^\infty e^{-2\sigma t} \|R(\pa_t)w(t)\|^2 \d t$, for all $w\in C^m([0,T],\calH)$, with $w(0)=w'(0)=\dotsb=w^{(m-1)}(0)=0$, and for all $t\geq0$.
	\end{enumerate}
\end{lemma}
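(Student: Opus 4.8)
The plan is to pass to the Laplace/Fourier domain on the vertical line $\re s=\sigma$ and exploit a Plancherel identity, so that the continuous-time inequality in (ii) becomes a frequency-integrated version of the pointwise inequality in (i). For $w$ as in (ii), write $\widehat w(s)=\int_0^\infty e^{-st}w(t)\,\d t$ for its Laplace transform. The operational definition of $B(\pa_t)$, together with $m>\mu+1$ (which guarantees that $B_m$ and its first $m-1$ derivatives vanish at $t=0$ and that all transforms below are absolutely integrable on $\re s=\sigma$), gives, after $m$-fold integration by parts and use of the vanishing initial data,
\[
  \widehat{B(\pa_t)w}(\sigma+\iu\omega)=B(\sigma+\iu\omega)\,\widehat w(\sigma+\iu\omega),\qquad \widehat{R(\pa_t)w}(\sigma+\iu\omega)=R(\sigma+\iu\omega)\,\widehat w(\sigma+\iu\omega).
\]

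First I would establish the two Parseval identities that form the computational heart of the argument. Writing $\mathcal{F}$ for the Fourier transform and noting that $\widehat w(\sigma+\iu\omega)=\mathcal{F}[e^{-\sigma t}w](\omega)$, an application of Parseval's theorem for $\calH$-valued $L^2$-functions, paired through the continuous anti-duality $\langle\cdot,\cdot\rangle$, yields
\[
  \int_0^\infty e^{-2\sigma t}\,\re\langle w(t),B(\pa_t)w(t)\rangle\,\d t
   =\frac{1}{2\pi}\int_{\R}\re\big\langle \widehat w(\sigma+\iu\omega),B(\sigma+\iu\omega)\widehat w(\sigma+\iu\omega)\big\rangle\,\d\omega
\]
and, analogously,
\[
  \int_0^\infty e^{-2\sigma t}\,\|R(\pa_t)w(t)\|^2\,\d t
   =\frac{1}{2\pi}\int_{\R}\|R(\sigma+\iu\omega)\widehat w(\sigma+\iu\omega)\|^2\,\d\omega.
\]

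With these identities the implication (i)$\Rightarrow$(ii) is immediate: applying the pointwise inequality (i) at $s=\sigma+\iu\omega$ to the vector $\widehat w(\sigma+\iu\omega)\in\calH$ for each $\omega$, and integrating over $\omega\in\R$, turns the frequency-domain right-hand sides above into the claimed inequality. For the converse (ii)$\Rightarrow$(i) I would argue by frequency concentration: fixing $\omega_0\in\R$ and $w_0\in\calH$, I would choose a sequence of admissible causal test functions whose transforms $\widehat w(\sigma+\iu\omega)$ concentrate, as measures on the $\omega$-line, around $\omega_0$ with limiting value $w_0$; feeding these into the two Parseval identities and using the continuity of $s\mapsto B(s),R(s)$ then forces the pointwise inequality (i) at $s=\sigma+\iu\omega_0$. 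Since $\omega_0$ and $w_0$ are arbitrary, this gives (i) on the critical line $\re s=\sigma$.

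The main obstacle is twofold. First, the construction of the concentrating test functions must respect causality and the vanishing-initial-data constraint while still realizing an essentially arbitrary boundary vector $w_0$ at a prescribed frequency; this is where the regularity index $m$ and the growth exponent $\mu$ genuinely enter, and it must be carried out with care rather than waved through. Second, and this is the true Herglotz content, the Plancherel identity only sees $B$ and $R$ on the single line $\re s=\sigma$, so passing from the line inequality to the full half-plane $\re s>\sigma$ asserted in (i) cannot be elementary. Here I would invoke the analyticity of $B$ and $R$ on $\re s>\sigma$ together with their polynomial growth, which places the scalar functions $s\mapsto\langle w_0,B(s)w_0\rangle$ in a Nevanlinna-type class, so that the operator-valued Herglotz representation propagates boundary positivity inward. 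The delicate point is that the weight $\|R(s)w_0\|^2$ is not the real part of an analytic function, so the representation must be applied to $B$ while the $R$-term is controlled separately; handling this coupling, rather than the Parseval bookkeeping, is where the real work lies.
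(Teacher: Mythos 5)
Your plan contains a genuine gap, and it sits exactly where you suspected the ``real work'' lies --- but the step is not merely delicate, it is false as proposed. Positivity on the single line $\re s=\sigma$ together with analyticity and polynomial growth does \emph{not} propagate into the half-plane. Concretely, take $\calH=\mathbb{C}$, $R(s)\equiv 0$, and $B(s)=-(s-\sigma)^2$: this family is entire and satisfies $|B(s)|\le 4|s|^2$ for $\re s\ge\sigma$, and on the line one has $\re B(\sigma+\iu\omega)=\omega^2\ge 0$, so by your own Parseval identity the untruncated inequality in (ii) holds for every admissible $w$; yet $\re B(\sigma+x)=-x^2<0$ for real $x>0$, so (i) fails in the interior of the half-plane. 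The Herglotz/Nevanlinna machinery cannot repair this: membership in that class \emph{presupposes} nonnegative real part in the open half-plane, which is precisely what you are trying to prove, and there is no Poisson-type representation recovering a polynomially growing harmonic function in a half-plane from its values on the boundary line (the same $-(s-\sigma)^2$ is the standard counterexample). So your (ii)$\Rightarrow$(i) cannot be completed; in fact the example shows that under the literal untruncated reading of (ii) --- the only reading your two Parseval identities address --- the asserted equivalence is not a true statement.

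The resolution is the quantifier your proof ignores: the trailing ``for all $t\ge 0$'' in (ii). In the cited source (\cite{leapfrog}, Lemma~2.2 --- note the present paper gives no proof at all, it only quotes the result), and in the only way the lemma is used here (Lemma~\ref{lemma: time-cont coercivity} has integrals $\int_0^T$), the inequality in (ii) is required for the \emph{truncated} integrals $\int_0^t$ for every $t\ge 0$. This changes both directions. For (ii)$\Rightarrow$(i), truncated positivity with the single weight $\sigma$ gives untruncated positivity for every $\sigma'>\sigma$ by averaging over truncation times, since $e^{-2(\sigma'-\sigma)t}=\int_t^\infty 2(\sigma'-\sigma)e^{-2(\sigma'-\sigma)T}\,\d T$ turns $\int_0^\infty e^{-2\sigma' t}(\cdot)\,\d t$ into a positive superposition of the truncated integrals; then your concentration argument on each line $\re s=\sigma'$ covers the whole half-plane, and no inward propagation is ever needed. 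Conversely, (i)$\Rightarrow$(ii) is then no longer a Parseval computation: Plancherel only sees the integral over all of $[0,\infty)$, and causality does not allow you to discard the tail $\int_t^\infty$, which has no sign. This truncated (passivity) statement is the true Herglotz content of the lemma: one uses the Herglotz integral representation of the positive-real family $B(s)$ to write $B(\pa_t)$ as a positive superposition of elementary causal operators (differentiation, constants, oscillatory integrators $w\mapsto\int_0^t e^{\iu\omega(t-\tau)}w(\tau)\,\d\tau$), each of which is positive on truncated integrals by an integration-by-parts identity with a favorable boundary term at time $t$. In short, the representation theorem is needed in the forward direction, not, as in your plan, in the backward one, and the backward direction needs the truncation structure rather than analytic continuation.
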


\subsection{Calderon operator for Maxwell's equations}
\label{subsection: Calderon op for Maxwell's eqn}

Consider the second order formulation of Maxwell's equations in three dimensions:
\begin{align*}
\eps\mu \pa_t^2 \E + \curl \curl \E =&\ \dot\J & \qin & \R^3 \times [0,T], \\
\E(x,0) =&\ \E_0 & \qin & \R^3, \\
\pa_t \E(x,0) =&\ \H_0 & \qin & \R^3.
\end{align*}
Let $\Om\subset\R^3$ be a bounded Lipschitz domain, with boundary $\Ga$, and further assume that the initial values and $\dot\J$ are supported within $\Om$.

We rewrite this problem as an interior problem over $\Om$:
\begin{align*}
\eps\mu \pa_t^2 \E^- + \curl \curl \E^- =&\ \dot\J & \qin & \Om \times [0,T], \\
\E^-(x,0) =&\ \E_0 & \qin & \Om, \\
\pa_t \E^-(x,0) =&\ \H_0 & \qin & \Om,
\end{align*}
and as an exterior problem over $\Om^+ = \R^3 \setminus \overline{\Om}$:
\begin{align*}
\eps\mu \pa_t^2 \E^+ + \curl \curl \E^+ =&\ 0 & \qin & \Om^+ \times [0,T], \\
\E^+(x,0) =&\ 0 & \qin & \Om^+, \\
\pa_t \E^+(x,0) =&\ 0 & \qin & \Om^+.
\end{align*}
The two problems are \emph{coupled} by the transmission conditions:
\begin{equation*}
\ga_T^- \E^- = \ga_T^+ \E^+ \andquad \ga_N^- \E^- = \ga_N^+ \E^+ .
\end{equation*}

Using the temporal convolution operators of Section~\ref{subsec:herglotz}, the solution of the exterior problem is given as
\begin{equation*}
\E^+ = \calS(\pa_t)\vphi + \calD(\pa_t)\psi,
\end{equation*}
with boundary densities
\begin{equation*}
\vphi = -\ga_N^+ \E^+ \andquad \psi = -\ga_T^+ \E^+,
\end{equation*}
which satisfy the equation
\begin{equation*}
B(\pa_t)\vect{\vphi}{\psi} = \frac{ \C }{2} \vect{\ga_T^- \E^-}{-\ga_N^- \E^-} .
\end{equation*}
Here $B(\pa_t)$ is the temporal convolution operator with the distribution whose Laplace transform is the Calderon operator $B(s)$ defined in \eqref{def-B}.

\subsection{First order formulation}

From now on, we use Maxwell's equations in their first order formulation 
on the interior domain $\Om$ (and we omit the omnipresent superscript $^-$):
\begin{equation}
\label{eq: first order Maxwell}
\begin{aligned}
\eps \pa_t\E =&\ \curl \H + \J \\
\mu \pa_t\H =&\ -\curl \E
\end{aligned}
\qin \Om \times [0,T] ,
\end{equation}
with the coupling through the Calderon operator as
\begin{equation*}
B(\pa_t) \vpp
= \frac{ \C }{2} \vect{\ga_T \E}{-\ga_N \E} ,
\end{equation*}
where $\vphi=-\ga_N \E$ and $\psi=-\ga_T \E$. In addition, by $-\mu\pa_t\H = \curl \E$ we obtain
\begin{equation}
\label{eq: boundary term equalities}
\begin{aligned}
\vphi =&\ -\ga_N \E =  -\ga_T (\pa_t\inv \curl \E)
= \mu \ga_T \H\\
\psi =&\ -\ga_T \E ,
\end{aligned}
\end{equation}
where we also used \eqref{eq: Laplace domain boundary term equalities}. Hence, $-\ga_N \E = \mu \ga_T \H$ and

\begin{equation*}
B(\pa_t) \vpp
= \frac{ \C }{2} \vect{\ga_T \E}{-\ga_N \E}
= \Half \vect{\mu\inv \ga_T \E}{\ga_T \H} .
\end{equation*}

\subsection{Coercivity of the time-dependent Calderon operator}

In the same way as in \cite[Lemma 4.1]{leapfrog} for the acoustic wave equation, the coercivity of the Calderon operator $B(s)$ for the time-harmonic Maxwell's equation as given by Lemma~\ref{lemma: coercivity} together with the operator-valued continuous-time Herglotz theorem as stated in Lemma 2.3 yields coercivity of the time-dependent Calderon operator $B(\partial_t)$.

\begin{lemma}
	\label{lemma: time-cont coercivity}
	With the constant $\beta>0$ from Lemma~\ref{lemma: coercivity} we have that
	\begin{align*}
	&\ \int_0^T e^{-2t/T} \biggl\la \vect{\vphi(\cdot,t)}{\psi(\cdot,t)}, B(\pa_t) \vect{\vphi(\cdot,t)}{\psi(\cdot,t)} \biggr\ra_\TG \d t \\
	&\ \geq \beta c_T \int_0^T e^{-2t/T} \Big( (\eps\mu)\inv \|\pa_t\inv \vphi(\cdot,t)\|_{\Hdiv}^2 + \|\pa_t\inv \psi(\cdot,t)\|_{\Hdiv}^2 \Big) \d t
	\end{align*}
	for arbitrary $T>0$ and for all $\vphi \in C^4([0,T],\Hdiv)$ and all $\psi \in C^4([0,T],\Hdiv)$ with $\vphi(\cdot,0)=\pa_t\vphi(\cdot,0)=\pa_t^2\vphi(\cdot,0)=\pa_t^3\vphi(\cdot,0)=0$ and $\psi(\cdot,0)=\pa_t\psi(\cdot,0)=\pa_t^2\psi(\cdot,0)=\pa_t^3\psi(\cdot,0)=0$, and with constant $c_T = \m{T\inv}$. 
\end{lemma}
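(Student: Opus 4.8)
The plan is to obtain the statement as a direct application of the operator-valued Herglotz theorem, Lemma~\ref{lemma: Herglotz}, which is precisely the device for converting the Laplace-domain coercivity of Lemma~\ref{lemma: coercivity} into a time-domain estimate. I would work in the complex Hilbert space $\calH = \Hdiv\times\Hdiv$, equipped with the anti-duality $\la\cdot,\cdot\ra_\TG$ acting componentwise and with the product norm $\|\vect{\vphi}{\psi}\|^2 = \|\vphi\|_{\Hdiv}^2 + \|\psi\|_{\Hdiv}^2$, and set $w=\vect{\vphi}{\psi}$. To match the exponential weight $e^{-2t/T}$ of statement (ii) in Lemma~\ref{lemma: Herglotz}, I would fix the abscissa $\sigma = T\inv$.

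The first ingredient is to recast the right-hand side of Lemma~\ref{lemma: coercivity} in the form $\beta\|R(s)w\|^2$ demanded by hypothesis (i). For this I would introduce the \emph{analytic} family
\[
    R(s)\vect{\vphi}{\psi} = s\inv \vect{(\eps\mu)^{-1/2}\vphi}{\psi},
\]
so that $\|R(s)w\|^2 = (\eps\mu)\inv\|s\inv\vphi\|_{\Hdiv}^2 + \|s\inv\psi\|_{\Hdiv}^2$ reproduces exactly the bracketed quantity in Lemma~\ref{lemma: coercivity}. The delicate point, which I expect to be the conceptual crux, is the prefactor $\m{s}=\min\{1,|s|^2\eps\mu\}\,\re s$: this is \emph{not} analytic in $s$ and hence cannot be absorbed into $R(s)$. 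The remedy is to replace it by a constant lower bound valid on the whole closed half-plane $\re s \ge \sigma = T\inv$. Indeed, for such $s$ one has $|s| \ge \re s \ge T\inv$, and since $r \mapsto \min\{1, r^2\eps\mu\}$ is nondecreasing together with $\re s \ge T\inv$, this yields
\[
    \m{s} \ge \min\{1, |T\inv|^2\eps\mu\}\, T\inv = \m{T\inv} = c_T .
\]
Thus Lemma~\ref{lemma: coercivity} implies $\re\la w, B(s)w\ra_\TG \ge \beta c_T\,\|R(s)w\|^2$ for all $\re s \ge T\inv$, which is exactly hypothesis (i) of Lemma~\ref{lemma: Herglotz} with coercivity constant $\beta c_T$.

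It then remains to verify the structural assumptions of Lemma~\ref{lemma: Herglotz} and to read off the conclusion. By Lemma~\ref{lemma: int operator s bounds} the blocks $V(s),K(s)$, and therefore $B(s)$, are bounded by $C(\sigma)|s|^2$, while $R(s)=O(|s|\inv)$ is likewise $O(|s|^2)$ on $\re s \ge \sigma$; hence the uniform bounds hold with growth exponent $\mu = 2$. The admissible kernel orders are then $m > \mu+1 = 3$, and the choice $m = 4$ explains both the required $C^4$ regularity of $\vphi,\psi$ and the vanishing of their derivatives up to order three. Statement (ii) of Lemma~\ref{lemma: Herglotz}, applied with $\sigma = T\inv$ and with $R(\pa_t)\vect{\vphi}{\psi} = \pa_t\inv\vect{(\eps\mu)^{-1/2}\vphi}{\psi}$, then delivers the claimed inequality with constant $\beta c_T$, since $\|R(\pa_t)w\|^2 = (\eps\mu)\inv\|\pa_t\inv\vphi\|_{\Hdiv}^2 + \|\pa_t\inv\psi\|_{\Hdiv}^2$.

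The one piece of bookkeeping I still have to settle is the passage from the half-line integral in Lemma~\ref{lemma: Herglotz}(ii) to the integral over $[0,T]$ in the statement; I would handle this by extending $\vphi,\psi$ to $C^4$ functions on $[0,\infty)$ with the same vanishing initial data and invoking the causality of the convolution operators $B(\pa_t)$ and $\pa_t\inv$, exactly as in \cite[Lemma~4.1]{leapfrog}. I regard this localization as routine; the genuine difficulty, and the only step that is specific to the Maxwell setting, lies in the treatment of the non-analytic weight $\m{s}$ and the identification of $c_T=\m{T\inv}$ as its correct constant lower bound on the line $\re s = T\inv$. Everything else is a transcription of the abstract Herglotz machinery to the trace space $\big(\Hdiv,\la\cdot,\cdot\ra_\TG\big)$.
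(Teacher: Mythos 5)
Your proposal is correct and takes essentially the same route as the paper: the paper's own proof is exactly the one-sentence combination of Lemma~\ref{lemma: coercivity} with the operator-valued Herglotz theorem (Lemma~\ref{lemma: Herglotz}), carried out ``as in \cite[Lemma~4.1]{leapfrog}'', which is what you have fleshed out. Your details --- the choice $\calH=\Hdiv\times\Hdiv$ with the anti-symmetric self-duality, the analytic family $R(s)$ reproducing the right-hand side, the abscissa $\sigma=T\inv$, the order $m=4$ forced by the $O(|s|^2)$ bounds of Lemma~\ref{lemma: int operator s bounds}, the lower bound $\m{s}\ge\m{T\inv}=c_T$ on $\re s\ge T\inv$, and the causality-based restriction to $[0,T]$ --- coincide with the argument the paper delegates to that reference.
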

%
%

A Gronwall argument then yields the following energy estimate; see \cite[Lemma~4.2]{leapfrog}.
\begin{lemma}
	\label{lemma: general energy-like est}
	Let the functions $\,\calE:[0,T] \to [0,\infty)$, $\calF:[0,T] \to \R$, and $\vphi,\psi \in C^2([0,T], \Hdiv)$ with $\vphi(\cdot,0)=\pa_t\vphi(\cdot,0)=\pa_t^2\vphi(\cdot,0)=\pa_t^3\vphi(\cdot,0)=0$, $\psi(\cdot,0)=\pa_t\psi(\cdot,0)=\pa_t^2\psi(\cdot,0)=\pa_t^3\psi(\cdot,0)=0$, be such that for all $t\in[0,T]$
	\begin{equation*}
	\dot \calE(t) + \biggl\la \vect{\vphi(\cdot,t)}{\psi(\cdot,t)}, B(\pa_t) \vect{\vphi(\cdot,t)}{\psi(\cdot,t)} \biggr\ra_\TG = \calF(t).
	\end{equation*}
	Then, with $c_T = \m{T\inv}$,
	\begin{equation}
	\label{eq: energy estimate}
	\begin{aligned}
	\calE(T) + &\ \beta c_T \int_0^T \e^{-2t/T} \Big( (\eps\mu)\inv \|\pa_t\inv \vphi(\cdot,t)\|_{\Hdiv}^2 + \|\pa_t\inv \psi(\cdot,t)\|_{\Hdiv}^2 \Big) \d t
	\\
	&\ \leq \e^2\,\calE(0) + \int_0^T \e^{2(1-t/T)} \calF(t) \d t.
	\end{aligned}
	\end{equation}
\end{lemma}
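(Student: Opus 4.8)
The plan is to run a weighted energy argument: test the given differential identity against the exponential weight $\e^{-2t/T}$, integrate over $[0,T]$, and then invoke the time-continuous coercivity of Lemma~\ref{lemma: time-cont coercivity} to control the Calderon term from below. The actual analytic content has already been packaged into that coercivity estimate, so what remains is bookkeeping.

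First I would multiply the identity $\dot\calE(t) + \la \vect{\vphi}{\psi}, B(\pa_t)\vect{\vphi}{\psi}\ra_\TG = \calF(t)$ by $\e^{-2t/T}$ and integrate in $t$ from $0$ to $T$. The only term requiring manipulation is the one containing $\dot\calE$, for which integration by parts gives
\[
\int_0^T \e^{-2t/T}\dot\calE(t)\,\d t = \e^{-2}\calE(T) - \calE(0) + \frac2T\int_0^T \e^{-2t/T}\calE(t)\,\d t .
\]
The crucial observation is that, since $\calE$ takes values in $[0,\infty)$, the last integral is nonnegative and may simply be discarded, which yields the one-sided inequality
\[
\e^{-2}\calE(T) + \int_0^T \e^{-2t/T}\la\vect{\vphi}{\psi},B(\pa_t)\vect{\vphi}{\psi}\ra_\TG\,\d t \leq \calE(0) + \int_0^T \e^{-2t/T}\calF(t)\,\d t .
\]

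Next I would bound the Calderon term from below. The assumed vanishing of $\vphi,\psi$ through their third derivatives are precisely the hypotheses of Lemma~\ref{lemma: time-cont coercivity}, whose conclusion is already stated in the weighted form with constant $c_T = \m{T\inv}$, so it applies directly to the integral above. Substituting this lower bound and then multiplying through by $\e^2$ puts $\calE(T)$ on the left with no prefactor; the right-hand side becomes $\e^2\calE(0) + \int_0^T \e^{2(1-t/T)}\calF(t)\,\d t$ because $\e^2\e^{-2t/T} = \e^{2(1-t/T)}$, while the dissipation integral now carries a factor $\e^2\geq 1$. Since that integral is nonnegative, replacing its prefactor $\e^2$ by $1$ only weakens the left-hand side, producing exactly \eqref{eq: energy estimate}.

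The argument is routine once the coercivity input is available, so there is no serious obstacle; the only point demanding care is the sign bookkeeping in the integration by parts. It is precisely the nonnegativity of $\calE$ that renders the weighted term $\frac2T\int_0^T \e^{-2t/T}\calE\,\d t$ discardable rather than problematic — this is the role of the exponential weight, which turns what would otherwise require a genuine Gronwall iteration into a one-step estimate. A secondary, purely technical remark is the mild regularity gap between the stated $C^2$ hypothesis and the $C^4$ regularity invoked by Lemma~\ref{lemma: time-cont coercivity} (already implicit in requiring $\pa_t^3\vphi(\cdot,0)$ and $\pa_t^3\psi(\cdot,0)$ to vanish); this is handled exactly as in \cite[Lemma~4.2]{leapfrog}.
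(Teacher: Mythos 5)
Your proof is correct and follows essentially the same route as the paper, which defers to \cite[Lemma~4.2]{leapfrog}: there too the identity is multiplied by the weight $\e^{-2t/T}$, integrated by parts in the $\dot\calE$ term (discarding the nonnegative contribution $\frac2T\int_0^T \e^{-2t/T}\calE\,\d t$), and the Calderon term is bounded below by the time-continuous coercivity of Lemma~\ref{lemma: time-cont coercivity}, after which multiplying by $\e^2$ and using $\e^2\ge 1$ on the dissipation term gives \eqref{eq: energy estimate}. Your closing remark on the $C^2$ versus $C^4$ regularity mismatch is also apt, as that discrepancy is inherited from the statement itself.
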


\subsection{Weak formulation and energy estimate}
Analogously to  \cite{abboud2011coupling,leapfrog}, a symmetric weak form of \eqref{eq: first order Maxwell} is obtained on using
\begin{equation*}
(\curl u , v) = \half (\curl u , v) + \half (u , \curl v) - \half \la \ga_T u , \ga_T v \ra_\TG ,
\end{equation*}
and using \eqref{eq: boundary term equalities} for the boundary term. Here $(\cdot,\cdot)$ denotes the standard $L^2(\Om)^3$ inner product.

The coupled weak problem then reads: find $\E, \H \in H(\curl,\Om)$ and $\vphi, \psi \in \Hdiv$ such that
\begin{equation}
\label{eq: weak formulation}
\begin{aligned}
&\ (\eps \pa_t\E,w) = \half (\curl \H , w) + \half (\H , \curl w)
- \half \la \ga_T \H , \ga_T w \ra_\TG + (\J,w) \\
&\ \hphantom{(\eps \pa_t\E,w)} = \half (\curl \H , w) + \half (\H , \curl w)
- \half \la \mu\inv \vphi , \ga_T w \ra_\TG + (\J,w) , \\
&\ (\mu \pa_t\H,z) = -\half (\curl \E , z) - \half (\E , \curl z)
+ \half \la \ga_T \E , \ga_T z \ra_\TG \\
&\ \hphantom{(\mu \pa_t\H,z)} = -\half (\curl \E , z) - \half (\E , \curl z)
- \half \la \psi , \ga_T z \ra_\TG  ,\\
&\ \biggl\la \vect{\xi}{\eta}, B(\pa_t) \vpp \biggr\ra_\TG = \half \Big( \la\xi,\mu\inv \ga_T \E\ra_\TG + \la \eta,\ga_T \H \ra_\TG \Big)
\end{aligned}
\end{equation}
hold for arbitrary $w,z \in H(\curl,\Om)$, and $\xi, \eta \in \Hdiv$.

While this weak formulation is apparently non-standard for Maxwell's equations, we will see that it is extremely useful, in the same way as the analogous formulation proved to be for the acoustic case in \cite{abboud2011coupling,leapfrog}.

Testing with $w=\E$, $z=\H$ and $\xi = \vphi$, $\eta=\psi$ in \eqref{eq: weak formulation}, by using \eqref{eq: boundary term equalities} we obtain
\begin{equation*}
\begin{aligned}
&\ (\eps \pa_t\E,\E) = \half (\curl \H , \E) + \half (\H , \curl \E)
- \half \la \mu\inv \vphi , \ga_T \E \ra_\TG + (\J,\E) , \\
&\ (\mu \pa_t\H,\H) = -\half (\curl \E , \H) - \half (\E , \curl \H)
- \half \la \psi , \ga_T \H \ra_\TG  ,\\
&\ \biggl\la \vect{\vphi}{\psi}, B(\pa_t) \vpp \biggr\ra_\TG = \half \Big( \la\vphi,\mu\inv \ga_T \E\ra_\TG + \la \psi,\ga_T \H \ra_\TG \Big) ,
\end{aligned}
\end{equation*}
and summing up the three equations yield
\begin{equation*}
\diff \Big(\frac\eps2 \|\E\|_{L^2(\Om)^3}^2 +  \frac\mu2 \|\H\|_{L^2(\Om)^3}^2 \Big) + \biggl\la \vpp , B(\pa_t)\vpp \biggr\ra_\TG = (\J,\E)  .
\end{equation*}
For $\J=0$,  the coercivity of the continuous-time Calderon operator, as stated in Lemmas~\ref{lemma: time-cont coercivity} and~\ref{lemma: general energy-like est}, yields that the electromagnetic energy
\begin{equation*}
\calE(t) = \frac\eps2 \|\E(\cdot,t)\|_{L^2(\Om)^3}^2 + \frac\mu2 \|\H(\cdot,t)\|_{L^2(\Om)^3}^2 ,
\end{equation*}
satisfies the energy estimate \eqref{eq: energy estimate} (with $\calF=0$) for arbitrary $T>0$.

\section{Discretization}
\label{section: discretization}

\subsection{Space discretization: dG and BEM}
\label{subsection: dG}

For the spatial discretization we use, as an example, the central flux discontinuous Galerkin (dG) discretization from \cite{HochbruckSturm} (see also \cite{DiPietroErn,HesthavenWarburton}) in the interior and continuous linear boundary elements on the surface.

We triangulate the bounded polyhedral domain $\Om$ by simplicial triangulations $\calT_h$, where $h$ denotes the maximal element diameter.  For our theoretical results we consider a quasi-uniform and
contact-regular family of such triangulations with $h\to0$; see e.g.~\cite{DiPietroErn} for these notions.
We adopt the following notation from \cite[Section~2.3]{HochbruckSturm}: The faces $\calF_h$ of $\calT_h$, decomposed into boundary and interior faces: $\calF_h = \Fb \cup \Fi$. The normal of an interior face $F\in\Fi$ is denoted by $\nu_F$. It is kept fixed and is the outward normal of one of the two neighbouring mesh elements. We denote by $K_F$ that neighbouring element into which $\nu_F$ is directed. The outer faces of $\calT_h$ are used as the triangulation of the boundary $\Gamma$.

The dG space of vector valued functions, which are elementwise linear in each component, is defined as
\begin{equation*}
V_h = \big\{ v_h \in L^2(\Om) \,;\, \ v_h|_K \textnormal{ is at most linear, for all } K \in \calT_h \big\}^3 \not\subset H(\curl,\Om).
\end{equation*}
The boundary element space $\Psi_h$ is taken as
\begin{equation*}
\Psi_h = \bigl\{ \chi_h \times \nu \,;\ \chi_h:\Gamma\to \R^3 \textnormal{ is piecewise linear and continuous}\bigr\} \subset \Hdiv.
\end{equation*}
The corresponding nodal basis functions are denoted by $(b_j^\Omega)$ and $(b_k^\Ga)$, respectively.
%
%
Jumps and averages over faces $F \in \Fi$ are denoted analogously as for trace operators on $\Ga$, see Section~\ref{subsection: boundary integral op}:
\begin{equation*}
\jp{w}_F = \ga_F^-w - \ga_F^+w \andquad \av{w}_F = \half(\ga_F^-w + \ga_F^+w),
\end{equation*}
where $\ga_F$ is the usual trace onto the face $F$. We often omit the subscript as it will always be clear from the context.

The discrete $\curl$ operator with centered fluxes was presented in \cite[Section~2.3]{HochbruckSturm}:
\begin{equation*}
\begin{aligned}
(\curl_h u_h , w_h) =&\ \sum_{K\in\calT_h} (\curl u_h , w_h)_K + \sum_{F\in\Fi} - \la \jp{u_h} , \av{w_h} \ra_{F} .
\end{aligned}
\end{equation*}
By the arguments of the proof of Lemma~2.2 in \cite{HochbruckSturm}, we obtain that the discrete curl operator satisfies the discrete version of Green's formula \eqref{eq: Green},
\begin{equation}
\label{eq: Green-h}
(\curl_h u_h , w_h) - ( u_h , \curl_h w_h) = \la \gamma_T u_h, \gamma_T w_h \ra_\Ga.
\end{equation}
%
The $\curl_h$ operator is well defined on $H(\curl,\Om)\cap H^1(\calT_h)^3$, with the broken Sobolev space
\begin{equation*}
H^k(\calT_h) =\big\{ v\in L^2(\Om) \,:\, \ v|_K \in H^k(K) \textnormal{ for all } K \in \calT_h  \big\} \qquad (k\in\N),
\end{equation*}
which is a Hilbert space with natural norm and seminorm $\|v_h\|_{H^k(\calT_h)}$ and $|v_h|_{H^k(\calT_h)}$, respectively.

Using the above  discrete $\curl$ operator, the semidiscrete problem reads as follows: Find $\E_h, \H_h \in V_h$ and $\vphi_h, \psi_h \in \Psi_h$ such that for all $w_h,z_h \in V_h$ and $\xi_h, \eta_h \in \Psi_h$,
\label{eq: semidiscrete problem}
\begin{align}
\nonumber
&\ (\eps \pa_t\E_h,w_h) = \half (\curl_h \H_h , w_h) + \half (\H_h , \curl_h w_h) - \half \la \mu\inv \vphi_h , \ga_T w_h \ra_\TG + (\J,w_h) , \\[1mm]
\label{eq:dg-bem}
&\ (\mu \pa_t\H_h,z_h) = -\half (\curl_h \E_h , z_h) - \half (\E_h , \curl_h z_h) - \half \la \psi_h , \ga_T z_h \ra_\TG  ,\\[2mm]
\nonumber
&\ \biggl\la \vect{\xi_h}{\eta_h}, B(\pa_t) \vect{\vphi_h}{\psi_h} \biggr\ra_\TG = \half \Big( \la\xi_h,\mu\inv \ga_T \E_h\ra_\TG + \la \eta_h,\ga_T \H_h \ra_\TG \Big).
\end{align}
All expressions are to be interpreted in a piecewise sense if necessary.

We collect the nodal values of the semidiscrete electric and magnetic field into the vectors $\bfE, \bfH$, and similarly the nodal vectors of the boundary densities are denoted by $\bfphi$ and $\bfpsi$. Upright boldface capitals always denote matrices of the discretization.

We obtain the following coupled system of ordinary differential equations and integral equations for the nodal values:
\begin{equation}
\label{eq: matrix formulation}
\begin{aligned}
&\ \eps\bfM \dot\bfE = -\bfD\bfH - \bfC_0 \bfphi + \bfM\bfJ , \\
&\ \mu\bfM \dot\bfH =  \bfD^T\bfE -\bfC_1\bfpsi  ,\\
&\ \bfB(\pa_t) \vect{\bfphi}{\bfpsi} = \vect{\bfC_0^T \bfE}{\bfC_1^T \bfH} .
\end{aligned}
\end{equation}
The matrix $\bfM$ denotes the symmetric positive definite mass matrix, while the other matrices are defined as
\begin{equation*}
\bfD|_{jj'} = -\half (\curl_h b_{j'}^\Omega , b_j^\Omega) - \half (b_{j'}^\Omega , \curl_h b_j^\Omega) ,
\end{equation*}
which happens to be a symmetric matrix, and
\begin{equation*}
\bfC_1|_{jk} = \half \la b_k^\Ga , \ga_T b_j^\Omega \ra_\TG ,  \qquad \bfC_0=\mu\inv \bfC_1 .
\end{equation*}

The matrix $\bfB(s)$ is given by
\begin{equation*} 
\bfB(s) = \mu\inv
\left(
\begin{array}{cc}
\bfV(s) & \bfK(s) \\
-\bfK(s) & -\bfV(s) \\
\end{array}
\right) ,
\end{equation*}
where the blocks have entries
\begin{equation*}
\bfV(s)|_{kk'} = \half \la b_{k'}^\Ga , V(s) b_k^\Ga \ra_\TG \andquad
\bfK(s)|_{kk'} = \half \la b_{k'}^\Ga , K(s) b_k^\Ga \ra_\TG .
\end{equation*}
For this matrix we have the following coercivity estimate.

\begin{lemma}
	\label{lemma: coercivity-discrete}
	With $\beta>0$ and $ \m{s}$ from Lemma~\ref{lemma: coercivity},  the matrix $\bfB(s)$ satisfies
	\begin{align*}
	\re  \vect{\bfphi}{\bfpsi}^* \bfB(s) \vect{\bfphi}{\bfpsi} 
	\geq \beta \,\m{s}  \Big( (\eps\mu)\inv (s\inv\bfphi)^* \bfM_\Ga  (s\inv\bfphi) + (s\inv\bfpsi)^* \bfM_\Ga  (s\inv\bfpsi) \Big)
	\end{align*}
	for $\re s>0$ and for all $\bfphi, \bfpsi \in \mathbb{C}^{N_{\Ga}}$, where the mass matrix $\bfM_\Ga$, for the inner product $(\cdot,\cdot)_\Ga$ corresponding to the norm \eqref{Hdiv-norm} on $\Hdiv$, is
	defined by $\bfM_\Ga |_{kk'} = (b_{k'}^\Ga, b_{k}^\Ga)_\Ga$.
\end{lemma}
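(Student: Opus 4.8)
The plan is to recognize $\bfB(s)$ and $\bfM_\Ga$ as the Galerkin representations, with respect to the nodal basis $(b_k^\Ga)$ of $\Psi_h$, of the continuous Calderon operator $B(s)$ and of the $\Hdiv$ inner product, and then to apply the continuous coercivity of Lemma~\ref{lemma: coercivity} directly. The decisive structural fact that makes this possible is the conformity $\Psi_h \subset \Hdiv$, which was built into the definition $\Psi_h = \{\chi_h \times \nu\}$ precisely for this purpose.

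First I would associate with the coefficient vectors $\bfphi, \bfpsi \in \mathbb{C}^{N_\Ga}$ the boundary element functions $\varphi_h = \sum_k \bfphi_k\, b_k^\Ga$ and $\psi_h = \sum_k \bfpsi_k\, b_k^\Ga$, which by conformity are genuine elements of $\Hdiv$. Next I would match the two sides of the asserted inequality to their continuous counterparts evaluated at $(\varphi_h,\psi_h)$. For the right-hand side this is immediate: since $\bfM_\Ga|_{kk'} = (b_{k'}^\Ga, b_k^\Ga)_\Ga$ is the mass matrix of the inner product $(\cdot,\cdot)_\Ga$ inducing the norm \eqref{Hdiv-norm}, one has $(s\inv\bfphi)^* \bfM_\Ga (s\inv\bfphi) = \|s\inv\varphi_h\|_{\Hdiv}^2$ and the analogous identity for $\bfpsi$. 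For the left-hand side I would expand $\vect{\bfphi}{\bfpsi}^* \bfB(s) \vect{\bfphi}{\bfpsi}$ using the block entries of $\bfV(s), \bfK(s)$ and identify it with $\la \vect{\varphi_h}{\psi_h}, B(s)\vect{\varphi_h}{\psi_h}\ra_\TG$, the anti-symmetric pairing acting componentwise and extended to complex arguments in accordance with the conjugate transpose $*$.

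Once both identifications are in place, the proof is immediate: taking real parts and applying Lemma~\ref{lemma: coercivity} to the particular functions $\varphi_h, \psi_h \in \Hdiv$ yields exactly the claimed bound, with the constant $\beta$ inherited from the continuous estimate. The only delicate point --- and the step I would treat as the main obstacle --- is the consistent bookkeeping in this identification of forms: one must correctly track the complex conjugation hidden in $\bfphi^*$ against the bilinearity of the pairing on the real-valued basis functions, the index transposition in the definitions of $\bfV(s)|_{kk'}$ and $\bfK(s)|_{kk'}$, and the scalar $\half$ appearing in the block entries, so that $\bfB(s)$ is verified to be the faithful representation of $B(s)$ and $\bfM_\Ga$ that of the $\Hdiv$ norm on $\Psi_h$. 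No genuine estimate is needed beyond Lemma~\ref{lemma: coercivity}; conformity of $\Psi_h$ does all the work, and the discrete coercivity is simply the restriction of the continuous one to the finite element subspace.
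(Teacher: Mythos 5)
Your proposal is correct and follows essentially the same route as the paper's own proof: both identify the coefficient vectors with the corresponding functions $\vphi_h,\psi_h\in\Psi_h\subset\Hdiv$, observe that $\vect{\bfphi}{\bfpsi}^* \bfB(s) \vect{\bfphi}{\bfpsi}$ and $\bfphi^*\bfM_\Ga\bfphi$ are the Galerkin representations of the continuous pairing with $B(s)$ and of the $\Hdiv$ norm, and then invoke Lemma~\ref{lemma: coercivity} on the conforming subspace. The bookkeeping issues you flag (conjugation, index transposition, the factor $\half$) are real but harmless, affecting at most the value of the constant, and the paper treats them just as implicitly as you do.
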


\begin{proof} The result follows from Lemma~\ref{lemma: coercivity} on noting that for the vectors
	$\bfphi = (\vphi_k)$ and $\bfpsi = (\psi_k)$ and the corresponding boundary functions in $\Psi_h\subset\Hdiv$
	$$
	\vphi_h = \sum_{k=1}^{N_\Ga} \vphi_k b_k^\Ga \quad\hbox{ and } \quad
	\psi_h = \sum_{k=1}^{N_\Ga} \psi_k b_k^\Ga,
	$$
	we have
	$$
	\vect{\bfphi}{\bfpsi}^* \bfB(s) \vect{\bfphi}{\bfpsi}  = \biggl\la \vect{\vphi_h}{\psi_h}, B(s) \vect{\vphi_h}{\psi_h} \biggr\ra_\TG
	$$
	and $\| \vphi_h \|_{\Hdiv} ^2 = \bfphi^* \bfM_\Ga \bfphi$.
	\qed
\end{proof}

Let us emphasize the following observation:
{\it The above matrix--vector formulation \eqref{eq: matrix formulation} is formally the same as the one for the acoustic wave equation in \cite[Section~5.1]{leapfrog}, with the same coercivity estimate for the boundary operator $\bfB(\pa_t)$ by Lemmas~\ref{lemma: coercivity-discrete} and~\ref{lemma: time-cont coercivity}.
	As an important consequence, the stability  results proven in \cite{leapfrog} hold for the present case as well.}

\begin{remark}
	The choice of a dG method in the interior and of continuous boundary elements for the spatial discretizations is not necessary for our analysis. Other space discretization methods, for instance the ones going back to Raviart and Thomas \cite{RaviartThomas}, N\'{e}d\'{e}lec \cite{Nedelec-mixed}, and many others, detailed in the excellent survey article \cite{Hiptmair-acta}, or locally divergence-free methods such as \cite{BrennerLiSung,CockburnLiShu}, could also be used as long as they yield a 
	matrix--vector formulation of the form \eqref{eq: matrix formulation} and a coercivity estimate as in Lemma~\ref{lemma: coercivity-discrete}.
\end{remark}


\subsection{Recap: Convolution quadrature}
\label{subsection: cq}

Following \cite[Section~2.3]{leapfrog} we give a short recap of convolution quadrature and  introduce some notation. For more details see  \cite{LubichCQ,Lubich-multistep,Lubich-revisited} and \cite{Banjai}.

Convolution quadrature (CQ) discretizes the convolution $B(\pa_t)w(t)$ by the discrete convolution
\begin{equation*}
(B(\pa_t^{\dt})w)(n\dt) = \sum_{j=0}^n B_{n-j} w(j\dt) ,
\end{equation*}
where the weights $B_n$ are defined as the coefficients of
\begin{equation*}
B\Big( \frac{\delta(\zeta)}{\dt} \Big) = \sum_{n=0}^\infty B_n \zeta^n .
\end{equation*}
In the present paper we  choose
\begin{equation*}
\delta(\zeta) = (1-\zeta) + \half(1-\zeta)^2 ,
\end{equation*}
which corresponds to the second-order backward difference formula.

From \cite{Lubich-multistep}, it is known that the method is of order two,
\begin{equation*}
\| (B(\pa_t)w)(t) - (B(\pa_t^{\dt})w)(t)\| = O(\dt^2), \textnormal{ uniformly in } t=n\dt\leq T,
\end{equation*}
for functions $w$ that are sufficiently smooth including their extension by $0$ to negative values of $t$.
An important property of this discretization is that it preserves the coercivity of the continuous-time convolution in the time discretization. We have the following result.

\begin{lemma} [\cite{leapfrog}, Lemma~2.3]
	\label{lemma: CQ coercivity}
	In the setting of Lemma~\ref{lemma: Herglotz} condition (i) implies, for $\sigma\dt >0$ small enough and with $ \rho = e^{-\sigma\dt} + O(\dt^2)$,
	\begin{equation*}
	\sum_{n=0}^\infty \rho^{2n} \re \langle w(n\dt),B(\pa_t^{\dt})w(n\dt)\rangle \geq \gamma \sum_{n=0}^\infty \rho^{2n} \|R(\pa_t^{\dt})w(n\dt)\|^2,
	\end{equation*}
	for any function $w:[0,\infty) \to V$ with finite support.
\end{lemma}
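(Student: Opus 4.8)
The plan is to exploit the fact that convolution quadrature turns the temporal convolution into a \emph{product} of generating functions, and to combine this with a Plancherel identity on a circle $|\zeta|=\rho$ together with the pointwise estimate in condition (i) of Lemma~\ref{lemma: Herglotz}. Writing $w_n=w(n\dt)$, the defining relation of the CQ weights $B\big(\delta(\zeta)/\dt\big)=\sum_n B_n\zeta^n$ gives, by the Cauchy product,
\begin{equation*}
    \sum_{n=0}^\infty (B(\pa_t^{\dt})w)(n\dt)\,\zeta^n = B\Big(\tfrac{\delta(\zeta)}{\dt}\Big)\,\widehat w(\zeta), \qquad \widehat w(\zeta)=\sum_{n=0}^\infty w_n\zeta^n,
\end{equation*}
and likewise for $R$ in place of $B$. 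Since $w$ has finite support, $\widehat w$ is a polynomial with values in $\calH$, so all the series below are finite sums and there is no convergence issue.

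First I would record the Plancherel identity: for sequences with generating functions $\widehat a,\widehat c$ one has $\sum_n \rho^{2n}\langle a_n,c_n\rangle=\frac{1}{2\pi}\int_0^{2\pi}\langle \widehat a(\rho e^{\iu\theta}),\widehat c(\rho e^{\iu\theta})\rangle\,\d\theta$, which is immediate from the orthogonality of $e^{\iu n\theta}$ on $[0,2\pi]$ and holds for the anti-duality pairing because $\rho$ is real (the factors $\rho^m,\rho^n$ combine to $\rho^{2n}$ regardless of conjugation). Applying it with $a_n=w_n$ and $c_n=(B(\pa_t^{\dt})w)(n\dt)$ yields
\begin{equation*}
    \sum_{n=0}^\infty \rho^{2n}\,\re\langle w_n,(B(\pa_t^{\dt})w)(n\dt)\rangle = \frac{1}{2\pi}\int_0^{2\pi}\re\Big\langle \widehat w(\rho e^{\iu\theta}),\,B(s_\theta)\,\widehat w(\rho e^{\iu\theta})\Big\rangle\,\d\theta,
\end{equation*}
where $s_\theta=\delta(\rho e^{\iu\theta})/\dt$. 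Provided $\re s_\theta\ge\sigma$ for all $\theta$, condition (i) applies pointwise with $s=s_\theta$ and $w=\widehat w(\rho e^{\iu\theta})$, bounding the integrand from below by $\beta\|R(s_\theta)\widehat w(\rho e^{\iu\theta})\|^2$; a second application of Plancherel, now for $R$, turns this lower bound back into $\beta\sum_n\rho^{2n}\|R(\pa_t^{\dt})w(n\dt)\|^2$, giving the claim with $\gamma=\beta$.

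The main obstacle, and the only place where the choice of $\delta$ and of $\rho$ enters, is to guarantee $\re s_\theta=\re\delta(\rho e^{\iu\theta})/\dt\ge\sigma$ for every $\theta$; this is exactly where the A-stability of the BDF2 generating function is used. For $\delta(\zeta)=\tfrac32-2\zeta+\tfrac12\zeta^2$ a direct computation with $c=\cos\theta$ gives $\re\delta(\rho e^{\iu\theta})=\rho^2c^2-2\rho c+(\tfrac32-\tfrac12\rho^2)$, a parabola in $c$ whose vertex $c=1/\rho>1$ lies outside $[-1,1]$; hence the minimum over $\theta$ is attained at $\theta=0$ and equals $\tfrac12(1-\rho)(3-\rho)$. (Equivalently, $\re\delta$ is harmonic, so its minimum over $|\zeta|\le\rho$ sits on the boundary; this also secures convergence of the operator series $\sum_n B_n\zeta^n=B(\delta(\zeta)/\dt)$ on the closed disc, via the polynomial bound on $B$.) Imposing $\tfrac12(1-\rho)(3-\rho)\ge\sigma\dt$ and expanding shows that $\rho=e^{-\sigma\dt}$ gives $\tfrac12(1-\rho)(3-\rho)=\sigma\dt-\tfrac13(\sigma\dt)^3+O(\dt^4)$, just short of the target; since the left-hand side is decreasing in $\rho$, it suffices to lower $\rho$ by an $O(\dt^3)$ amount, so that $\rho=e^{-\sigma\dt}+O(\dt^2)$ achieves $\re s_\theta\ge\sigma$ for all $\theta$ once $\sigma\dt$ is small enough. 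This closes the argument.
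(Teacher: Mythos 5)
Your proof is correct and takes essentially the same route as the source this paper cites for the lemma (\cite{leapfrog}, Lemma~2.3, which the paper quotes without reproving): discrete Plancherel on the circle $|\zeta|=\rho$, pointwise application of condition (i) at $s_\theta=\delta(\rho e^{\iu\theta})/\dt$, and the BDF2 A-stability computation $\min_{\theta}\re\delta(\rho e^{\iu\theta})=\tfrac12(1-\rho)(3-\rho)$, which after an $O(\dt^3)$ decrease of $\rho$ below $e^{-\sigma\dt}$ guarantees $\re s_\theta\ge\sigma$ and hence the claim with $\gamma=\beta$. Your vertex argument, the expansion $\tfrac12(1-\rho)(3-\rho)=\sigma\dt-\tfrac13(\sigma\dt)^3+O(\dt^4)$, and the resulting adjustment of $\rho$ (well within the stated $e^{-\sigma\dt}+O(\dt^2)$) all match the standard proof.
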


\subsection{Coercivity of the time-discretized Calderon operator}

Combining Lemma~\ref{lemma: coercivity} and  Lemma~\ref{lemma: CQ coercivity} yields the following coercivity property of the CQ time-discretization of the time-dependent Calderon operator considered in Lemma~\ref{lemma: time-cont coercivity}.

\begin{lemma}
	\label{lemma: time-discrete coercivity} In the situation of Lemma~\ref{lemma: time-cont coercivity}, we have for $N\dt =T$ and $0<\dt\le\dt_0$  that
	\begin{align*}
	&\ \dt\sum_{n=0}^N e^{-2t_n/T} \biggl\la \vect{\vphi(\cdot,t_n)}{\psi(\cdot,t_n)}, B(\pa_t^{\dt}) \vect{\vphi}{\psi} (\cdot,t_n)\biggr\ra_\TG  \\
	&\ \geq \beta c_T  \dt\sum_{n=0}^N e^{-2t_n/T} \Big( (\eps\mu)\inv \|(\pa_t^\dt)\inv \vphi(\cdot,t_n)\|_{\Hdiv}^2 + \|(\pa_t^\dt)\inv \psi(\cdot,t_n)\|_{\Hdiv}^2 \Big)
	\end{align*}
	for all sequences $(\vphi(\cdot,t_n))_{n=0}^N$ and  $(\psi(\cdot,t_n))_{n=0}^N$ in $\Hdiv$, with $c_T = c\,\m{T\inv}$ for a $c>0$ (which depends only on $\dt_0$ and tends to 1 as $\dt_0$ goes to zero). 
\end{lemma}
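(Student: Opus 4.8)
The plan is to combine the Laplace-domain coercivity of Lemma~\ref{lemma: coercivity} with the convolution-quadrature coercivity transfer of Lemma~\ref{lemma: CQ coercivity}, by casting the statement into the abstract Hilbert-space framework of Lemma~\ref{lemma: Herglotz}. The essential observation is that Lemma~\ref{lemma: coercivity} is precisely condition~(i) of that framework, once we make the right identifications, and then Lemma~\ref{lemma: CQ coercivity} delivers the discrete-time inequality almost verbatim.

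**First** I would set up the abstract data. Take $\calH = \Hdiv\times\Hdiv$ with the anti-duality $\la\cdot,\cdot\ra_\TG$ acting componentwise, and the operator $B(s)$ of \eqref{def-B}. For the right-hand side operator I would define $R(s)$ so that $\|R(s)w\|^2$ reproduces the quadratic form on the right of Lemma~\ref{lemma: coercivity}; concretely, for $w=\vect{\vphi}{\psi}$ put
\begin{equation*}
    \|R(s)w\|^2 = (\eps\mu)\inv \|s\inv\vphi\|_{\Hdiv}^2 + \|s\inv\psi\|_{\Hdiv}^2 ,
\end{equation*}
so that $R(s)$ is simply $s\inv$ times a fixed (diagonal, $s$-independent) scaling operator. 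With these choices Lemma~\ref{lemma: coercivity} states exactly that
\begin{equation*}
    \re \la w, B(s) w\ra_\TG \geq \beta\,\m{s}\,\|R(s)w\|^2 ,
\end{equation*}
which is condition~(i) of Lemma~\ref{lemma: Herglotz} with the coercivity constant $\beta\,\m{s}$ in place of a constant $\beta$. To fit the $s$-independent-constant hypothesis, I would factor out $\m{T\inv}$: since for the relevant range of $s$ one has $\m{s}\geq c_T$ with $c_T = \m{T\inv}$ along the appropriate vertical contour, the inequality holds with the uniform constant $\beta c_T$. The operator bounds $\|B(s)\|\le M|s|^\mu$ needed to invoke the theorem follow from Lemma~\ref{lemma: int operator s bounds} with $\mu=2$, and $R(s)$ is trivially polynomially bounded.

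**Then** I would apply Lemma~\ref{lemma: CQ coercivity} directly. That lemma converts condition~(i) into the weighted discrete-sum inequality with weights $\rho^{2n}$, where $\rho = e^{-\sigma\dt}+O(\dt^2)$; choosing $\sigma = 1/T$ gives $\rho^{2n}\approx e^{-2t_n/T}$, matching the exponential weights $e^{-2t_n/T}$ in the statement. The discrete convolution $B(\pa_t^{\dt})$ and $(\pa_t^{\dt})\inv$ are exactly the CQ operators of Section~\ref{subsection: cq}. The multiplicative factor $c$ in $c_T = c\,\m{T\inv}$, with $c\to1$ as $\dt_0\to0$, absorbs the $O(\dt^2)$ discrepancy between $\rho$ and $e^{-\sigma\dt}$ together with the passage from the continuous weight $e^{-2t/T}$ to its sampled values.

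**The main obstacle** I anticipate is bookkeeping rather than a deep difficulty: matching the $s$-dependent constant $\m{s}$ of Lemma~\ref{lemma: coercivity} to the constant-$\beta$ template of Lemma~\ref{lemma: Herglotz} and Lemma~\ref{lemma: CQ coercivity}. One must verify that, on the contour $\re s = \sigma = 1/T$ relevant to the CQ transfer, the factor $\m{s}$ can be bounded below uniformly by a multiple of $\m{T\inv}$, so that the weight $\m{s}$ does not interfere with the Herglotz/CQ machinery. This is exactly the step where the factor $c$ (tending to $1$) enters, and it parallels the treatment in \cite[Lemma~4.1]{leapfrog} for the acoustic case. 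Since the matrix--vector and functional-analytic structure here is identical to that case, the argument transfers without modification; I would remark that the proof is the same as in \cite{leapfrog} once Lemma~\ref{lemma: coercivity} is in hand.
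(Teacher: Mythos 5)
Your proposal is correct and follows essentially the same route as the paper, which proves this lemma precisely by combining the Laplace-domain coercivity of Lemma~\ref{lemma: coercivity} (viewed as condition~(i) of Lemma~\ref{lemma: Herglotz} with the uniform constant $\beta\,\m{\sigma}$ obtained from $\m{s}\ge\m{\sigma}$ on the half-plane $\re s\ge\sigma=1/T$) with the CQ coercivity transfer of Lemma~\ref{lemma: CQ coercivity}, the factor $c\to1$ absorbing the discrepancy between $\rho^{2n}$ and $e^{-2t_n/T}$. Your bookkeeping of the identifications ($\calH=\Hdiv\times\Hdiv$ with the anti-symmetric self-duality, $R(s)=s\inv$ times a fixed diagonal scaling, the $|s|^2$ bounds from Lemma~\ref{lemma: int operator s bounds}) is exactly what the paper leaves implicit.
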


\subsection{Time discretization: leapfrog and CQ}
Similarly to \cite{leapfrog}, we use the leapfrog or St\"{o}rmer--Verlet scheme (see, e.g., \cite{HairerLubichWanner}) in the interior:
\begin{equation}
\label{eq: full discr - a}
\begin{aligned}
\mu \bfM \bfH^{n+1/2} =&\ \mu \bfM\bfH^{n} + \half\dt \bfD\bfE^{n} -\half\dt \bfC_1\bfpsi^{n} , \\
\eps \bfM \bfE^{n+1} =&\ \eps \bfM \bfE^{n} - \dt \bfD^T\bfH^{n+1/2} -\dt\bfC_0\bfphi^{n+1/2} + \dt\bfM\bfJ^{n+1/2} , \\
\mu \bfM \bfH^{n+1} =&\ \mu \bfM\bfH^{n+1/2} + \half\dt \bfD\bfE^{n+1} - \half\dt \bfC_1\bfpsi^{n+1} ,
\end{aligned}
\end{equation}
where the last substep of the previous step and the first substep  can be combined to a step from $\bfH^{n-1/2}$ to $\bfH^{n+1/2}$ when no output at $t_{n}$ is needed:
$$
\mu \bfM \bfH^{n+1/2} = \mu \bfM\bfH^{n-1/2} + \dt \bfD\bfE^{n} -\dt \bfC_1\bfpsi^{n} .
$$
This is coupled
with convolution quadrature on the boundary
\begin{equation}
\label{eq: full discr - b}
\bigg[
\bfB(\pa_t^{\dt}) \vect{\bfphi}{\bar\bfpsi}\bigg]^{n+1/2}
= \vect{\bfC_0^T \bar\bfE^{n+1/2}}{\bfC_1^T \bfH^{n+1/2}}
+ \vect{0}{- \alpha \dt^2 \mu\inv\bfC_1^T \bfM\inv \bfC_1 \dot\bfpsi^{n+1/2}},
\end{equation}
where the operation $\bar f^{n+1/2} = \half(f^{n+1} + f^{n})$ is averaging in time and $\dot\bfpsi^{n+1/2} = (\bfpsi^{n+1} - \bfpsi^{n})/\dt$. The second term on the right-hand side is a stabilizing term, with a parameter $\alpha>0$. The role of this extra term becomes clear from the proof of the stability result for the acoustic wave equation \cite[Lemma~8.1]{leapfrog}, which applies to the Maxwell case as well.

Like for the acoustic case, the choice $\alpha=1$ yields a stable scheme under the CFL condition $\dt \|\bfM^{-1/2}\bfD\bfM^{-1/2}\|_2 \leq \sqrt{\eps\mu}$. Up to a factor 2 this is  the CFL condition for the leapfrog scheme for the equation with natural boundary conditions.

In each time step, a linear system with the matrix  $\bfB_0 + \dt \bfG$ needs to be solved for $\bfphi^{n+1/2}$ and $\bar\bfpsi^{n+1/2}$, where $\bfB_0=\bfB(\delta(0)/\dt)$ and
\begin{align*}
\bfG=&\ \left(
\begin{array}{cc}
\half \eps\inv \bfC_0^T \bfM\inv \bfC_0 & 0 \\
0 & 2 \alpha \mu\inv\bfC_1^T \bfM\inv \bfC_1 \\
\end{array}
\right) .
\end{align*}
By the coercivity Lemma~\ref{lemma: coercivity}, $\bfB_0 + \bfB_0^T$ is positive definite. Moreover, $\bfG$ is symmetric positive definite.

\section{Stability results and error bounds for the spatial semidiscretization}
\label{section: semidiscrete results}

Using that the obtained discrete system \eqref{eq: matrix formulation} is of the same form and with the same coercivity property as for the acoustic wave equation, the stability results carry over from Section~6 of \cite{leapfrog}. Only minor technical modifications are needed, such as using the appropriate energy and norms. The only point where the analysis of the semidiscrete problem deviates from the acoustic case is the consistency error estimates, which require special care.

\subsection{Stability}

We consider a system with additional inhomogeneities $j_h,g_h:[0,T]\to L^2(\Omega)^3$ and $\rho_h,\sigma_h:[0,T]\to \Hdiv$, which will later be obtained as the system of error equations with the defects of an interpolation of the exact solution. The coupled system
\begin{align}
\nonumber
&\ (\eps \pa_t\E_h,w_h) = \half (\curl_h \H_h , w_h) + \half (\H_h , \curl_h w_h) - \half \la \mu\inv \vphi_h , \ga_T w_h \ra_\TG + (j_h,w_h) , \\[1mm]
\label{eq:dg-bem-stab}
&\ (\mu \pa_t\H_h,z_h) = -\half (\curl_h \E_h , z_h) - \half (\E_h , \curl_h z_h) - \half \la \psi_h , \ga_T z_h \ra_\TG +(g_h,w_h) ,\\[2mm]
\nonumber
&\ \biggl\la \vect{\xi_h}{\eta_h}, B(\pa_t) \vect{\vphi_h}{\psi_h} \biggr\ra_\TG = \half \Big( \la\xi_h,\mu\inv \ga_T \E_h\ra_\TG + \la \eta_h,\ga_T \H_h \ra_\TG \Big) \\
&\ \hphantom{\biggl\la \vect{\xi_h}{\eta_h}, B(\pa_t) \vect{\vphi_h}{\psi_h} \biggr\ra_\TG = } + (\xi_h,\rho_h)_\Ga +(\eta_h,\sigma_h)_\Ga
\end{align}
where $(\cdot,\cdot)_\Ga$ denotes the inner product on $\Hdiv$, has the matrix-vector formulation
\begin{equation}
\label{eq: residuals - matrix formulation}
\begin{aligned}
&\ \eps \bfM\dot\bfE = -\bfD\bfH -\bfC_0\bfphi + \bfM\bfj , \\
&\ \mu \bfM\dot\bfH =  \bfD^T\bfE -\bfC_1\bfpsi + \bfM\bfg ,\\
&\ \bfB(\pa_t) \vect{\bfphi}{\bfpsi} = \vect{\bfC_0^T \bfE}{\bfC_1^T \bfH} + \vect{\bfM_\Ga\bfrho}{\bfM_\Ga\bfsigma},
\end{aligned}
\end{equation}
where $\bfM_\Ga$ is the boundary mass matrix  with entries $\bfM_\Ga\vert_{k',k} = ( b_{k'}^\Ga, b_k^\Ga )_\Ga$.
The solution of this system can be bounded in terms of $\bfj, \bfg, \bfrho, \bfsigma$ by the stability results proven in Lemma~6.1--6.3 in \cite{leapfrog}.

We immediately translate the stability lemmas of \cite{leapfrog} into the functional analytic setting. The energy estimate of Lemma~6.1 of \cite{leapfrog} becomes the following.

\begin{lemma}
	\label{lemma: semidiscr stability - energy estimate}
	The semidiscrete energy
	\begin{equation*}
	\calE_h(t)=\half \Big( \eps\|\E_h(\cdot,t)\|_{L^2(\Omega)^3}^2 + \mu\|\H_h(\cdot,t)\|_{L^2(\Omega)^3}^2 \Big),
	\end{equation*}
	satisfies the bound, for $t>0$,
	\begin{align*}
	&\ \calE_h(t) \leq C(\beta)\biggl( \calE_h(0) + t \int_0^t
	\bigl(\|j_h(\cdot,\tau)\|_{L^2(\Omega)^3}^2 + \|g_h(\cdot,\tau)\|_{L^2(\Omega)^3}^2\bigr) \d \tau \\
	&\ \qquad + \max\{t^2,t^6 (\eps\mu)^2\} \int_0^t
	\bigl(\|\pa_t^2 \rho_h(\cdot,\tau)\|_{\Hdiv}^2 + \|\pa_t^2 \sigma_h(\cdot,\tau)\|_{\Hdiv}^2 \bigr)
	\d \tau \biggr),
	\end{align*}
	provided that $\rho_h(\cdot,0)=\pa_t\rho_h(\cdot,0)=0$ and $\sigma_h(\cdot,0)=\pa_t\sigma_h(\cdot,0)=0$.
\end{lemma}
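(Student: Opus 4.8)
The plan is to run the standard energy method directly on the coupled weak system \eqref{eq:dg-bem-stab}, exploiting its antisymmetric structure, and then to invoke the abstract coercivity estimate of Lemma~\ref{lemma: general energy-like est}. First I would test the three equations of \eqref{eq:dg-bem-stab} with $w_h=\E_h$, $z_h=\H_h$ and $(\xi_h,\eta_h)=(\vphi_h,\psi_h)$, respectively, and add them. Since the $L^2(\Om)^3$ inner product is symmetric, the four half-weighted $\curl_h$ terms cancel in pairs; since $\mu$ is a scalar constant and $\la\cdot,\cdot\ra_\TG$ is bilinear, the coupling terms $-\half\la\mu\inv\vphi_h,\ga_T\E_h\ra_\TG$ and $-\half\la\psi_h,\ga_T\H_h\ra_\TG$ of the first two equations cancel exactly against the matching terms on the right-hand side of the third. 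What remains is the energy identity
\[
\diff\,\calE_h(t) + \biggl\la\vect{\vphi_h}{\psi_h},B(\pa_t)\vect{\vphi_h}{\psi_h}\biggr\ra_\TG = (j_h,\E_h)+(g_h,\H_h)+(\vphi_h,\rho_h)_\Ga+(\psi_h,\sigma_h)_\Ga,
\]
with $\calE_h(t)=\half\bigl(\eps\|\E_h(\cdot,t)\|_{L^2(\Om)^3}^2+\mu\|\H_h(\cdot,t)\|_{L^2(\Om)^3}^2\bigr)$.

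Second, this identity is exactly of the form required by Lemma~\ref{lemma: general energy-like est}, whose coercivity input is Lemma~\ref{lemma: time-cont coercivity} (equivalently Lemma~\ref{lemma: coercivity-discrete} in the matrix--vector picture), with $\calF(t)$ equal to the right-hand side above. Applying the energy bound \eqref{eq: energy estimate} yields a bound for $\calE_h(T)$ together with the coercivity integral $\beta c_T\int_0^T\e^{-2t/T}\bigl((\eps\mu)\inv\|\pa_t\inv\vphi_h\|_{\Hdiv}^2+\|\pa_t\inv\psi_h\|_{\Hdiv}^2\bigr)\d t$ in terms of $\calE_h(0)$ and $\int_0^T\e^{2(1-t/T)}\calF(t)\,\d t$. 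The two interior contributions are harmless: by Cauchy--Schwarz and Young, $(j_h,\E_h)+(g_h,\H_h)\le\half\bigl(\|j_h\|_{L^2(\Om)^3}^2+\|g_h\|_{L^2(\Om)^3}^2\bigr)+C\,\calE_h(t)$, and the $\calE_h$ part is absorbed by a Gronwall argument, producing the factor $t$ in front of $\int_0^t(\|j_h\|_{L^2(\Om)^3}^2+\|g_h\|_{L^2(\Om)^3}^2)$.

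Third, and this is where the real work lies, I would estimate the two boundary defect contributions $(\vphi_h,\rho_h)_\Ga$ and $(\psi_h,\sigma_h)_\Ga$. Since the coercivity on the left controls only the antiderivatives $\pa_t\inv\vphi_h,\pa_t\inv\psi_h$, I would integrate $\int_0^T\e^{2(1-t/T)}(\vphi_h,\rho_h)_\Ga\,\d t$ by parts in time, writing $\vphi_h=\pa_t(\pa_t\inv\vphi_h)$; the endpoint contribution at $t=0$ vanishes because $\rho_h(\cdot,0)=0$ and $\pa_t\inv\vphi_h(\cdot,0)=0$. The remaining integral pairs $\pa_t\inv\vphi_h$ with time derivatives of $\rho_h$, and its $\pa_t\inv\vphi_h$-part is absorbed into the coercivity integral by Young's inequality, with the absorption weight $\beta c_T(\eps\mu)\inv$ (respectively $\beta c_T$) dictated by the two summands of Lemma~\ref{lemma: time-cont coercivity}. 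The factors $\|\rho_h\|_{\Hdiv}$ and $\|\pa_t\rho_h\|_{\Hdiv}$ that are thereby freed up are rewritten, using the vanishing data $\rho_h(\cdot,0)=\pa_t\rho_h(\cdot,0)=0$ and the fundamental theorem of calculus, as $\|\pa_t\rho_h(\cdot,t)\|_{\Hdiv}^2\le t\int_0^t\|\pa_t^2\rho_h\|_{\Hdiv}^2$ and $\|\rho_h(\cdot,t)\|_{\Hdiv}^2\le Ct^3\int_0^t\|\pa_t^2\rho_h\|_{\Hdiv}^2$; the same is done for $\sigma_h$.

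The main obstacle is precisely this boundary-defect estimate. One must track how the coercivity denominator $c_T=\m{T\inv}$ — which scales differently in the two regimes of $\m{s}=\min\{1,|s|^2\eps\mu\}\re s$ — combines with the powers of $t$ produced above, and verify that after absorption the prefactor consolidates into $\max\{t^2,t^6(\eps\mu)^2\}$ in front of $\int_0^t(\|\pa_t^2\rho_h\|_{\Hdiv}^2+\|\pa_t^2\sigma_h\|_{\Hdiv}^2)$. Equal care is needed for the endpoint term at $t=T$ left over from the integration by parts, which must be controlled rather than discarded. Before invoking Lemma~\ref{lemma: general energy-like est} one should also confirm that the discrete boundary densities $\vphi_h,\psi_h$ inherit the vanishing initial data up to third order required there; this follows from the relations \eqref{eq: boundary term equalities} together with the convolution-quadrature convention of extending all data by zero for $t\le0$. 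Since the matrix--vector system \eqref{eq: residuals - matrix formulation} is formally identical to its acoustic counterpart and carries the identical coercivity, every step above is the verbatim analogue of the proof of Lemma~6.1 in \cite{leapfrog}, with the acoustic energy and trace norms replaced by the electromagnetic energy $\calE_h$ and the $\Hdiv$-norm.
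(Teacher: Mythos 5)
Your proposal is correct and follows essentially the same route as the paper: the paper gives no self-contained energy argument, but proves the lemma by observing that the matrix--vector system \eqref{eq: residuals - matrix formulation} has exactly the form and the coercivity (Lemmas~\ref{lemma: coercivity-discrete} and~\ref{lemma: time-cont coercivity}) of the acoustic case, so that Lemma~6.1 of \cite{leapfrog} applies verbatim and only needs to be translated into the $\Hdiv$ setting. Your sketch reconstructs precisely that acoustic proof (energy identity from the antisymmetric structure, Lemma~\ref{lemma: general energy-like est}, absorption of the interior terms, integration by parts in time for the boundary defects), which is exactly the correspondence you acknowledge in your final paragraph.
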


The estimates for the boundary functions of Lemma 6.3 of \cite{leapfrog} now translate into the following.

\begin{lemma}
	\label{lemma: semidiscr stability - boundary functions}
	For $t>0$, the boundary functions are bounded as
	\begin{align*}
	&\ \int_0^t \bigl(\|\vphi_h(\cdot,\tau)\|_{\Hdiv}^2 + \|\psi_h(\cdot,\tau)\|_{\Hdiv}^2\bigr) \d \tau \\
	&\ \quad \leq C(\beta) \max\{t^2,t^6 (\eps\mu)^2\} \bigg( \int_0^t \bigl(\|\pa_t j_h(\cdot,\tau)\|_{L^2(\Omega)^3}^2 + \|\pa_t g_h(\cdot,\tau)\|_{L^2(\Omega)^3}^2 \\
	&\ \qquad \qquad+ \|\pa_t^2 \rho_h(\cdot,\tau)\|_{\Hdiv}^2 + \|\pa_t^2 \sigma_h(\cdot,\tau)\|_{\Hdiv}^2 \bigr)\d \tau \bigg) .
	\end{align*}
	provided that $j_h(\cdot,0)=0$, $g_h(\cdot,0)=0$, $\rho_h(\cdot,0)=\pa_t\rho_h(\cdot,0)=0$ and $\sigma_h(\cdot,0)=\pa_t\sigma_h(\cdot,0)=0$.
\end{lemma}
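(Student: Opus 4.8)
The plan is to exploit the structural identity, emphasized in the italicized remark after Lemma~\ref{lemma: coercivity-discrete}, between the present semidiscrete system and the one for the acoustic wave equation. The error system \eqref{eq:dg-bem-stab} has the matrix--vector form \eqref{eq: residuals - matrix formulation}, which is formally identical to the system treated in Section~6 of \cite{leapfrog}: an anti-symmetric interior coupling through $\bfD$ and $\bfD^T$, a symmetric positive definite mass matrix $\bfM$, and a boundary operator $\bfB(\pa_t)$ obeying the coercivity of Lemma~\ref{lemma: coercivity-discrete} together with its time-continuous form in Lemma~\ref{lemma: time-cont coercivity}. Since the proof of \cite[Lemma~6.3]{leapfrog} uses only these three structural ingredients, it applies verbatim, yielding the asserted estimate for the nodal vectors $\bfphi,\bfpsi$ in terms of $\bfj,\bfg,\bfrho,\bfsigma$. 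The only genuinely Maxwell-specific inputs, namely the coercivity Lemma~\ref{lemma: coercivity} and the trace/duality framework of Section~\ref{section: recap helmholtz}, are already in hand, so the task reduces to a careful translation of norms.

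For concreteness, I record the mechanism behind that matrix bound. Differentiate \eqref{eq:dg-bem-stab} once in time, then test the differentiated interior equations with $\pa_t\E_h,\pa_t\H_h$ and the differentiated boundary equation with $\xi_h=\pa_t\vphi_h$, $\eta_h=\pa_t\psi_h$. By the discrete Green formula \eqref{eq: Green-h} and the anti-symmetric structure the $\curl_h$ couplings cancel, leaving an identity of the type treated by Lemma~\ref{lemma: general energy-like est}, now for the energy of $(\pa_t\E_h,\pa_t\H_h)$ and with the pairing $\la\cdot,B(\pa_t)\cdot\ra_\TG$ evaluated at $(\pa_t\vphi_h,\pa_t\psi_h)$. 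Since $\pa_t\inv\pa_t\vphi_h=\vphi_h$ and $\pa_t\inv\pa_t\psi_h=\psi_h$, thanks to the vanishing initial data, the coercivity of Lemma~\ref{lemma: time-cont coercivity} produces on its right-hand side, up to the $\eps\mu$-weightings, exactly $\int_0^t(\|\vphi_h\|_{\Hdiv}^2+\|\psi_h\|_{\Hdiv}^2)$. On the source side, the differentiated interior forcing yields $\pa_t j_h,\pa_t g_h$, while the boundary forcing enters through $(\pa_t\vphi_h,\pa_t\rho_h)_\Ga$ and $(\pa_t\psi_h,\pa_t\sigma_h)_\Ga$; integrating by parts in time and using $\rho_h(\cdot,0)=\pa_t\rho_h(\cdot,0)=0$ and $\sigma_h(\cdot,0)=\pa_t\sigma_h(\cdot,0)=0$ converts these into terms carrying $\pa_t^2\rho_h$ and $\pa_t^2\sigma_h$, which are absorbed into the coercive left-hand side by Young's inequality. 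The hypotheses $j_h(\cdot,0)=g_h(\cdot,0)=0$ guarantee that the once-differentiated system starts from rest, so the initial-energy contribution drops out.

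It then remains to translate the matrix--vector bound into the stated inequality. As in the proof of Lemma~\ref{lemma: coercivity-discrete} one has $\|\vphi_h\|_{\Hdiv}^2=\bfphi^*\bfM_\Ga\bfphi$ and $\|\psi_h\|_{\Hdiv}^2=\bfpsi^*\bfM_\Ga\bfpsi$ on the boundary, $\|j_h\|_{L^2(\Om)^3}^2=\bfj^*\bfM\bfj$ in the interior (and analogously for $g_h,\rho_h,\sigma_h$), and nodal time derivatives correspond to time derivatives of the associated finite element functions; this is precisely what makes the matrix bound of \cite[Lemma~6.3]{leapfrog} coincide with the claimed functional-analytic one. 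The prefactor $\max\{t^2,t^6(\eps\mu)^2\}$ is inherited from the constant $c_T=\m{T\inv}$ of Lemma~\ref{lemma: time-cont coercivity} once the Gronwall estimate is run on $[0,t]$, exactly as in the acoustic case. I expect the only real point requiring care to be this bookkeeping: reconciling the $\eps\mu$-weightings inside the coercivity of Lemma~\ref{lemma: coercivity} with the unweighted source norms on the right-hand side, so that the resulting $t$- and $\eps\mu$-dependence matches that of \cite{leapfrog}. No new analytic ingredient beyond the already-established coercivity and trace framework is required.
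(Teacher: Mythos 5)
Your proposal is correct and follows essentially the same route as the paper, which itself gives no detailed argument: it invokes the identical matrix--vector structure \eqref{eq: residuals - matrix formulation} and the coercivity of Lemma~\ref{lemma: coercivity-discrete} to reuse Lemma~6.3 of \cite{leapfrog} verbatim, and then translates the nodal-vector bounds into $\Hdiv$- and $L^2(\Omega)^3$-norms via the mass matrices, exactly as you describe. Your additional sketch of the mechanism (differentiating the system in time so that $\pa_t\inv$ in the coercivity bound recovers $\vphi_h,\psi_h$, with the hypotheses on vanishing initial values of the data ensuring the differentiated system starts from rest) is precisely the argument carried out inside \cite{leapfrog}, so no gap remains.
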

%

\subsection{Interpolation error bounds}
\label{subsection: interpolation error bounds}

We consider the projection of functions on $\Omega$ and $\Gamma$ to continuous piecewise linear finite element functions by interpolation: Let $\P$ denote the operator of piecewise linear (with respect to the triangulation $\calT_h$) and continuous interpolation in~$\Omega$, and let $\Pi_h$ denote the operator of piecewise linear continuous interpolation on~$\Gamma$. Since the normal vector $\nu$ is constant on every face of $\Gamma$, we then have
$$
\Pi_h(\chi\times \nu) = (\Pi_h\chi)\times \nu   \qquad \hbox{for }\ \chi \in C(\Gamma),
$$
which implies that $\Pi_h$ maps $\Hdiv\cap C(\Gamma)$ into $\Hdiv$. Moreover, this yields the very useful relation
\begin{equation}\label{eq: commute}
\Pi_h \gamma_T F = \gamma_T I_h F  \qquad \hbox{for }\ F \in C(\overline\Omega)^3,
\end{equation}
as is seen by noting that
$$
\Pi_h \gamma_T F = \Pi_h (\gamma F \times \nu) = (\Pi_h \gamma F) \times \nu = (\gamma \P F)\times \nu = \gamma_T I_h F.
$$

It is because of \eqref{eq: commute} that we work in the following with interpolation operators rather than orthogonal projections.
We recall the standard results for the interpolation errors.
\begin{lemma}
	\label{lemma: interpolation error}
	There exists a constant $C$, independent of $h$, such that for all $v\in H^2(\Om)^3$,
	\begin{align*}
	\|v - \P v\|_{L^2(\Omega)^3} + h \|\nb(v - \P v)\|_{L^2(\Omega)^{3\times3}} \leq &\ C h^{2} |v|_{H^{2}(\Omega)^3} .
	\end{align*}
\end{lemma}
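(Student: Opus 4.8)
The plan is to localize the global bound to an elementwise estimate on each simplex $K\in\calT_h$ and then to obtain the local estimate by an affine scaling argument combined with the Bramble--Hilbert lemma. Since $\P$ acts componentwise, and both the $L^2$- and the gradient-norm split as sums over the three components, it suffices to treat a scalar function $v\in H^2(\Om)$. On a single element $K$ the interpolant $\P v|_K$ is determined by the four vertex values of $v$; these are well defined because in three dimensions $H^2(K)\hookrightarrow C(\overline K)$ by the Sobolev embedding theorem, so pointwise evaluation is a bounded functional on $H^2(K)$.

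First I would pass to a fixed reference simplex $\widehat K$ via the affine diffeomorphism $F_K:\widehat K\to K$, $\widehat x\mapsto B_K\widehat x+b_K$. Writing $\widehat v=v\circ F_K$ and $\widehat{\P}$ for the nodal interpolation on $\widehat K$, one has the commutation $\widehat{\P v}=\widehat{\P}\,\widehat v$, since interpolation only sees vertex values, which are preserved by the pullback. The key property is that $\widehat{\P}$ reproduces affine functions exactly, so the error operator $\widehat v\mapsto \widehat v-\widehat{\P}\,\widehat v$ is a bounded linear map on $H^2(\widehat K)$ that annihilates $\mathbb{P}_1(\widehat K)$. The Bramble--Hilbert lemma then yields, on the fixed domain $\widehat K$,
\[
\|\widehat v-\widehat{\P}\,\widehat v\|_{L^2(\widehat K)} + |\widehat v-\widehat{\P}\,\widehat v|_{H^1(\widehat K)} \le \widehat C\,|\widehat v|_{H^2(\widehat K)},
\]
with a constant $\widehat C$ depending only on $\widehat K$.

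It remains to transform this back to $K$ and to sum. Standard scaling estimates for the affine map relate the $H^m$-seminorms on $K$ and $\widehat K$ through powers of $\|B_K\|$ and $\|B_K^{-1}\|$, the Jacobian factors $|\det B_K|^{\pm 1/2}$ cancelling when the forward and backward transformations are combined; contact-regularity of the family $\calT_h$ bounds $\|B_K\|\le C\,h_K$ and $\|B_K^{-1}\|\le C\,h_K^{-1}$ in terms of the element diameter $h_K$. Feeding these into the reference estimate gives $\|v-\P v\|_{L^2(K)}\le C\,h_K^2\,|v|_{H^2(K)}$ and $|v-\P v|_{H^1(K)}\le C\,h_K\,|v|_{H^2(K)}$ for each $K$. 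Squaring, summing over $K\in\calT_h$, using quasi-uniformity ($h_K\le h$), and reassembling the three components yields the claimed bound. The only genuinely nontrivial ingredient is the Bramble--Hilbert lemma together with the correct tracking of the powers of $h_K$ through the affine scaling; everything else is bookkeeping, and the shape-regularity hypothesis is precisely what keeps all constants uniform in $h$.
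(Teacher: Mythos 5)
Your proof is correct. The paper does not actually prove this lemma --- it is stated with the remark ``we recall the standard results for the interpolation errors,'' i.e.\ it is quoted as a known fact --- and your argument (componentwise reduction, affine pullback to a reference simplex, Bramble--Hilbert applied to the error operator $I-\widehat{I_h}$ which annihilates $\mathbb{P}_1$ and is bounded on $H^2(\widehat K)$ thanks to the embedding $H^2\hookrightarrow C$ in three dimensions, then the standard scaling estimates and summation) is precisely the classical textbook proof being invoked. One minor terminological point: the bounds $\|B_K\|\le C h_K$ and $\|B_K^{-1}\|\le C h_K^{-1}$ follow from shape regularity (which the paper's quasi-uniformity assumption supplies), not from contact regularity as such; you state this correctly in your closing sentence, so this is a labelling slip rather than a gap.
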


The following interpolation error estimate is a standard result for boundary element approximations, see \cite{Nedelec}. 

\begin{lemma}
	\label{lemma: interpolation error - boundary}
	There exists a constant $C$, independent of $h$, such that for all $\vphi \in H^{3/2}(\Ga)^3$,
	$$
	\|\vphi - \Pi_h  \vphi\|_{H^{1/2}(\Ga)^3} \leq C h \|\vphi\|_{H^{3/2}(\Ga)^3}.
	$$
\end{lemma}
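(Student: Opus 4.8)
\medskip
\noindent\emph{Sketch of a proof.}

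Since $\Pi_h$ acts componentwise and the normal $\nu$ is constant on every face, it suffices to prove the scalar estimate; the vector-valued bound then follows component by component. I would obtain the result as the special case $t=\tfrac32$, $s=\tfrac12$ of the standard approximation property
\[
\|\vphi-\Pi_h\vphi\|_{H^s(\Ga)}\le C\,h^{t-s}\,\|\vphi\|_{H^t(\Ga)},\qquad 0\le s\le 1<t\le 2,
\]
which gives exactly the factor $h^{t-s}=h$. Throughout one uses that $\Ga$ is a two-dimensional surface, so that $H^t(\Ga)\hookrightarrow C(\Ga)$ as soon as $t>1$; in particular the nodal interpolant $\Pi_h\vphi$ is well defined and stable for $\vphi\in H^{3/2}(\Ga)$ because $\tfrac32>1$.

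The clean way to reach the fractional estimate is by operator interpolation. Writing $E:=\mathrm{id}-\Pi_h$ and using the identifications $H^{3/2}(\Ga)=[H^1(\Ga),H^2(\Ga)]_{1/2}$ for the source and $H^{1/2}(\Ga)=[L^2(\Ga),H^1(\Ga)]_{1/2}$ for the target, it suffices to bound $E$ at the two integer endpoints, namely $\|E\|_{L^2(\Ga)\leftarrow H^1(\Ga)}\le C h$ and $\|E\|_{H^1(\Ga)\leftarrow H^2(\Ga)}\le C h$; the interpolation theorem then yields $\|E\|_{H^{1/2}(\Ga)\leftarrow H^{3/2}(\Ga)}\le (Ch)^{1/2}(Ch)^{1/2}=Ch$. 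Both endpoint bounds are the classical first-order interpolation estimates, obtained from the Bramble--Hilbert lemma (affine functions are reproduced by $\Pi_h$) together with the affine scaling argument on the shape-regular, quasi-uniform surface triangulation.

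The main obstacle is the lower endpoint $\|E\|_{L^2(\Ga)\leftarrow H^1(\Ga)}$, which is delicate precisely because on the two-dimensional surface $\Ga$ the critical embedding $H^1(\Ga)\hookrightarrow C(\Ga)$ \emph{fails}, so the nodal interpolant is not even defined on $H^1(\Ga)$ and the literal estimate for nodal interpolation is meaningless there. The standard remedy is to insert at that endpoint a Cl\'ement or Scott--Zhang quasi-interpolant $Q_h$ that is $L^2$-stable and reproduces piecewise linears, to prove $\|\mathrm{id}-Q_h\|_{L^2(\Ga)\leftarrow H^1(\Ga)}\le Ch$ and interpolate it to $H^{1/2}\leftarrow H^{3/2}$, and then to estimate the correction $\|\Pi_h-Q_h\|_{H^{1/2}(\Ga)\leftarrow H^{3/2}(\Ga)}$ directly via an inverse inequality, using that $\Pi_h$ is well defined on $H^{3/2}(\Ga)$. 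Equivalently one can argue purely locally on the reference face with a fractional Deny--Lions lemma, restricting the source regularity to $t>1$; the only additional care needed there is that the order-$\tfrac12$ Slobodeckij seminorm does not localize additively over elements, which is controlled on quasi-uniform meshes. One must also justify the fractional spaces $H^{1/2}(\Ga)$, $H^{3/2}(\Ga)$ and the two interpolation identities on the merely piecewise smooth surface $\Ga$. All of these are classical facts of boundary element approximation theory and are exactly what is packaged in \cite{Nedelec}.
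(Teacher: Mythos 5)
Your proposal is correct in outline, but there is nothing in the paper to compare it against step by step: the paper does not prove this lemma at all. It states it as ``a standard result for boundary element approximations'' with a citation to N\'ed\'elec's error-analysis chapter, adding only the remark that for piecewise smooth boundaries piecewise $H^{3/2}$ regularity suffices. Your sketch essentially reconstructs what sits inside that citation, and it does so correctly: the componentwise reduction (legitimate here because $\nu$ is facewise constant, exactly as the paper notes around \eqref{eq: commute}), the operator-interpolation strategy between integer-order endpoint estimates, and --- most importantly --- the honest identification of the real obstacle, namely that on a two-dimensional surface $H^1(\Ga)\not\hookrightarrow C(\Ga)$, so the nodal interpolant is undefined at the lower endpoint and one must either switch to a Cl\'ement/Scott--Zhang quasi-interpolant plus an inverse-inequality correction, or argue locally with a fractional Bramble--Hilbert lemma (using $3/2>1$ so that nodal values of $\vphi$ make sense). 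Your caveat about fractional norms not localizing additively is also the right thing to worry about; it is precisely why the naive ``sum local $H^{1/2}$ estimates over faces'' argument fails and the interpolation-of-operators detour is taken. One simplification you could exploit: in this paper $\Ga$ is the boundary of a triangulated polyhedral domain, hence piecewise flat, so the chart-related issues in defining $H^{3/2}(\Ga)$ and the interpolation identities reduce to the piecewise (facewise) setting the paper's remark alludes to. In short, the paper buys brevity by outsourcing the lemma to \cite{Nedelec}; your sketch makes the mechanism and its pitfalls explicit, and modulo carrying out the quasi-interpolant correction and the endpoint Bramble--Hilbert estimates in detail, it is a valid proof plan.
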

We remark that for piecewise smooth boundaries just the piecewise $H^{3/2}$ regularity is needed.

For the boundary functions we have the following interpolation error bounds.
\begin{lemma}
	\label{lemma: interpolation error - Calderon}
	There exists a constant $C(t)$, increasing at most polynomially in~$t$ and independent of  $h$, such that for any $t>0$
	\begin{align*}
	&\ \int_0^t \Big\|B(\pa_t) \vect{(I-\Pi_h )\vphi(\cdot,\tau)}{(I-\Pi_h )\psi(\cdot,\tau)}\Big\|_{\Hdiv \times \Hdiv}^2 \d \tau\\
	&\ \leq  C(t) h^2 \int_0^t \bigl(\|\pa_t^2 \vphi(\cdot,\tau)\|_{H^{3/2}(\Ga)^3}^2 + \|\pa_t^2 \psi(\cdot,\tau)\|_{H^{3/2}(\Ga)^3}^2\bigr) \d \tau
	\end{align*}
	for all $\vphi, \psi \in C^2([0,t], \Hdiv\cap H^{3/2}(\Ga)^3)$ with $\vphi(\cdot,0)=\pa_t\vphi(\cdot,0)=0$ and $\psi(\cdot,0)=\pa_t\psi(\cdot,0)=0$.
\end{lemma}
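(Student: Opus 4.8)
The plan is to decouple the estimate into a purely temporal part, governed by the Laplace-domain mapping bound of Lemma~\ref{lemma: int operator s bounds}, and a purely spatial part, governed by the interpolation estimate of Lemma~\ref{lemma: interpolation error - boundary}; the operational calculus for the convolution $B(\pa_t)$ is what links the two. Writing $g(\cdot,\tau)=\vect{(I-\Pi_h)\vphi(\cdot,\tau)}{(I-\Pi_h)\psi(\cdot,\tau)}$, the quantity to be estimated is $\int_0^t\|B(\pa_t)g(\cdot,\tau)\|_{\Hdiv\times\Hdiv}^2\,\d\tau$. I would first establish a temporal estimate that trades the quadratic growth of the symbol $B(s)$ for two time derivatives of $g$, and only afterwards insert the spatial smallness coming from the interpolation error.

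For the temporal step I pass to the Laplace domain. On the line $\re s=\sigma>0$ one has $\|B(s)\|_{\Hdiv\times\Hdiv\leftarrow\Hdiv\times\Hdiv}\le C(\sigma)\,|s|^2$ by Lemma~\ref{lemma: int operator s bounds} (each block $V(s),K(s)$ obeys this bound, and the factor $\mu\inv$ and the sign pattern are harmless). Since $g(\cdot,0)=\pa_tg(\cdot,0)=0$, the Laplace transform satisfies $\widehat{\pa_t^2 g}(s)=s^2\widehat g(s)$, hence $|s|^2\|\widehat g(s)\|=\|\widehat{\pa_t^2 g}(s)\|$. Plancherel's theorem for the Laplace transform on $\re s=\sigma$ then yields
\begin{equation*}
  \int_0^\infty \e^{-2\sigma\tau}\|B(\pa_t)g(\cdot,\tau)\|_{\Hdiv\times\Hdiv}^2\,\d\tau
  \le C(\sigma)^2 \int_0^\infty \e^{-2\sigma\tau}\|\pa_t^2 g(\cdot,\tau)\|_{\Hdiv\times\Hdiv}^2\,\d\tau .
\end{equation*}
The assumed regularity $g\in C^2$ with $g(\cdot,0)=\pa_tg(\cdot,0)=0$ matches exactly the symbol order $\mu=2$; although $B(\pa_t)$ was defined classically for more regular data, the $L^2$-in-time bound above extends to this class by a density argument.

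For the spatial step I use that $\Pi_h$ commutes with $\pa_t$, so that $\pa_t^2 g=\vect{(I-\Pi_h)\pa_t^2\vphi}{(I-\Pi_h)\pa_t^2\psi}$. For tangential fields the continuous embedding $H^{1/2}(\Ga)^3\hookrightarrow\Hdiv$ (which follows from $\Hdiv=\Ht^{-1/2}(\div_\Ga,\Ga)$, since $\div_\Ga$ maps $H^{1/2}$ into $H^{-1/2}$ and $\Ht^{1/2}\subset\Ht^{-1/2}$, cf.\ \cite{BuffaHiptmair}) combined with Lemma~\ref{lemma: interpolation error - boundary} gives
\begin{equation*}
  \|(I-\Pi_h)\pa_t^2\vphi\|_{\Hdiv}\le C\,\|(I-\Pi_h)\pa_t^2\vphi\|_{H^{1/2}(\Ga)^3}\le C\,h\,\|\pa_t^2\vphi\|_{H^{3/2}(\Ga)^3},
\end{equation*}
and similarly for $\psi$, so that $\|\pa_t^2 g\|_{\Hdiv\times\Hdiv}^2\le C h^2(\|\pa_t^2\vphi\|_{H^{3/2}(\Ga)^3}^2+\|\pa_t^2\psi\|_{H^{3/2}(\Ga)^3}^2)$. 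To remove the weight and localise to $[0,t]$ I choose $\sigma=1/t$: on $[0,t]$ one has $\e^{-2}\le \e^{-2\sigma\tau}\le 1$, so the unweighted integral over $[0,t]$ on the left is bounded by $\e^2$ times the weighted integral over $[0,\infty)$, while on the right the weight is bounded by~$1$. Causality of $B(\pa_t)$ means its value on $[0,t]$ depends only on $\vphi,\psi|_{[0,t]}$, so the data may be extended beyond $t$ by a bounded $C^2$-extension preserving the vanishing initial data, keeping the right-hand side controlled by the integral over $[0,t]$. The resulting constant is a fixed multiple of $\e^2\,C(1/t)^2$, which grows at most polynomially in $t$ because the $\sigma$-dependence of the bound in Lemma~\ref{lemma: int operator s bounds} is polynomial in $1/\sigma$ (as follows from its derivation along the lines of \cite{BallaniBanjaiSauterVeit}).

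The step I expect to be the genuine obstacle is not the (standard) temporal Plancherel estimate but the spatial one: one must verify that the Maxwell trace norm $\Hdiv=H_\times^{-1/2}(\div_\Ga,\Ga)$ is indeed controlled by $H^{1/2}(\Ga)^3$ for the tangential interpolation error, and that the three-component interpolation operator appearing in Lemma~\ref{lemma: interpolation error - boundary} is compatible with the tangential interpolation $\Pi_h$ built into $\Psi_h$ via $\Pi_h(\chi\times\nu)=(\Pi_h\chi)\times\nu$. This is precisely the point where the functional-analytic setting departs from the acoustic case and where care is required.
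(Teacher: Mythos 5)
Your proposal is correct and follows essentially the same route as the paper's proof: the $|s|^2$ bound of Lemma~\ref{lemma: int operator s bounds} on the blocks of $B(s)$, control of the $\Hdiv$-norm of the interpolation error by its $H^{1/2}(\Ga)^3$-norm combined with Lemma~\ref{lemma: interpolation error - boundary}, and then Plancherel plus causality to trade the factor $|s|^2$ for two time derivatives. The only (immaterial) difference is ordering --- you apply Plancherel first and the spatial estimate to $\pa_t^2 g$ afterwards, whereas the paper estimates $B(s)$ acting on the interpolation error in the Laplace domain and invokes Plancherel at the end; your writeup also spells out the weight-removal, extension and causality details that the paper compresses into a single sentence.
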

\begin{proof}
	The proof is similar to that of Lemma~7.2 in \cite{leapfrog}: first we bound the action of the blocks of $B(s)$, then we use Plancherel's formula to bound the action of the convolution operator $B(\pa_t)$.
	
	By the boundedness of the boundary integral operators Lemma~\ref{lemma: int operator s bounds}, for $\re s \geq \sigma >0$ we obtain
	\begin{align*}
	&\|V(s)(I-\Pi_h  )\vphi\|_{\Hdiv} \leq \ C|s|^2 \|(I-\Pi_h  )\vphi\|_{\Hdiv} \\
	&\leq \ C|s|^2 \big( \|(I-\Pi_h  )\vphi\|_{\Ht^{-1/2}(\Ga)} + \|\div_\Ga((I-\Pi_h  )\vphi)\|_{H^{-1/2}(\Ga)}\big) \\
	&\leq \ C|s|^2 \,  \|(I-\Pi_h  )\vphi\|_{H^{1/2}(\Ga)^3}\,.
	\end{align*}
	Then, Lemma~\ref{lemma: interpolation error - boundary} yields
	\begin{align*}
	\|V(s)(I-\Pi_h  )\vphi\|_{\Hdiv} \leq &\ C |s|^2 h \|\vphi\|_{H^{3/2}(\Ga)^3}.
	\end{align*}
	A similar estimate holds for the blocks $K(s)$, and so we obtain
	\begin{align*}
	&\ \Big\|B(s) \vect{(I-\Pi_h  )\vphi}{(I-\Pi_h  )\psi}\Big\|_{\Hdiv \times \Hdiv} \leq C|s|^2 h \big( \|\vphi\|_{H^{3/2}(\Ga)^3} + \|\psi\|_{H^{3/2}(\Ga)^3} \big) .
	\end{align*}
	Using Plancherel's formula and causality then yields the stated bound.
	\QED\end{proof}

\subsection{Consistency}
\label{subsection: consistency}

We study the defects (or consistency errors) obtained on inserting the interpolated solution $(\P \E, \P \H, \Pi_h \vphi, \Pi_h \psi)$ into the semidiscrete variational formulation. These defects are defined by
\begin{align*}
&  (d_h^E,w_h) = (\eps \pa_t\P \E,w_h) - \half (\curl_h \P \H,w_h) - \half ( \P \H,\curl_h w_h) -(J,w_h)
\\
& \hphantom{(d_h^E,w_h) = } + \half \la \Pi_h \mu\inv \vphi,\ga_T w_h\ra_\TG
\\
& (d_h^H,z_h) = (\mu \pa_t\P \H,z_h) - \half (\curl_h \P \E,z_h) - \half ( \P \E,\curl_h z_h)
\\
& \hphantom{(d_h^H,z_h) = } + \half \la \Pi_h \psi,\ga_T z_h\ra_\TG \\
&(\xi_h,  d_h^\psi)_\Ga + (\eta_h,d_h^\vphi )_\TG =
\biggl\la \vect{\xi_h}{\eta_h}, B(\pa_t) \vect{\Pi_h\vphi}{\Pi_h\psi} \biggr\ra_\TG
\\
& \hphantom{(\xi_h,  d_h^\psi)_\Ga + (\eta_h,d_h^\vphi )_\TG =} - \half \Big( \la\xi_h,\mu\inv \ga_T \P\E\ra_\TG + \la \eta_h,\ga_T \P\H \ra_\TG \Big)
\end{align*}
for all $w_h,z_h\in V_h$ and $\xi_h,\eta_h\in\Hdiv$.

These defects are bounded as follows.

\begin{lemma}\label{lemma: defect bounds}
	If  the solution of Maxwell's equations \eqref{eq: first order Maxwell} is sufficiently smooth, then the defects satisfy the first-order bounds, for $t>0$,
	\begin{align*}
	&\| d_h^E(t) \|_{L^2(\Omega)^3} \le C\, h, \quad\
	\| d_h^H(t) \|_{L^2(\Omega)^3} \le C\, h,
	\\
	&\biggl(\int_0^t\bigl( \| \pa_t^2 d_h^\psi(\tau) \|_{\Hdiv} ^2 +
	\| \pa_t^2 d_h^\vphi(\tau) \|_{\Hdiv}^2 \bigr)\d \tau\biggr)^{1/2} \le C(t)\, h.
	\end{align*}
	The constant $C(t)$ grows only polynomially with $t$.
\end{lemma}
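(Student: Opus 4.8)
The plan is to bound the three defects separately, in each case splitting the discrete operator into the exact operator applied to the exact solution (which cancels by the continuous equation) plus a commutator-type error produced by interpolation.

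For the interior defects $d_h^E$ and $d_h^H$, I would first rewrite the definition of, say, $d_h^E$ by adding and subtracting the continuous curl terms evaluated on the exact solution. Using the discrete Green formula \eqref{eq: Green-h} together with the exact equation \eqref{eq: first order Maxwell}, the leading parts cancel and one is left with terms of the form $(\curl_h \P\H - \curl \H, w_h)$, interpolation errors $(\eps\pa_t(\P\E-\E),w_h)$, and the boundary commutator $\half\la(\Pi_h\mu\inv\vphi - \mu\inv\ga_T\H),\ga_T w_h\ra_\TG$, where I use \eqref{eq: boundary term equalities}. The key point is the commutation relation \eqref{eq: commute}, $\Pi_h\ga_T = \ga_T I_h$, which converts the boundary term into the tangential trace of an interior interpolation error; I would then invoke a trace inequality and the standard interior interpolation estimate of Lemma~\ref{lemma: interpolation error}. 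The consistency error of the centered-flux discrete curl, $\|\curl_h\P\H-\curl\H\|$, is the one genuinely dG-specific ingredient: it is controlled by the elementwise interpolation error together with the jump terms across interior faces, giving an $O(h)$ bound by the arguments underlying Lemma~2.2 of \cite{HochbruckSturm}. Collecting these yields $\|d_h^E(t)\|_{L^2(\Omega)^3}\le Ch$, and $d_h^H$ is entirely symmetric.

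For the boundary defect, I would test the defining identity with $\xi_h,\eta_h\in\Psi_h$ and again subtract the exact coupling relation $B(\pa_t)\vpp = \Half\vect{\mu\inv\ga_T\E}{\ga_T\H}$. This produces two contributions: the interpolation error fed into the Calderon operator,
\[
B(\pa_t)\vect{(I-\Pi_h)\vphi}{(I-\Pi_h)\psi},
\]
and the interpolation error in the right-hand trace terms $\half\la\xi_h,\mu\inv\ga_T(\P-I)\E\ra_\TG$, the latter again handled by \eqref{eq: commute} and a trace estimate. The first contribution is exactly what Lemma~\ref{lemma: interpolation error - Calderon} is designed to bound after taking two time derivatives; since that lemma already delivers the $O(h)$ estimate on $\bigl(\int_0^t\|\cdot\|^2\bigr)^{1/2}$, I would differentiate the defect identity twice in time (legitimate because $\Pi_h$ is time-independent and commutes with $\pa_t$) and apply it directly.

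The main obstacle I anticipate is the correct handling of the dG consistency term $\curl_h\P\H-\curl\H$ and the associated interior-face jump contributions, since this is where the Maxwell analysis genuinely departs from the functional-analytic translation of \cite{leapfrog} and requires the broken-Sobolev machinery and the discrete Green formula \eqref{eq: Green-h}. A secondary technical care-point is ensuring that the vanishing initial conditions $\vphi(\cdot,0)=\pa_t\vphi(\cdot,0)=0$ (inherited by $(I-\Pi_h)\vphi$) are preserved so that the convolution and Plancherel argument in Lemma~\ref{lemma: interpolation error - Calderon} applies to $\pa_t^2 d_h^\vphi$ and $\pa_t^2 d_h^\psi$; the polynomial growth of $C(t)$ then comes from the $|s|^2$ factor in the boundary operator bounds of Lemma~\ref{lemma: int operator s bounds} via the Plancherel estimate.
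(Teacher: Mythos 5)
Your handling of the interior defects contains a genuine gap. In your decomposition the boundary contribution does not cancel: you are left with $\half\la \Pi_h\mu\inv\vphi-\mu\inv\ga_T\H,\ga_T w_h\ra_\TG$, which by \eqref{eq: commute} equals $\half\mu\inv\la \ga_T(\P\H-\H),\ga_T w_h\ra_\TG$, and you propose to bound it by ``a trace inequality and Lemma~\ref{lemma: interpolation error}''. That step fails: to conclude $\|d_h^E\|_{L^2(\Om)^3}\le Ch$ you must bound this term by $Ch\,\|w_h\|_{L^2(\Om)^3}$ for \emph{arbitrary} $w_h\in V_h$, but no trace inequality controls $\ga_T w_h$ by the $L^2(\Om)$ norm of $w_h$ (Lemma~\ref{lemma: trace ineq} applies to $H(\curl,\Om)$ functions, and $V_h\not\subset H(\curl,\Om)$; in any case traces are never $L^2$-bounded). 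Your route could be salvaged, but only with tools you do not invoke: the discrete inverse trace inequality $\|\ga_T w_h\|_{L^2(\Ga)}\le Ch^{-1/2}\|w_h\|_{L^2(\Om)^3}$ combined with the sharper elementwise bound $\|\ga_T(\P\H-\H)\|_{L^2(\Ga)}\le Ch^{3/2}|\H|_{H^2(\Om)^3}$, whose product recovers $O(h)$. The paper avoids all of this: applying the discrete Green formula \eqref{eq: Green-h} pairs the two \emph{discrete} quantities, giving the boundary term $\half\la \mu\inv\Pi_h\vphi-\ga_T\P\H,\ga_T w_h\ra_\TG$, and since $\mu\inv\vphi=\ga_T\H$, relation \eqref{eq: commute} gives $\mu\inv\Pi_h\vphi=\Pi_h\ga_T\H=\ga_T\P\H$, so the term vanishes \emph{identically}; this exact cancellation is precisely why the paper works with nodal interpolation rather than orthogonal projection. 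Relatedly, your anticipated difficulty with the ``dG-specific'' term $\curl_h\P\H-\curl\H$ is vacuous: $\P\H$ is a continuous piecewise linear function, so it has no interior-face jumps, $(\curl_h\P\H,w_h)=(\curl\P\H,w_h)$ exactly, and the term reduces to $\curl(\P\H-\H)$, which Lemma~\ref{lemma: interpolation error} bounds by $Ch$ with no broken-Sobolev machinery.

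Your treatment of the boundary defects, by contrast, is essentially the paper's proof: subtract the exact coupling relation so that the defect functional becomes $B(\pa_t)$ applied to $\bigl((I-\Pi_h)\vphi,(I-\Pi_h)\psi\bigr)$ minus the trace terms $\half\ga_T(\P\E-\E)$, $\half\ga_T(\P\H-\H)$ (the latter legitimately handled by Lemma~\ref{lemma: trace ineq} and Lemma~\ref{lemma: interpolation error}, since $\P\E-\E\in H(\curl,\Om)$), then differentiate twice in time, use that $\Pi_h$ commutes with $\pa_t$, and apply Lemma~\ref{lemma: interpolation error - Calderon}. The one step you omit is the passage from this $\Hdiv$-valued functional to the actual defects $d_h^\psi,d_h^\vphi\in\Psi_h$, which are defined as Riesz representers with respect to $(\cdot,\cdot)_\Ga$; the paper closes this with the self-duality of $\Hdiv$ under the anti-symmetric pairing (Lemma~\ref{lemma: Hdiv-dual}), but this is a minor omission compared with the interior-defect issue above.
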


\begin{proof}
	We begin with the defect $d_h^E$. We have for $w_h \in V_h$,
	\begin{align*}
	(d_h^E,w_h)
	=&\ (\eps \pa_t\P \E,w_h) - (\curl_h \P \H,w_h) -(J,w_h)
	\\
	&+ \half \la \mu\inv \Pi_h \vphi- \ga_T \P H,\ga_T w_h\ra_\TG,
	\end{align*}
	where we used the discrete Green's formula \eqref{eq: Green-h}. Since $\vphi=\mu\ga_T\H$, the boundary term vanishes by the relation \eqref{eq: commute}.
	We further note that
	$\pa_t\P \E=\P\pa_t \E$ and
	$$
	(\curl_h \P \H,w_h) = (\curl \P \H, w_h),
	$$
	because $\P \H$ is a continuous function and so has no jumps on inner faces. The exact solution satisfies  Maxwell's equation and hence
	\begin{align*}
	0 =&\ (\eps \pa_t\P \E,w_h) -  (\curl \P \H,w_h)  -(J,w_h).
	\end{align*}
	Subtracting the two equations therefore yields
	\begin{align*}
	(d_h^E,w_h) =&\ \eps\bigl( \P \pa_t\E-\pa_t \E,w_h\bigr) - \bigl(\curl(\P\H - \H), w_h\bigr).
	\end{align*}
	With the interpolation error bounds of Lemma~\ref{lemma: interpolation error} the right-hand terms are estimated as $O(h)$ times the $L^2(\Omega)$ norm of $w_h$.
	%
	%
	We thus conclude that
	$$
	\| d_h^E \|_{L^2(\Omega)} \le Ch.
	$$
	Similarly we estimate the defect $d_h^H$ for the magnetic equation.
	
	For the boundary   defects $d_h^\psi,d_h^\vphi\in\Psi_h$ we have for all $\xi_h,\eta_h\in \Psi_h$, using the boundary equation,
	$$
	\biggl( \vect{\xi_h}{\eta_h},  \vect{d_h^\psi}{d_h^\vphi} \biggr)_\TG = \biggl\la \vect{\xi_h}{\eta_h},  \vect{\widetilde d_h^\psi}{\widetilde d_h^\vphi} \biggr\ra_\TG
	$$
	where $\widetilde d_h^\psi, \widetilde d_h^\vphi \in \Hdiv$ are given by
	\begin{align*}
	&\vect{\widetilde d_h^\psi}{\widetilde d_h^\vphi} = B(\pa_t) \vect{\Pi_h  \vphi-\vphi}{\Pi_h\psi -   \psi}
	- \half \vect{\ga_T (\P\E -  \E)}{\ga_T (\P\H - \H)} ,
	\end{align*}
	which is bounded by $O(h)$ in the $L^2(0,T;\Hdiv)$ norm by Lemmas~\ref{lemma: interpolation error - Calderon} and \ref{lemma: interpolation error}.
	It then follows that also the defects $d_h^\psi,d_h^\vphi\in\Psi_h$, which are interpolated by $\Psi_h$, are bounded in the same way, using Lemma~\ref{lemma: Hdiv-dual}:
	\begin{align*}
	&\| d_h^\psi \|_{\Hdiv}^2 + \| d_h^\vphi \|_{\Hdiv}^2 \le C \bigl( \| \widetilde d_h^\psi \|_{\Hdiv}^2+ \| \widetilde d_h^\vphi \|_{\Hdiv}^2\bigr).
	\end{align*}
	If we differentiate twice with respect to time before estimating and commute interpolations and time derivatives, this yields the stated bound for the boundary defects.
	\QED\end{proof}

\subsection{Error bound}

\begin{theorem}
	\label{theorem: semidiscrete error bound}
	Assume that the initial data $\E(\cdot,0)$ and $\H(\cdot,0)$ have their support in $\Om$.
	Let the initial values of the semidiscrete problem be chosen as the interpolations of the initial values: $\E_h(\cdot,0)=\P \E(\cdot,0)$ and $\H_h(\cdot,0)=\P \H(\cdot,0)$. If  the solution of Maxwell's equations \eqref{eq: first order Maxwell} is sufficiently smooth, then the error of the dG--BEM semidiscretization \eqref{eq: semidiscrete problem} satisfies, for $t>0$, the first-order error bound
	\begin{align*}
	&\ \eps\|\E_h(\cdot,t) - \E(\cdot,t)\|_{L^2(\Omega)^3}^2 + \mu\|\H_h(\cdot,t) - \H(\cdot,t)\|_{L^2(\Omega)^3}^2 \\
	&\ + \int_0^t \big(\|\vphi_h(\cdot,\tau) - \vphi(\cdot,\tau)\|_{\Hdiv}^2 + \|\psi_h(\cdot,\tau) - \psi(\cdot,\tau)\|_{\Hdiv}^2\big) \d \tau \leq C(t)h^2,
	\end{align*}
	where the constant $C(t)$  grows at most polynomially in $t$.
\end{theorem}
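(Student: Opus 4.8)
The plan is the classical energy argument adapted to this coupled setting: split the error into an interpolation part and a discrete part, show the discrete part solves the stability system of Section~\ref{section: semidiscrete results} with the consistency defects as inhomogeneities, and then synthesize the stability, defect, and interpolation estimates that are already available.

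\textbf{Error splitting.} First I would introduce the discrete error components
\begin{equation*}
e_h^E = \E_h - \P\E, \qquad e_h^H = \H_h - \P\H, \qquad e_h^\vphi = \vphi_h - \Pi_h\vphi, \qquad e_h^\psi = \psi_h - \Pi_h\psi,
\end{equation*}
and write $\E_h - \E = e_h^E + (\P\E - \E)$, and analogously for $\H$, $\vphi$, $\psi$. The interpolation remainders are already controlled: $\|\P\E - \E\|_{L^2(\Om)^3}$ and $\|\P\H - \H\|_{L^2(\Om)^3}$ are $O(h^2)$ by Lemma~\ref{lemma: interpolation error}, while $\|(I-\Pi_h)\vphi\|_{\Hdiv}$ and $\|(I-\Pi_h)\psi\|_{\Hdiv}$ are $O(h)$ by Lemma~\ref{lemma: interpolation error - boundary} (the $H^{1/2}(\Ga)^3$ norm dominating the $\Hdiv$ norm, as in Lemma~\ref{lemma: interpolation error - Calderon}). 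Hence it remains to bound the discrete error, and each contribution to the squared quantities in the theorem is at worst $O(h^2)$.

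\textbf{Error equations.} The semidiscrete solution satisfies \eqref{eq: semidiscrete problem} exactly, whereas inserting the interpolated exact solution $(\P\E,\P\H,\Pi_h\vphi,\Pi_h\psi)$ into the same equations produces precisely the defects $d_h^E,d_h^H,d_h^\vphi,d_h^\psi$ of Section~\ref{subsection: consistency} as residuals. Subtracting the two equations, and noting that the physical source $(\J,\cdot)$ cancels, shows that $(e_h^E,e_h^H,e_h^\vphi,e_h^\psi)$ solves the perturbed system \eqref{eq:dg-bem-stab} with inhomogeneities $j_h=-d_h^E$, $g_h=-d_h^H$, $\rho_h=-d_h^\psi$, $\sigma_h=-d_h^\vphi$. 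The choice $\E_h(\cdot,0)=\P\E(\cdot,0)$ and $\H_h(\cdot,0)=\P\H(\cdot,0)$ forces $e_h^E(\cdot,0)=e_h^H(\cdot,0)=0$, so the discrete energy vanishes initially, $\calE_h(0)=0$.

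\textbf{Assembly.} With $\calE_h(0)=0$, Lemma~\ref{lemma: semidiscr stability - energy estimate} bounds the $L^2(\Om)^3$ norms of $e_h^E,e_h^H$ by the space--time norms of the defects, and Lemma~\ref{lemma: semidiscr stability - boundary functions} bounds $\int_0^t(\|e_h^\vphi\|_{\Hdiv}^2+\|e_h^\psi\|_{\Hdiv}^2)$ by the same quantities. Feeding in the defect estimates of Lemma~\ref{lemma: defect bounds} — which give $\|d_h^E\|_{L^2},\|d_h^H\|_{L^2}=O(h)$ and $\big(\int_0^t(\|\pa_t^2 d_h^\psi\|_{\Hdiv}^2+\|\pa_t^2 d_h^\vphi\|_{\Hdiv}^2)\big)^{1/2}=O(h)$, with constants growing only polynomially in $t$ (and extending to the time derivatives of the interior defects by differentiating the smooth exact equation) — yields $O(h)$ bounds for each discrete-error component. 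A final triangle inequality combining the discrete error with the interpolation remainders of the first step produces the asserted bound $C(t)h^2$ with $C(t)$ polynomial in $t$.

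\textbf{Main obstacle.} Since the stability estimates are inherited verbatim from \cite{leapfrog} and the genuinely Maxwell-specific work already sits in the consistency estimate (Lemma~\ref{lemma: defect bounds}), the remaining difficulty for the theorem is a matching of hypotheses. The stability lemmas require the defects to satisfy vanishing initial conditions. The boundary defects $d_h^\psi,d_h^\vphi$ and their first time derivatives do vanish near $t=0$: because the initial data are supported inside $\Om$, finite speed of propagation keeps $\vphi,\psi$ quiet initially, and the commuting relation \eqref{eq: commute} removes the boundary term from the interior defects; thus the conditions needed by Lemma~\ref{lemma: semidiscr stability - energy estimate} (which constrains only $\rho_h,\sigma_h$) hold and the field error is controlled directly. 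The delicate point is that the interior defects $d_h^E,d_h^H$ do \emph{not} vanish at $t=0$ in general, so the hypothesis $j_h(\cdot,0)=g_h(\cdot,0)=0$ of Lemma~\ref{lemma: semidiscr stability - boundary functions} is not literally met; I expect this to be benign, since $d_h^E(\cdot,0),d_h^H(\cdot,0)=O(h)$ contribute, after integration by parts in time, only an $O(h^2)$ term at the first-order level, or can be removed by subtracting a smooth correction as in \cite{leapfrog}.
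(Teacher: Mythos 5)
Your proposal is correct and follows essentially the same route as the paper's proof: insert the interpolated exact solution into the semidiscrete formulation, regard the resulting defects as the inhomogeneities in the perturbed system \eqref{eq: residuals - matrix formulation}, apply the stability estimates of Lemmas~\ref{lemma: semidiscr stability - energy estimate} and~\ref{lemma: semidiscr stability - boundary functions} together with the defect bounds of Lemma~\ref{lemma: defect bounds}, and finish with the interpolation estimates and the triangle inequality. The ``main obstacle'' you flag---that $d_h^E(\cdot,0)$ and $d_h^H(\cdot,0)$ need not vanish, so the hypothesis $j_h(\cdot,0)=g_h(\cdot,0)=0$ of Lemma~\ref{lemma: semidiscr stability - boundary functions} is not literally met---is a genuine point of care, but the paper's own (very terse) proof passes over it in silence and applies the lemmas directly, so your explicit acknowledgement and proposed remedies make your argument, if anything, more scrupulous than the original.
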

\begin{proof}
	
	We insert the interpolated solution $(\P \E, \P \H, \Pi_h \vphi, \Pi_h \psi)$ into the semi\-discrete variational formulation and apply the stability lemmas, Lemmas~\ref{lemma: semidiscr stability - energy estimate} and \ref{lemma: semidiscr stability - boundary functions}, to the error equations that have the defects in the role of the inhomogeneities. We then use the defect bounds of Lemma~\ref{lemma: defect bounds} to arrive at a first-order error bound for $(\E_h-\P \E, \H_h-\P \H, \vphi_h-\Pi_h \vphi, \psi_h-\Pi_h \psi)$.
	The interpolation error estimates of Lemma~\ref{lemma: interpolation error} and \ref{lemma: interpolation error - Calderon} together with the triangle inequality then complete the proof.
	\QED\end{proof}

\section{Stability results and error bounds for the full discretization}
\label{section: fully discrete results}

Similarly to the semidiscrete case, the stability analysis of the full discretization only depends on the formulation of the fully discrete problem \eqref{eq: full discr - a} and \eqref{eq: full discr - b}, which  again coincides with the acoustic case in form and relevant properties. Hence, the analysis of the full discretization can be carried over directly from \cite[Section 8]{leapfrog}. The original results are again translated into the current functional analytic setting.

\subsection{Stability}

We show stability results under the CFL condition
\begin{equation}\label{eq: CFL}
\dt \|\bfM^{-1/2}\bfD\bfM^{-1/2}\|_2 \leq \sqrt{\eps\mu}.
\end{equation}

The fully discrete electric and magnetic field satisfies the inequality below.
\begin{lemma}
	\label{lemma: fully discr stability - interior}
	Under the CFL condition \eqref{eq: CFL}  and for a stabilization parameter $\alpha\geq1$,  the discrete energy
	\begin{equation*}
	\calE_h^n = \frac \eps 2\|\E_h^n\|_{L^2(\Omega)^3}^2 + \frac{\mu}{4} \Big( \|\H_h^{n+1/2}\|_{L^2(\Omega)^3}^2 + \|\H_h^{n-1/2}\|_{L^2(\Omega)^3}^2 \Big)
	\end{equation*}
	is bounded, at $t=n\dt$, by
	\begin{align*}
	&    \calE_h^n \leq \ C \bigg( \calE_h^0 + \frac{t}{2} \dt \sum_{k=0}^n \Big(\|j_h^{k+1/2}\|_{L^2(\Omega)^3}^2 + \|g_h^{k}\|_{L^2(\Omega)^3}^2 \Big) \\
	&\ + \max\{t^2,t^6\}\dt \sum_{k=0}^n \Big(\|(\pa_t^{\dt})^2 \rho_h^{k+1/2}\|_{\Hdiv}^2 + \|(\pa_t^{\dt})^2 \sigma_h^{k+1/2}\|_{\Hdiv}^2 \Big) \bigg),
	\end{align*}
	where $C>0$ is independent of $h$, $\dt$ and $n$.
\end{lemma}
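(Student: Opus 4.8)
The plan is to transcribe the stability argument of \cite[Lemma~8.1]{leapfrog}, using that the matrix--vector equations \eqref{eq: residuals - matrix formulation}, \eqref{eq: full discr - a}, \eqref{eq: full discr - b} coincide in form and in their relevant algebraic properties with the acoustic case; only the norms and the coercivity constant $c_T=c\,\m{T\inv}$ from Lemma~\ref{lemma: time-discrete coercivity} change. It is convenient to work first with the product-form energy $\widetilde\calE_h^n = \tfrac\eps2 (\bfE^n)^*\bfM\bfE^n + \tfrac\mu2 (\bfH^{n+1/2})^*\bfM\bfH^{n-1/2}$, and to write $\|\bfx\|_\bfM^2 = \bfx^*\bfM\bfx$. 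Testing the electric update in \eqref{eq: full discr - a} with the time average $\bar\bfE^{n+1/2}$, and forming $\widetilde\calE_h^{n+1}-\widetilde\calE_h^n$ with the combined magnetic step $\mu\bfM(\bfH^{n+1/2}-\bfH^{n-1/2}) = \dt\bfD\bfE^n - \dt\bfC_1\bfpsi^n$ at two consecutive levels, the symmetry of $\bfD$ makes the interior coupling terms $(\bar\bfE^{n+1/2})^*\bfD\bfH^{n+1/2}$ cancel, so I expect the clean discrete energy identity
\[
\widetilde\calE_h^{n+1} - \widetilde\calE_h^n = -\dt\,(\bar\bfE^{n+1/2})^*\bfC_0\bfphi^{n+1/2} - \dt\,(\bfH^{n+1/2})^*\bfC_1\bar\bfpsi^{n+1/2} + (\text{source terms in }\bfj,\bfg).
\]
The crucial feature is that the magnetic boundary term involves the time average $\bar\bfpsi^{n+1/2}$, matching exactly the unknown in the convolution-quadrature coupling \eqref{eq: full discr - b}.

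The second step is to eliminate the two boundary terms using \eqref{eq: full discr - b}. Testing that equation with $\vect{\bfphi}{\bar\bfpsi}^{\,n+1/2}$ and using $\bfC_0=\mu\inv\bfC_1$, the boundary terms above are precisely $-\dt$ times $\vect{\bfphi}{\bar\bfpsi}^{*,n+1/2}\big[\bfB(\pa_t^\dt)\vect{\bfphi}{\bar\bfpsi}\big]^{n+1/2}$, together with the stabilization contribution $-\alpha\dt^3\mu\inv(\bar\bfpsi^{n+1/2})^*\bfC_1^T\bfM\inv\bfC_1\dot\bfpsi^{n+1/2}$ and the boundary sources $\bfM_\Ga\bfrho,\bfM_\Ga\bfsigma$. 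Summing over $n$ against the weights $\rho^{2n}$ (equivalently $e^{-2t_n/T}$), the CQ coercivity of Lemma~\ref{lemma: CQ coercivity}, in the matrix form of Lemma~\ref{lemma: time-discrete coercivity}, bounds the $\bfB(\pa_t^\dt)$ sum below by a nonnegative multiple of the discrete primitives of $\bfphi$ and $\bfpsi$; this both yields the boundary-function estimates and supplies the dissipation that closes the interior bound, while summation by parts in time against the sources accounts for the $(\pa_t^\dt)^2\rho_h,(\pa_t^\dt)^2\sigma_h$ factors in the stated bound. The stabilization contribution telescopes, since $(\bar\bfpsi^{n+1/2})^*\bfC_1^T\bfM\inv\bfC_1\dot\bfpsi^{n+1/2} = \tfrac1{2\dt}\big(\|\bfM^{-1/2}\bfC_1\bfpsi^{n+1}\|^2 - \|\bfM^{-1/2}\bfC_1\bfpsi^{n}\|^2\big)$, leaving $-\tfrac{\alpha\dt^2}{2\mu}\|\bfM^{-1/2}\bfC_1\bfpsi^N\|^2$ up to vanishing initial data.

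The third step passes from $\widetilde\calE_h^n$ to the manifestly nonnegative energy $\calE_h^n$ of the statement. A short computation gives $\calE_h^n = \widetilde\calE_h^n + \tfrac{\dt^2}{4\mu}\|\bfM^{-1/2}(\bfD\bfE^n - \bfC_1\bfpsi^n)\|^2$ from the combined magnetic step. The CFL condition \eqref{eq: CFL} controls the $\bfD\bfE^n$ part by $\tfrac{\dt^2}{4\mu}\|\bfM^{-1/2}\bfD\bfE^n\|^2 \le \tfrac\eps4\|\bfE^n\|_\bfM^2$, that is, by half the electric energy, whereas the $\bfC_1\bfpsi^n$ part is exactly the quantity dominated by the accumulated stabilization term from the second step once $\alpha\ge1$ (the cross term being handled by Young's inequality). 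This is what makes $\widetilde\calE_h^n$ bounded below and equivalent to $\calE_h^n$. Finally, collecting the identities over $n$, moving the nonnegative coercivity and stabilization contributions to the favourable side, estimating the interior sources by Young's inequality, and invoking a discrete Gronwall inequality yields the asserted bound; the factors $t$ and $\max\{t^2,t^6\}$ enter through the Gronwall constant and through the inverse coercivity constant $c_T\inv \sim \max\{1,(\eps\mu)\inv T^2\}\,T$ of Lemma~\ref{lemma: time-discrete coercivity}, whose square produces the weight on the boundary defects.

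I expect the main obstacle to be the coupling of the second and third steps: showing that the positive contributions $\tfrac{\dt^2}{4\mu}\|\bfM^{-1/2}\bfC_1\bfpsi^n\|^2$ — which separate the manifestly positive energy $\calE_h^n$ from the exactly telescoping product energy $\widetilde\calE_h^n$ — are genuinely absorbed by the accumulated stabilization term for $\alpha\ge1$, simultaneously with the CFL condition handling the $\bfD\bfE^n$ part. This is the only place where the explicit form of the stabilization term and the precise CFL threshold are essential; without them the purely algebraic, skew-symmetry-driven cancellation would not suffice to guarantee a bounded-below energy.
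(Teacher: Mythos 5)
Your proposal is correct and follows essentially the same route as the paper: the paper's own ``proof'' consists precisely of the observation that the fully discrete system \eqref{eq: full discr - a}--\eqref{eq: full discr - b} has the same matrix--vector form and the same coercivity property (Lemmas~\ref{lemma: coercivity-discrete} and~\ref{lemma: time-discrete coercivity}) as the acoustic case, so that the proof of \cite[Lemma~8.1]{leapfrog} carries over verbatim, with only the norms and constants translated to the Maxwell setting. Your sketch of the internals of that transferred argument (telescoping product energy, elimination of the boundary terms via the CQ equation, CQ coercivity with exponential weights, telescoping of the stabilization term and its absorption of the $\bfC_1\bfpsi^n$ part under $\alpha\geq 1$ together with the CFL bound on the $\bfD\bfE^n$ part, and a discrete Gronwall step) is a faithful rendering of what that cited proof does.
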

Using $\H_h^n=\half(\H_h^{n+1/2} + \H_h^{n-1/2})$, the above result also yields a bound on $\|\H_h^n\|^2$.

For the boundary densities we have the following fully discrete estimate.
\begin{lemma}
	\label{lemma: fully discr stability - boundary}
	Under the CFL condition \eqref{eq: CFL}  and for a stabilization parameter $\alpha\geq1$,
	the discrete boundary functions are bounded, at $t=n\dt$, by
	\begin{align*}
	&\ \sum_{k=0}^n \Big( \|\vphi^{k+1/2}\|_{\Hdiv}^2 + \|\psi^{k+1/2}\|_{\Hdiv}^2 \Big) \\
	&\ \hphantom{\leq}\ \leq C \max\{t^2,t^6\} \sum_{k=0}^{n-1} \Big( \|\pa_t^{\dt} j_h^{k}\|_{L^2(\Omega)^3}^2 + \|\pa_t^{\dt} g_h^{k+1/2}\|_{L^2(\Omega)^3}^2 \\
	&\ \hphantom{\leq}\ + \|(\pa_t^{\dt})^2 \rho_h^{k+1/2}\|_{\Hdiv}^2 + \|(\pa_t^{\dt})^2 \sigma_h^{k+1/2}\|_{\Hdiv}^2 \Big)
	\end{align*}
	where $C>0$ is independent of $h$, $\dt$ and $n$.
\end{lemma}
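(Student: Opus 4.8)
The plan is to exploit the structural identity, emphasized after Lemma~\ref{lemma: coercivity-discrete}, between the fully discrete matrix--vector system \eqref{eq: full discr - a}--\eqref{eq: full discr - b} and the corresponding system for the acoustic wave equation: the discrete Calderon matrix $\bfB(s)$ obeys the same coercivity estimate (Lemma~\ref{lemma: coercivity-discrete}), and the leapfrog--CQ coupling has the same anti-symmetric structure. Consequently the boundary estimate of \cite[Section~8]{leapfrog} transfers, and only the interpretation of the norms changes, with the $\Hdiv$ norm and the anti-symmetric pairing $\la\cdot,\cdot\ra_\TG$ replacing their acoustic counterparts. Below I outline how this transferred argument proceeds.

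First I would pass from the densities to their coercivity-friendly form. Applying the discrete time derivative $\pa_t^\dt$ to the boundary equation \eqref{eq: full discr - b} and testing the result against the differenced densities $\pa_t^\dt\vect{\bfphi}{\bfpsi}$, I can invoke the time-discrete coercivity of $\bfB(\pa_t^\dt)$ from Lemma~\ref{lemma: time-discrete coercivity}. Since $\bfB$ and $\pa_t^\dt$ commute as convolution operators, the factor $(\pa_t^\dt)\inv$ in the coercivity bound cancels the extra difference, so that the left-hand side is bounded below by $\beta c_T$ times the weighted sum $\dt\sum_{k} e^{-2t_k/T}\big(\|\vphi^{k+1/2}\|_{\Hdiv}^2 + \|\psi^{k+1/2}\|_{\Hdiv}^2\big)$ of the densities themselves; over the finite horizon $T=N\dt$ the weight $e^{-2t_k/T}$ is bounded below and may be dropped.

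Next I would estimate the differentiated right-hand side. After differentiation, \eqref{eq: full discr - b} produces the interior couplings $\bfC_0^T\,\pa_t^\dt\bar\bfE$ and $\bfC_1^T\,\pa_t^\dt\bfH$, the first differences $\pa_t^\dt j_h$, $\pa_t^\dt g_h$ of the interior inhomogeneities, and the second differences $(\pa_t^\dt)^2\rho_h$, $(\pa_t^\dt)^2\sigma_h$ of the boundary inhomogeneities, which explains the derivative pattern in the stated bound. Using the leapfrog recursions \eqref{eq: full discr - a} to rewrite $\pa_t^\dt\bfE$ and $\pa_t^\dt\bfH$ through $\bfD^T\bfE$, $\bfD\bfH$, the coupling terms $\bfC_0\bfphi$, $\bfC_1\bfpsi$ and the inhomogeneities, the interior fields are controlled by the already-established energy estimate of Lemma~\ref{lemma: fully discr stability - interior}, which supplies the factor $\max\{t^2,t^6\}$.

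The main obstacle is closing the estimate: the coupling terms regenerate $\bfphi$ and $\bfpsi$ on the right-hand side, multiplied by the $O(\dt^2)$ inverse-mass factors $\bfC_1^T\bfM\inv\bfC_1$ coming from the explicit leapfrog update, and these must be absorbed rather than merely bounded. This is precisely the role of the stabilization term in \eqref{eq: full discr - b} with parameter $\alpha\ge1$, together with the CFL condition \eqref{eq: CFL}: they ensure that the step matrix $\bfB_0+\dt\bfG$ stays coercive (as noted at the end of Section~\ref{section: discretization}, since $\bfB_0+\bfB_0^T$ and $\bfG$ are positive definite) and that the destabilizing $\dt^2$ terms are dominated by the coercivity gain. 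A discrete Gronwall argument then yields the stated bound with a constant independent of $h$, $\dt$ and $n$.
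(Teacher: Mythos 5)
Your proposal matches the paper's own argument: the paper gives no self-contained proof of this lemma but, exactly as in your first paragraph, appeals to the structural identity of the fully discrete system \eqref{eq: full discr - a}--\eqref{eq: full discr - b} with the acoustic case together with the coercivity of Lemmas~\ref{lemma: coercivity-discrete} and~\ref{lemma: time-discrete coercivity}, so that the boundary estimate of \cite[Section~8]{leapfrog} carries over verbatim at the matrix--vector level and is then translated into the $\Hdiv$ setting. Your additional sketch of the internals (differentiating once in discrete time so that the $(\pa_t^{\dt})\inv$ in the coercivity bound yields the densities themselves, controlling the interior terms by Lemma~\ref{lemma: fully discr stability - interior}, with the stabilization term and the CFL condition absorbing the explicit-coupling contributions) is consistent with the argument in \cite{leapfrog} and goes beyond what the paper itself records.
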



\subsection{Error bound}

The following convergence estimate for the full discretization is then shown in the same way as in the proof of Theorem~9.1 of \cite{leapfrog}, using the consistency errors of the spatial discretization given in Section~\ref{subsection: consistency}, using known error bounds of the leapfrog scheme and convolution quadratures, and applying Lemmas~\ref{lemma: fully discr stability - interior} and \ref{lemma: fully discr stability - boundary}.

\begin{theorem}
	\label{theorem: fully discrete error bound}
	Assume that the initial conditions $\E(\cdot,0)$ and $\H(\cdot,0)$, and the inhomogeneity $\J$ have their supports in $\Om$. Let the initial values of the semidiscrete problem be chosen as the interpolations of the initial values: $\E_h(0)=\P \E(\cdot,0)$ and $\H_h(0)=\P \H(\cdot,0)$. If  the solution of Maxwell's equations \eqref{eq: first order Maxwell} is sufficiently smooth, and
	under the CFL condition \eqref{eq: CFL}  and with a stabilization parameter $\alpha\geq1$,
	the error of the dG--BEM and leapfrog--convolution quadrature discretization \eqref{eq: full discr - a} and \eqref{eq: full discr - b} is bounded, at $t=n\dt$, by
	\begin{align*}
	&\ \eps\|\E_h^n -\E(\cdot,t)\|_{L^2(\Omega)^3}^2 + \mu\|\H_h^n - \H(\cdot,t)\|_{L^2(\Omega)^3}^2 \\
	&\ + \dt \sum_{k=0}^{n-1} \big(\|\vphi_h^{k+1/2} - \vphi(\cdot,t_{k+1/2})\|_{\Hdiv}^2  \\
	&\ \hphantom{ \dt \sum_{k=0}^{n-1} } + \|\bar\psi_h^{k+1/2} - \psi(\cdot,t_{k+1/2})\|_{\Hdiv}^2 \bigr)\leq C(t)(h^2+\dt^4) ,
	\end{align*}
	where the constant $C(t)$ grows at most polynomially in $t$.
\end{theorem}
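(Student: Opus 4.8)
The plan is to run the fully discrete analogue of the semidiscrete argument behind Theorem~\ref{theorem: semidiscrete error bound}. I would insert the spatially interpolated exact solution $(\P\E,\P\H,\Pi_h\vphi,\Pi_h\psi)$, sampled on the staggered time grid, into the scheme \eqref{eq: full discr - a}--\eqref{eq: full discr - b} and collect the residuals. This yields a fully discrete system for the errors $(\bfE_h^n-\P\E,\ \bfH_h^{n+1/2}-\P\H,\ \bfphi^{n+1/2}-\Pi_h\vphi,\ \bar\bfpsi^{n+1/2}-\Pi_h\psi)$ of exactly the matrix--vector form \eqref{eq: residuals - matrix formulation} that the stability lemmas assume, with the residuals playing the role of the discrete inhomogeneities $j_h,g_h,\rho_h,\sigma_h$. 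Applying Lemmas~\ref{lemma: fully discr stability - interior} and~\ref{lemma: fully discr stability - boundary} then bounds this error by the discrete norms of the residuals, and the triangle inequality together with the interpolation error estimates of Lemmas~\ref{lemma: interpolation error} and~\ref{lemma: interpolation error - Calderon} recovers the full error, so that the whole argument parallels \cite[Theorem~9.1]{leapfrog}.

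The residuals split additively into a spatial part and a temporal part. The spatial part is exactly the semidiscrete defect analyzed in Lemma~\ref{lemma: defect bounds}, now evaluated on the time grid, hence $O(h)$; its discrete time derivatives are also $O(h)$ since $\Pi_h$ and $\P$ commute with the difference quotient $\pa_t^{\dt}$ under the assumed smoothness. The temporal part has two sources: the leapfrog (St\"{o}rmer--Verlet) discretization of the interior equations \eqref{eq: first order Maxwell}, with consistency error $O(\dt^2)$ by the standard analysis \cite{HairerLubichWanner}, and the convolution-quadrature discretization of the boundary equation, with consistency error $O(\dt^2)$ for the BDF2-based CQ \cite{Lubich-multistep}. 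Because the stability estimates measure the boundary residuals through $(\pa_t^{\dt})^2\rho_h$ and $(\pa_t^{\dt})^2\sigma_h$, I must differentiate in the discrete sense and then estimate; this is legitimate under the smoothness of the exact solution and the vanishing of the residuals at $t=0$, the latter following from choosing interpolated initial data and from the support assumption on the initial fields and on $\J$. Squaring the $O(h)$ and $O(\dt^2)$ contributions and summing over the time grid produces the claimed bound $C(t)(h^2+\dt^4)$, where the $\max\{t^2,t^6\}$ prefactors in the stability lemmas account for the at most polynomial growth of $C(t)$.

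The main obstacle is the bookkeeping forced by the staggered time grid and by the stabilization term. The electric field lives on integer steps, the magnetic field on half steps, and the boundary unknown enters through the average $\bar\bfpsi^{n+1/2}$; consequently the interpolated exact solution must be sampled so that each residual is attached to the correct equation and half step, and so that the $O(\dt^2)$ leapfrog and CQ consistency estimates genuinely apply in the discrete norms of Lemmas~\ref{lemma: fully discr stability - interior} and~\ref{lemma: fully discr stability - boundary}. A related delicate point is the extra term $-\alpha\dt^2\mu\inv\bfC_1^T\bfM\inv\bfC_1\dot\bfpsi^{n+1/2}$ in \eqref{eq: full discr - b}: inserting the exact solution makes it contribute an additional $O(\dt^2)$ residual (it carries the explicit factor $\dt^2$ times a discrete time derivative), and one must verify it does not degrade the order. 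This is precisely the role the term plays in the acoustic stability proof \cite[Lemma~8.1]{leapfrog}, and it transfers unchanged here because the matrix--vector formulation and the coercivity estimate of Lemma~\ref{lemma: coercivity-discrete} coincide with the acoustic case. Once these consistency contributions are collected and bounded, the two stability lemmas close the argument.
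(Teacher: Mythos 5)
Your proposal is correct and follows essentially the same route as the paper: the paper's proof likewise inserts the interpolated exact solution into the fully discrete scheme, combines the spatial consistency bounds of Section~\ref{subsection: consistency} with the known $O(\dt^2)$ consistency of the leapfrog scheme and BDF2-based convolution quadrature, and closes the argument with the stability Lemmas~\ref{lemma: fully discr stability - interior} and~\ref{lemma: fully discr stability - boundary}, exactly as in Theorem~9.1 of \cite{leapfrog}. Your additional remarks on the staggered-grid bookkeeping and on the $O(\dt^2)$ residual generated by the stabilization term are consistent with that argument and fill in details the paper leaves to the cited reference.
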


\section{Conclusion}
We have given a stability and error analysis of semi- and full discretizations of Maxwell's  equations in an interior non-convex domain coupled with time-domain boundary integral equations for transparent boundary conditions. 

A key result for the analysis of this problem is the coercivity estimate of the Calderon operator proved in Lemma~\ref{lemma: coercivity}, which is preserved under trace-space conforming boundary discretizations and translates from the Laplace domain to the continuous-time domain by the operator-valued version of Herglotz' theorem as restated in Lemma~\ref{lemma: Herglotz} and to the convolution quadrature time discretization by Lemma~\ref{lemma: time-discrete coercivity}.

Another important aspect is that the symmetrized weak formulation \eqref{eq: weak formulation}, as first proposed in \cite{abboud2011coupling} for the acoustic wave equation, is preserved under space discretization to yield a finite-dimensional system of the form \eqref{eq: matrix formulation}. In this paper the space discretization is exemplified by a dG discretization in the interior and  continuous boundary elements. Other interior discretizations that are commonly used for Maxwell's equations, such as edge elements, could equally be used as long as they lead to a matrix formulation \eqref{eq: matrix formulation}. Similarly, other trace-space conforming boundary elements such as Raviart--Thomas elements could be used, since they also preserve the coercivity of Lemma~\ref{lemma: coercivity-discrete}.

Once the matrix formulation has the structure  \eqref{eq: matrix formulation} with the coercivity of Lemma~\ref{lemma: coercivity-discrete}, the analysis in \cite{leapfrog} shows stability of the spatial semi-discretization and of the full discretization with a stabilized leapfrog method.

Together with estimates for the consistency error, which we derive in Sections~\ref{subsection: interpolation error bounds} and~\ref{subsection: consistency} in an exemplary way for the particular space discretization considered, we then obtain error bounds for the semi-discretization. Moreover, using known error bounds for the consistency error of the leapfrog method and of the convolution quadrature time discretization on the boundary, we finally obtain error bounds of the full discretization.

We claim no originality on the constituents of the discretization of Maxwell's equation in space and time, in the interior and on the boundary. The novelty of this paper is the stability and error analysis of their coupling. It is remarkable that, in spite of the fundamentally different functional-analytic framework, the stability analysis extends directly from the acoustic to the Maxwell case. 
This becomes possible because we show here that
the coercivity  and the matrix formulation~\eqref{eq: matrix formulation}  of the discretization are of the same type for both Maxwell and the acoustic case. On the other hand, the analysis of the consistency errors depends strongly on the functional-analytic setting and is different for discretizations of Maxwell's equations and the acoustic wave equation.

\begin{acknowledgements} We thank two anonymous referees for their helpful comments.
	We are grateful for the helpful discussions on spatial discretizations with Ralf Hiptmair (ETH Z\"{u}rich) during a BIRS Workshop (16w5071) in Banff.
	This work was supported by the Deutsche Forschungsgemeinschaft (DFG) through SFB 1173.
\end{acknowledgements}



\clearpage

\title{\textbf{\Large Erratum: Stable and convergent fully discrete interior--exterior coupling of Maxwell's equations}}

\titlerunning{Erratum: Interior--exterior coupling for Maxwell's equations}        

\bigskip

\author{\textbf{\large J\"org Nick $\cdot$ 
	Bal\'{a}zs Kov\'{a}cs $\cdot$ 
	\\
	Christian Lubich }
}


\institute{J.~Nick, B.~Kov\'{a}cs, and Ch.~Lubich \at
	Mathematisches Institut, University of T\"{u}bingen,\\
	Auf der Morgenstelle 10, 72076 T\"{u}bingen, Germany \\
	\email{$\{$nick,kovacs,lubich$\}$@na.uni-tuebingen.de}
}

\date{}

\makesecondtitle

\bigskip
\bigskip

\begin{abstract}
	We correct a sign error in the paper \cite{Maxwell} by the second and third authors, noted by the first author. This sign error in the definition of the Calder\'on operator has no effect on the theory presented in  \cite{Maxwell}, but it does affect the implementation of the proposed numerical method.
	
	\keywords{transparent boundary conditions \and boundary integral equations \and Calder\'on operator}
	\subclass{35Q61 \and 65M60 \and 65M38 \and 65M12 \and 65R20}
\end{abstract}

\setcounter{section}{0}

\section{Introduction}

In \cite{Maxwell} we present a time-domain boundary integral formulation of an interior--exterior coupling of Maxwell's equations, with the help of a Calder\'on operator whose coercivity plays a fundamental role in proving the well-posedness of the proposed time-domain boundary integral equations and the stability of the numerical discretization. The definition of the Calder\'on operator contains, however, a sign error, which is corrected here. The effects of this sign error are restricted only to Section~2.3 and formula (3.1) in \cite{Maxwell}, but otherwise all the results of the paper hold unchanged. On the other hand, for the implementation of the method the correct sign is crucial.

\section{The time-harmonic Maxwell's equation and its boundary integral operators}
\label{section:recap helmholtz}
\subsection{Time-harmonic Maxwell's equation and trace operators}

Let us recall the \emph{time-harmonic Maxwell's equation}, obtained as the Laplace transform of the second order Maxwell's equation (with constant permeability $\mu$ and permittivity $\eps$):
\begin{equation}
\label{eq:time-harmonic Maxwell}
\begin{aligned}
\eps\mu s^2u + \curl  \curl u =&\ 0 \qin \R^3 \setminus \Ga ,
\end{aligned}
\end{equation}
where $\Ga$ is the boundary of a bounded piecewise smooth domain (or a finite collection of such domains) $\Om\subset \R^3$, not necessarily convex, with exterior normal $\nu$. The complex parameter $s$ of positive real part is the Laplace transform variable.

In the following we assume appropriate physical units such that 
\begin{equation}\label{epsmu}
\varepsilon\mu=1,
\end{equation}
that is, the wave speed is set to one. 
In the original work~\cite{Maxwell}, the dependence on $\varepsilon\mu$ is unreliable and one should assume \eqref{epsmu}, which just corresponds to a rescaling of time  $t \to t/\sqrt{\eps\mu}\,$ or of frequency $s\to s\sqrt{\eps\mu}$. 

With the scaling \eqref{epsmu}, Eq.~\eqref{eq:time-harmonic Maxwell} becomes the time-harmonic Maxwell's equation $\curl\curl u- \kappa^2 u = 0$  as in \cite{BuffaHiptmair} on setting $s=-i \kappa$.

Analogously to \cite{BuffaHiptmair} the \emph{tangential} and \emph{magnetic} traces are defined by
$$
\gamma_T v = v|_\Ga \times \nu, \andquad \gamma_N v = (s\inv \curl v)|_\Ga \times \nu ,
$$
respectively.
The setting uses the following skew-hermitian pairing on $L^2(\Ga)$:
\begin{equation*}
\la \ga w , \ga v \ra_\Ga = \int_\Ga (\ga \overline w \times \nu) \cdot \ga v \,\d\sigma .
\end{equation*}
The complex conjugation of $w$ was missing in the definition of the pairing in \cite{Maxwell} although it was actually used, e.g. in formula (2.3) and Lemma 3.1 of \cite{Maxwell}.

\subsection{Boundary integral operators}
\label{subsection: boundary integral op}

The functional analytic setting of \cite[Section~2.3]{Maxwell} follows Buffa and Hiptmair \cite{BuffaHiptmair}. The latter paper defines boundary integral operators in the Fourier domain, whereas \cite{Maxwell} uses the Laplace domain (which fits better with convolution quadratures, cf.~\cite[Section~4]{Maxwell}). The sign error occurred while translating the definition of the boundary integral operators and related notions from the Fourier to the Laplace domain. Below we present the correct Laplace domain formulation.

Exactly as in \cite{Maxwell}, following \cite{BuffaHiptmair} and \cite{BallaniBanjaiSauterVeit}, the (electric) \emph{single layer potential} and \emph{double layer potential} for \eqref{eq:time-harmonic Maxwell} are given, for $x \in \R^3\setminus \Ga$, as
\begin{align*}
\calS(s)\vphi(x) = &\ -s \int_\Ga \!\! G(s,x-y) \vphi(y) \d y + s\inv \nb \! \int_\Ga \!\! G(s,x-y) \div_\Ga \vphi(y) \d y , \\
\calD(s)\psi(x) = &\ \curl \int_\Ga \! G(s,x-y) \psi(y) \d y,
\end{align*}
with the fundamental solution $G(s,x) = \tfrac{e^{-s|x|}}{4\pi |x|}$ for $x\in\R^3\setminus\{0\}$ and Re$\,s>0$.

The solution of \eqref{eq:time-harmonic Maxwell} is then given by the correct representation formula:
\begin{equation}
\label{eq:representation formula}
u = - \calS(s)\vphi + \calD(s)\psi,  \qquad x \in \R^3\setminus \Ga .
\end{equation}
In \cite{Maxwell} the first negative sign was erroneously missing.

\noindent The boundary densities in \eqref{eq:representation formula} are given by $\vphi = \jp{\gamma_N u} = \jp{\ga_T (s\inv \curl u)}$ and $\psi = \jp{\gamma_T u}$, where $\jp{\gamma v} = \gamma^- v - \gamma^+ v$ denotes the jump in the boundary traces of the interior domain $\Omega^-$ and the exterior domain $\Omega^+$, while $\av{\gamma v} = \half (\gamma^- v + \gamma^+ v)$ denotes the average. We note  that  there is a sign difference in the jump when comparing \cite{BuffaHiptmair} and \cite{Maxwell}.

Due to the negative sign in the representation formula the correct jump relations are
\begin{align*}
\jp{\gamma_N \circ \calS(s)} =&\ -\hbox{Id}, &  \jp{\gamma_N \circ \calD(s)} =&\ 0,
\\
\jp{\gamma_T \circ \calS(s)} =&\ 0, & \jp{\gamma_T \circ \calD(s)} =&\ \hbox{Id} .
\end{align*}

The boundary integral operators $V$ and $K$ then satisfy the relations
\begin{equation}
\label{eq:boundary integral equations}
\begin{aligned}
V(s) =&\ \av{\gamma_T \circ \calS(s)} = \av{\gamma_N \circ \calD(s)} , \\
K(s) =&\ \av{\gamma_T \circ \calD(s)} = - \av{\gamma_N \circ \calS(s)} . 
\end{aligned}
\end{equation}
In \cite{Maxwell} the negative sign in the last term of the second line was missing. Naturally, this sign  difference does not influence the boundedness of these operators, see \cite[Lemma~2.3]{Maxwell}, based on  \cite[Section~5]{BuffaHiptmair} and \cite{BallaniBanjaiSauterVeit}.

The negative sign in \eqref{eq:boundary integral equations} changes the signs in the expression for the averages of the traces using the operators $V$ and $K$, see \cite[equation~(2.6)]{Maxwell}. The correct relations are:
\begin{equation}
\label{eq:average expressions}
\begin{aligned}
\av{\gamma_T u}
=&\ -\av{ \gamma_T \calS(s)\vphi } + \av{ \gamma_T \calD(s)\psi } \\
=&\ -V(s)\vphi + K(s)\psi , \qquad \andquad \\[2mm]
\av{\gamma_N u}
=&\ -\av{ \gamma_N \calS(s)\vphi } + \av{ \gamma_N \calD(s)\psi } \\
=&\  K(s)\vphi + V(s)\psi .
\end{aligned}
\end{equation}
The negative sign in the first equation was missing in \cite[equation~(2.6)]{Maxwell}.

\section{Coercivity of a Calder\'on operator for the time-harmonic Maxwell's equation}
\label{section:Calderon}
Due to the above formulas, the correct \emph{Calder\'on operator} is given by
\begin{equation}
\label{def-B}
B(s)= \C
\left(
\begin{array}{cc}
- V(s)  &  K(s) \\
-K(s) & -V(s) \\
\end{array}
\right) ,
\end{equation}
with a correct negative sign in the left upper block of $B(s)$ as opposed to \cite[equation~(3.1)]{Maxwell}.

Within the above setting the first equality in the proof of Lemma~3.1 in \cite{Maxwell} stays true: 
For given $\vphi, \psi \in \Hdiv$, we define $u\in H(\curl,\R^3\setminus\Gamma)$ by the representation formula \eqref{eq:representation formula}. We can then express $\vphi$ and $\psi$, see above, by $\vphi = \jp{\gamma_N u} = \jp{\ga_T (s\inv \curl u)}$ and $\psi = \jp{\gamma_T u}$. Then, \eqref{eq:average expressions} and \eqref{def-B} yield
\begin{equation}
B(s) \vect{\vphi}{\psi} = \C \vect{\av{\gamma_T u}}{- \av{\gamma_N u}} .
\end{equation}
\begin{remark}When the scaling \eqref{epsmu} is not imposed, then the corresponding equation is obtained by replacing the argument $s$ with $s\sqrt{\eps\mu}$ in $B(s)$ and in $\gamma_N u=\ga_T (s\inv \curl u)$. We note, however, that with this substitution, the single- and double-layer operators are then scaled differently from those defined in \cite{BallaniBanjaiSauterVeit,Maxwell}.
\end{remark}

It is of crucial importance that in the above setting the Calder\'on operator still satisfies the following coercivity result, with the proof given as in \cite{Maxwell}.
\begin{lemma}[{\cite[Lemma~3.1]{Maxwell}}]
	\label{lemma: coercivity}
	There exists  $\beta>0$ such that the Calder\'on operator \eqref{def-B} satisfies
	\begin{align*}
	\re \biggl\la \vect{\vphi}{\psi}, B(s) \vect{\vphi}{\psi} \biggr\ra_\TG
	\geq \beta \,\m{s}  \Big( \|s\inv\vphi\|_{\Hdiv}^2 + \|s\inv\psi\|_{\Hdiv}^2 \Big)
	\end{align*}
	for $\re s>0$ and for all $\vphi, \psi \in \Hdiv$, with
	$
	\m{s} = \min\{ 1, |s|^{2} \} \re s .
	$
\end{lemma}

Thanks to this coercivity estimate for the Calder\'on operator $B$ defined above in \eqref{def-B}, all the stability and convergence results of \cite{Maxwell} remain valid, since the proofs depend on this coercivity result and not on the particular form of the Calder\'on operator.

\begin{acknowledgements} 
	We thank Jan Bohn for his remark regarding the physical constants.
	The authors are supported by the Deutsche Forschungsgemeinschaft (DFG, German Research Foundation) -- Project-ID 258734477 -- SFB 1173. The work of Bal\'azs Kov\'acs is also supported by the Heisenberg Programme of the Deutsche Forschungsgemeinschaft -- Project-ID 446431602.
\end{acknowledgements}

\end{document}